\newtheorem{teorema}{Theorem}[section]
\newtheorem{theorem}{Theorem}
\newtheorem{corollary}[teorema]{Corollary}
\newtheorem{lemma}[teorema]{Lemma}
\newtheorem{proposition}[teorema]{Proposition}
\theoremstyle{definition}
\newtheorem{definition}[teorema]{Definition}
\newtheorem{remark}[teorema]{Remark}
\newtheorem{example}{Example}[section]
\newtheorem*{theorem*}{Theorem}
\setlist{noitemsep,topsep=0pt,partopsep=0pt}
\newcommand{\ZZ}{\ensuremath{{\mathbb Z}}}
\newcommand{\CC}{\ensuremath{{\mathbb C}}}
\newcommand{\CW}{\ensuremath{{\widehat{\mathbb C}}}}
\newcommand{\RR}{\ensuremath{{\mathbb R}}}
\newcommand{\NN}{\ensuremath{{\mathbb N}}}
\newcommand{\HH}{\ensuremath{{\mathbb H}}}
\newcommand{\E}{\ensuremath{{\mathscr E}}}
\newcommand{\R}{\ensuremath{{\mathcal R}}}
\font\myfont=cmr10 at 12pt
\newcommand{\e}{{\text{\myfont e}}}
\newcommand{\abs}[1]{\left\lvert#1\right\rvert}
\renewcommand{\Re}[1]{{\mathfrak{Re}\left(#1\right)}}
\renewcommand{\Im}[1]{{\mathfrak{Im}\left(#1\right)}}
\newcommand{\del}[2]{\frac{\partial #1}{\partial #2}}
\newcommand{\ddel}[2]{\dfrac{\partial #1}{\partial #2}}
\newcommand{\circled}[1]{ \text{\tikz[baseline=(char.base)]{
    \node[shape=circle,draw,inner sep=1pt] (char) {$#1$};}} }
\newcommand{\raiz}[1]{ \text{\tikz[baseline=(char.base)]{
    \node[shape=circle,draw,inner sep=0.001pt] (char) {
    \text{\tikz[baseline=(char.base)]{
    \node[shape=circle,draw,inner sep=1pt] (char) {$#1$};}}
    };}} }
\newcommand*\bigcdot{\mathpalette\bigcdot@{.5}}
\newcommand*\bigcdot@[2]{\mathbin{\vcenter{\hbox{\scalebox{#2}{$\m@th#1\bullet$}}}}}
\newlength\replength
\newcommand\repfrac{.33}
\newcommand\rulewidth{.6pt}
\newcommand\tdashfill[1][\repfrac]{\cleaders\hbox to \replength{%
  \smash{\rule[\arraystretch\ht\strutbox]{\repfrac\replength}{\rulewidth}}}\hfill}
\newcommand\tabdashline{%
  \makebox[0pt][r]{\makebox[\tabcolsep]{\tdashfill\hfil}}\tdashfill\hfil%
  \makebox[0pt][l]{\makebox[\tabcolsep]{\tdashfill\hfil}}%
  \\[-\arraystretch\dimexpr\ht\strutbox+\dp\strutbox\relax]%
}
\newcommand\tdotfill[1][\repfrac]{\cleaders\hbox to \replength{%
  \smash{\raisebox{\arraystretch\dimexpr\ht\strutbox-.1ex\relax}{.}}}\hfill}
\title[Geometry of transcendental singularities]
\author[Alvaro Alvarez--Parrilla and 
Jes\'us Muci\~no--Raymundo]{}
\subjclass[2020]{Primary: 32S65; Secondary:  
30D30, 
34M05.}
 \keywords{Complex analytic vector fields 
 \and Riemann surfaces 
 \and essential singularities
 \and transcendental singularities.}
 \email{alvaro.uabc@gmail.com}
 \email{muciray@matmor.unam.mx}
\begin{document}
\maketitle

% Enter the first author's name and address:
\centerline{\scshape Alvaro Alvarez--Parrilla
%$^*$
}
\medskip
{\footnotesize
% please put the address of the first author
 \centerline{Grupo Alximia SA de CV}
   \centerline{Ensenada, Baja California, CP 22800, M\'exico}
} % Do not forget to end the {\footnotesize by the sign }

\medskip

\centerline{\scshape Jes\'us Muci\~no--Raymundo}
\medskip
{\footnotesize
 % please put the address of the second  and third author
\centerline{Centro de Ciencias Matem\'aticas}
\centerline{Universidad Nacional Aut\'onoma de 
M\'exico, Morelia, M\'exico} 
%   \centerline{Morelia, Michoac\'an, M\'exico}
}

\bigskip
\medskip

\centerline{
\dedicatory{
\it To Professor Alberto Verjovsky Sol\'a}
}

%The abstract of your paper
\begin{abstract}{
On Riemann surfaces $M$, there exists a canonical correspondence between a possibly
multivalued function $\Psi_X$ whose differential is single valued (\emph{i.e.} an additively automorphic singular complex
analytic function) and a vector field $X$.
From the point of view of vector fields, 
the singularities that we consider are zeros, poles, 
isolated essential singularities and 
accumulation points of the above. 
The theory of singularities of the inverse function 
$\Psi_X^{-1}$ is extended
from meromorphic functions to additively automorphic singular complex analytic functions. 
The main contribution is a complete characterization of when a singularity of $\Psi_X^{-1}$ 
is either algebraic, logarithmic or arises from a zero with nonzero residue of $X$. 
Relationships between analytical properties of $\Psi_X$, singularities of $\Psi_X^{-1}$ 
and singularities of $X$ are presented.  
Families and sporadic examples showing the geometrical richness of vector fields on the neighbourhoods of 
the singularities of $\Psi_X^{-1}$ are studied. 
As applications we have; 
a description of the maximal univalence regions for complex trajectory solutions of a vector field $X$, 
a geometric characterization of the incomplete real trajectories of a vector field $X$, and  
a description of the singularities of the vector field associated to the Riemann $\xi$ function.
}
\end{abstract}

\setcounter{tocdepth}{2} %  Aqui podemos cambiar el nivel del contenido usualmente 1 pero 2 para revision
\tableofcontents
%
%%%%%%%%%%%%%%%%%%%%%%%%%%%%%%%
%    Los teoremas que se citan en la introduccion
%%%%%%%%%%%%%%%%%%%%%%%%%%%%%%%
\theoremstyle{plain}% default
\newtheorem*{theo1}{Theorem \ref{singularidades-algebraicas-y-logaritmicas}}
%%%% 1

\theoremstyle{plain}% default
\newtheorem*{theo2}{Theorem \ref{teo-logaritmicas-separada}}
%%%% 2

\theoremstyle{plain}% default
\newtheorem*{theo3}{Theorem \ref{puntos-ideales-en-terminos-de-X}}
%%%% 3

\theoremstyle{plain}% default
\newtheorem*{theo4}{Theorem \ref{familias-r-d}}
%%%% 4

\theoremstyle{plain}% default
\newtheorem*{theo5}{Theorem \ref{familias-P-r}}
%%%% 5

\theoremstyle{plain}% default
\newtheorem*{theo6}{Theorem \ref {teo:cubierta-universal-RX}}
%%%% 6

\theoremstyle{plain}% default
\newtheorem*{theo7}{Theorem \ref{teo:tray-asintoticas-e-incompletas}}
%%%% 7

\theoremstyle{plain}% default
\newtheorem*{theo8}{Theorem \ref{Teo:ValAsintotico-TrayIncompleta}}
%%%% 

\section{Introduction}

Essential singularities of meromorphic functions
$\Psi$ on $\CC$ are a natural source of 
intricate/complex
behaviour in analysis,
iteration of functions and 
differential equations, 
among others topics.
From a geometrical point of view, 
in 1914
F. Iversen \cite{Iversen} introduced the 
\emph{ideal points}
associated to a 
\emph{singularity of the inverse function
$\Psi^{-1}$} by defining neighbourhoods
$U_a(\rho)\subset \CC$,
where $a \in \CW_t$ is a 
singular value (\emph{i.e.}
a critical or an asymptotic value of $\Psi$).
Recently,
W. Bergweiler and A. Eremenko have contributed
in this direction, 
mostly applying their work to 
holomorphic dynamics,
see
\cite{BergweilerEremenko}, \cite{EremenkoReview}.
A great part of the complexity of an essential singularity
is that its description 
can require several ideal 
points, each with different behaviour.
For meromorphic functions $\Psi$, 
the ideal points 
are analytically classified as follows:

\noindent
$\bigcdot$
algebraic singularities of the 
inverse function $\Psi^{-1}$ and

\noindent
$\bigcdot$
transcendental singularities of the 
inverse function $\Psi^{-1}$.

\smallskip

We wish to extend this geometric perspective, 
more precisely the study via ideal points 
to not necessarily isolated essential 
singularities of the following:

\begin{enumerate}[label=\roman*),leftmargin=*]
\item
vector fields $X$
and, as a natural consequence,

\item 
certain multivalued functions
$\Psi_{X}$ associated to $X$.
\end{enumerate} 

\smallskip

Our framework is as follows.
Let $M$ be a connected, not necessarily 
compact, Riemann surface. 
By definition, a 
\emph{singular complex analytic function}
$\Psi_X : M \longrightarrow \CW_t$
can admit accumulations of zeros, poles
and/or essential singularities.
Throughout the work,
singular complex analytic means the analogous properties
for vector fields and 1--forms on $M$.
In addition, $\Psi_X$ is 
\emph{additively automorphic} when 
its differential 
$d\Psi_X$ is a singular complex analytic 1--form
on $M$.
Note that if $\Psi_X$ is additively automorphic,
then it can be single or multivalued.
The adjectives single--valued and multivalued 
shall be understood in the strict sense.
Thus, the concept of
\emph{additively automorphic 
singular complex analytic function $\Psi_X$} 
makes sense and includes the meromorphic case.

We recall the natural correspondence,
which will be used throughout  the work,
between a function, a vector field and an
associated Riemann surface. 
Let
$\Psi_{X}: M \longrightarrow \CW_t$ be
an additively automorphic singular complex analytic function.
The
\emph{singular complex analytic vector field $X$} 
on $M$
canonically associated to $\Psi_X$ 
is defined by 
$d\Psi_X (X) \equiv 1$,
see Diagram \ref{diagramacorresp} and 
\S\ref{generalidades-seccion-2}.
Conversely, 
given a complex analytic vector field $X$, the associated
$\Psi_X$ is a (generically multivalued)
additively automorphic singular complex analytic function.
The third element in the correspondence is
the Riemann surface $\R_X \subset M \times \CW_t$, 
roughly speaking the graph of $\Psi_{X}$.

The differential 
$d\Psi_X$ is the \emph{1--form of time of $X$}.
By definition, 
the \emph{residue of $X$ at a point} is
the residue of 
the 1--form of time $d\Psi_X$ at the point.
In particular, $\Psi_X$ is
single--valued if and only if $d\Psi_X$ has 
all its residues and periods\footnote{
As usual, the period of $d\Psi_X$ along $\beta$
is the integral of $d\Psi_X$ in $\beta$, 
where $[\beta]$ is in a basis of $H_1 (M,\ZZ)$ 
and does not enclose an isolated singularity or a conformal puncture of $M$.
}
equal to zero.
For technical reasons, throughout the entire work 
we require that the set of points where the 
1--form of time $d\Psi_X$
has nonzero residues, be numerable.

\smallskip
The classical theory of singularities of
the inverse function
$\Psi_X^{-1}$ fails for multivalued additively
automorphic singular complex analytic functions.
As a simple example, 
consider a zero of $X$ with nonzero residue
which gives origin to 
an essential singularity of  $\Psi_X^{-1}$ at
the singular value $\infty\in\CW_t$.
Thus, by the classical Casorati--Weierstrass theorem, 
the image of any neighbourhood of $\infty\in\CW_t$ is dense in $M$, 
\emph{i.e.} the neighbourhoods $U_\infty(\rho)$ are not 
useful in order to distinguish ideal points.

As a valuable central result, 
in \S\ref{caso-multivaluado-para-Psi}
we extend Iversen's theory to also 
hold for additively automorphic 
singular complex analytic functions
$\Psi_X$,
by introducing the 
\emph{fundamental domain $\Lambda$ of $\Psi_X$},
which is essentially
a maximal univalence region for $\Psi_X$.

The usefulness of vector fields $X$ in the study 
of functions $\Psi_X$
can be roughly stated as follows.
The vector field 
distinguishes the finite and 
infinite singular values $a\in\CW_t$
of $\Psi_X$  and its ideal points $U_a$ in
a clear geometric way.  
A natural/heuristic idea of this
is to exploit the 
phase portrait of the real part $\Re{X}$. 
This method allows us to describe 
the logarithmic singularities of $\Psi^{-1}_X$ in geometric terms.
Namely, the exponential tracts of $\Psi_X$ 
can be naturally classified as elliptic and hyperbolic
tracts, see
Figures \ref{2-sectores},
\ref{flores-elipticas-hiperbolicas}
and \ref{album-afin-AAP}. 
This leads us to the following:

\emph{Ansatz: The ideal points or singularities of $\Psi_X^{-1}$ can be understood as 
the points of the ideal boundary\,\footnote{
For the sake of simplicity we 
consider algebraic singularities also
as ideal points, even though they are not on the boundary \emph{per se}.} 
of $M$ 
minus the essential singularities and the multivalued
locus of $\Psi_X$. 
In other words, the ideal points are the branch 
points of the Riemann surface $\R_X$.}

Regarding the singularities of 
the inverse for single--valued $\Psi_X$
on $M=\CC$,
the cases of algebraic and logarithmic singularities are understood best. Recall 
the following well known classical result.

\begin{theorem*}[R.~Nevanlinna, \cite{Nevanlinna1} Ch. XI, \S1.3]
A transcendental singularity of $\Psi^{-1}$ 
over an isolated asymptotic value is logarithmic.
\end{theorem*}

\noindent 
We shall prove a stronger version of the above result
(the if and only if assertion and 
the extension to the multivalued case).
For this,
we require the following definitions
and methods suggested by the above ansatz.
Roughly speaking, a 
\emph{$\star$--transcendental singularity
of $\Psi_X^{-1}$}
arises from a pole of $d \Psi_X$ with
nonzero residue, see Definition 
\ref{definicion-singularidades-en-Lambda}.
Secondly, a singularity $U_a$ is
\emph{separate} if 
for a small enough $\rho>0$ the neighbourhood 
$U_{a}(\rho)$ does not intersect any other neighbourhood 
of another ideal point;
Definition \ref{separate} provides full details,
see
Examples \ref{no-finite-asymptotic-values}--\ref{Ejemplo-IntExpExp} 
in \S\ref{ejemplos-seccion}.
Regarding Nevanlinna's result,
for single--valued $\Psi_X$,
a non isolated singular value 
can support separate (including logarithmic) and 
non separate singularities.
Our result 
covers the single and multivalued cases.

\begin{theo2}[Separate singularities]
%\label{teo-logaritmicas-separada}
Let 
$\Psi_{X}: M \longrightarrow \CW_t$
be an
additively automorphic 
singular complex analytic function.
%with an essential singularity at $z_{\tt s}\in M$.
A singularity $U_a$ of $\Psi_X^{-1}$
is separate if and only if $U_a$ is 
one of the following:

\begin{enumerate}[label=\arabic*),leftmargin=*]
\item
algebraic, 

\item
$\star$--transcendental, 

\item 
logarithmic. 
\end{enumerate}
\end{theo2}

Noting that 
the geometry\footnote{By geometry, 
we understand
the geodesics described by $\Re{X}$, 
respect to the singular flat metric from $X$, and
the topology of the phase portrait of $\Re{X}$.} 
of transcendental separate singularities is independent
of the value of the corresponding residue,
it is natural to ask:
which new phenomena appear for 
multivalued $\Psi_X$?

\noindent
Using
Definition \ref{definicion-singularidades-en-Lambda},
the relationship between the singularities of 
$\Psi_X^{-1}$ and the singularities of $X$
is the statement of   
{\bf Theorems} \ref{singularidades-algebraicas-y-logaritmicas} 
and \ref{puntos-ideales-en-terminos-de-X}. 
A rough description 
of the relationship
is as follows:
\begin{equation*}
\begin{array}{rl}
\begin{array}{r}
\hbox{algebraic } \\
\hbox{singularity of }\Psi_X^{-1}
\end{array} & \left\{
\begin{array}{l}
\hbox{poles of $X$, $\mathcal{P}, $} \\
\hbox{zeros of $X$ with residue zero, $\mathcal{Z}_0$, } \\
\end{array} \right. 
\\
& \vspace{-.3cm} \\
\begin{array}{r}
\hbox{transcendental } \\
\hbox{singularity of }\Psi_X^{-1}
\end{array} & \left\{
\begin{array}{l}
\hbox{zeros of $X$ with nonzero residue,  $\mathcal{Z}_{R}$,} \\
\hbox{essential singularities of $X$ 
with residue zero, $\mathbb{E}_{0}$, } \\
\hbox{essential singularities of $X$ 
with nonzero residue, $\mathbb{E}_{R}$,} \\
\text{essential singularities of $X$ 
\emph{without residue}, $\mathbb{E}_{nR}$.}
\end{array} \right.
\end{array}
\end{equation*}

\noindent
The adjective \emph{without residue} means that the residue
of $d\Psi_X$ 
at the respective essential singularity 
is not well defined
(for example, if it is an accumulation of singularities with 
nonzero residue).
\emph{The above relationship is far from being a bijection;
an essential singularity of $X$  
gives rise to none, one or more than one transcendental singularity
of $\Psi_X^{-1}$.
}
Example \ref{sing-campo-no-implica-sing-psi}
provides a singularity of $X$ in $\mathbb{E}_{nR}$ that 
does not allow a singularity of the inverse $\Psi_X^{-1}$. 
In addition, 
Example \ref{una-sola-singularidad-de-la-inversa}
provides a singularity of $X$ in $\mathbb{E}_{0}$ with exactly
one singularity of $\Psi_X^{-1}$.
Theorem 4 below describes generic $X$, where 
an essential singularity  supports and even number
of singularities of $\Psi_X^{-1}$.
As a fortunate coincidence,

\centerline{$\mathcal{S}= \mathcal{P}\cup \mathcal{Z}_0 \cup
\mathcal{Z}_R \cup \mathbb{E}_0 
\cup \mathbb{E}_R \cup \mathbb{E}_{nR}$}

\noindent 
also refers in a uniform way to the singularities of 
$X$, $\omega_X$ and $\Psi_X$.
For example, 
$z_{\tt s} \in \mathcal{P}$ 
denotes a pole of $X$, simultaneously a zero
of $\omega_X$
and a critical point of $\Psi_X$.

In \S \ref{ejemplos-seccion},
we study finite dimensional holomorphic families of
additively automorphic  functions $\Psi_X$ with 
essential singularities.
The use of the fundamental domain $\Lambda$ 
technique allows us to
reduce their study to single--valued functions 
$\Psi_{X, \, \Lambda}$. 
As one of the contributions of this work,
the singularities of $\Psi_X^{-1}$
for the multivalued case  
are considered for the first time 
in the literature. 
In particular, 
vector fields with 
essential singularities that are accumulation points
of zeros with nonzero residue
are examined. 
 
We consider the families 

\centerline{
$\E(s, r,d)= \left\{
X(z) = \frac{Q(z)}{P(z)} \e^{E(z)} \del{}{z}
\ \Big\vert \
Q, \, P,  \, E \in \CC[z] \hbox{ of degree }
s, r, \, d \geq 1 
\right\}$.
}

\noindent 
{\bf Theorem} \ref{familias-s-r-d} describes 
the associated additively automorphic functions 

\centerline{$
\Psi_X (z)
= \int^z \frac{P(\zeta)}{Q(\zeta)} \e^{-E(\zeta)} d \zeta ,
\ \ \ 
\text{ on } 
M= \CW_z .
$}

\noindent  
All the singularities of $\Psi_X^{-1}$ are separate. 
The zeros and poles with zero residue of $d\Psi_X$ correspond to algebraic singularities of $\Psi_X^{-1}$.
The poles with nonzero residue of $d\Psi_X$ correspond to 
$\star$--transcendental singularities of $\Psi_X^{-1}$.
The essential singularity consists of $2d$ logarithmic singularities: 
$d$ elliptic tracts and $d$ hyperbolic tracts 
equidistributed about $\infty\in\CW_z$.

\noindent 
These functions are the simplest in several deep
subjects, 
determining functions with a finite 
number of singular values. 
They are related to 
the Schwartzian second order differential equation,
see
\cite{Hille}, \cite{Nevanlinna1} Ch. XI, 
and appear in the deformation of ramified coverings 
\cite{Taniguchi1} and \cite{Taniguchi2}. 
Also see our
previous work \cite{AlvarezMucinoII} and references therein.

As second kind of families, 
{\bf Theorem} \ref{familias-P-r}
studies the functions 

\centerline{
$\Psi_X(z) = R \big( \e^{2\pi iz/ T}\big)$,
}

\noindent 
where $R(w)$ are rational functions of 
degree ${\tt r} \geq 1$, $T \in \CC^*$.
A systematic description that depends
on the behaviour of $R$ is provided.
Note that this family is the simplest having periodic
functions and/or vector fields
where an accumulation of zeros and poles at the 
essential singularity $\infty$ appear.

\noindent 
In \S\ref{sporadic-examples},
the geometrical richness of the behaviour of the singularities of the inverse function $\Psi_X^{-1}$
that may appear, even in the single--valued case, is explored.
Examples of single--valued functions are considered from the 
perspective of vector fields;
also examples of vector fields $X$ which give rise to multivalued
additively automorphic $\Psi_X$ are fully explained.

In \S\ref{aplicaciones} we provide three applications.
In \S\ref{flujo-maximal}, we  obtain
a description of the 
maximal region for complex trajectory solutions of $X$, which \emph{a priori} are multivalued.

\begin{theo6}[Maximal univalence region for trajectory solutions]
Let $X$ be a singular complex analytic vector field
on $M$.
The maximal univalence region for a non stationary
complex solution $z(t)$ of $X$ is
\begin{equation*}
\mathscr{D}_X
=
\{ (z,\Psi_X(z) ) \ \vert \
z \in M \backslash \mathcal{S} \}.
\end{equation*}
 
\noindent 
Moreover, 
$\mathscr{D}_X$ is independent of 
the initial condition
$z_{\tt o} \in 
M \backslash \mathcal{S}$.
\end{theo6}

\hskip0.7cm
As a second application,
an \emph{incomplete trajectory} of $X$
is a solution of $\Re{X}$ with
a strict subset of $\RR$ as maximal
domain of existence.
Clearly, from the local analytic normal form of $X$, 
each pole $p$ of $X$  provides a 
finite number of
incomplete trajectories.
In Proposition
\ref{prop:infinidad-trayectorias-incompletas},
the following natural result
is presented.

\noindent 
\emph{Every 
nonrational, 
singular complex analytic vector field $X$ on a compact Riemann surface $M_\mathfrak{g}$, 
of genus $\mathfrak{g}$, 
has an infinite number of incomplete trajectories. }
At a pole or essential singularity of $X$,
the following clear mechanism occurs.

\begin{theo7}[Incomplete trajectories and finite singular values]
Let $X$ be a singular complex analytic vector field on
$M$.
The following statements are equivalent. 
\begin{enumerate}[label=\arabic*),leftmargin=*]
\item
There exists an
incomplete trajectory $z({\tt t} )$ of $X$ 
having $\alpha$ or $\omega$--limit at 
$z_{\tt s} \in M$.

\item
There exists a finite singular value 
$a \in\CC_t$ of $\Psi_X$, whose asymptotic path 
$\alpha_a ({\tt t} )$ is a trajectory of $\Re{X}$ ending at $z_{\tt s} \in M$.
\end{enumerate}
\end{theo7}

\noindent 
This raises a natural question:  
which neighbourhoods 
$U_a(\rho)$ of the singularities of 
$\Psi_X^{-1}$ contain incomplete trajectories,
and how many are there?

\noindent 
As example, 
the  neighbourhoods $U_\infty(\rho)$ of separate 
singularities over $\infty \in  \CW_t$ do not have
incomplete trajectories.
An analogous problem has been recently considered by
J. K. Langley 
\cite{Langley}, \cite{Langley-2} and
\cite{Langley-3}.
As an application 
of Theorem \ref{teo-logaritmicas-separada}, in \S\ref{localizingIncomplete} we prove a 
constructive description of how the incomplete 
trajectories of $X$ on a
Riemann surface $M$ 
arise in a vicinity of an essential singularity.

\begin{theo8}[Localizing incomplete trajectories]
%\label{Teo:ValAsintotico-TrayIncompleta}
Let $X$ be a singular complex analytic vector field on  
$M$ with an essential singularity at $z_{\tt s} \in M$.

\begin{enumerate}[label=\arabic*),leftmargin=*]
\item
Any neighbourhood $U_a(\rho)$, 
of an essential transcendental singularity $U_a$ of 
$\Psi_X^{-1}$ over a finite 
asymptotic value $a\in\CC_t$,  
contains an infinite number
of incomplete trajectories of $X$.

\item
If $\Psi_X$ has no finite asymptotic values
at $z_{\tt s}$,
then $X$ has an infinite number of poles accumulating at 
$z_{\tt s} \in M$.
\end{enumerate}
\end{theo8}

\noindent
In other words, any neighbourhood 
of an essential singularity of $X$ has an 
infinite number of incomplete trayectories.

As a third and final application, 
in \S\ref{xi-campo-Broughan-subseccion}, 
by recalling the work of K. Broughan \cite{Broughan}
on the Riemann $\xi$--vector field
$X_\xi(z)=\xi(z)\del{}{z}$,  
we show that it
is not holomorphically equivalent to a 
pullback of a periodic vector field with a finite 
number of distinct residues.
Furthermore, 
we show that 
$\Psi_{X_\xi}^{-1}$ 
has two logarithmic singularities 
over finite asymptotic values, 
whose hyperbolic tracts are the left and right half 
planes delimited by the critical strip. 
In addition,
$\Psi_{X_\xi}^{-1}$ has
an infinite number of $\star$--transcendental 
singularities over $\infty$ corresponding to the
zeros with nonzero residue in the critical strip;
see Proposition \ref{prop:Xi-Riemann}.

In \S \ref{Future-work-subseccion},
some possible avenues of further research are
presented.

Finally, we make a few comments from a panoramic viewpoint:

\noindent
$\bigcdot$
In Riemann surface theory, all the 
meromorphic functions
can be constructed by using  the elementary blocks 
$\{ z \mapsto z^d\}$, \emph{i.e.} the algebraic singularities of the inverse.

\noindent 
$\bigcdot$
Many singular complex analytic functions
can be constructed by using two new elementary blocks: 
hyperbolic and elliptic tracts,  \emph{i.e.} the logarithmic singularities of the inverse.

\noindent 
$\bigcdot$
In the general case of singular complex analytic functions, 
an infinite number of new blocks appear: those arising from the non separate 
singularities of the inverse.

\noindent
$\bigcdot$
Furthermore, for multivalued functions, 
the $\star$--transcendental singularities 
of the inverse 
complete the above elementary blocks.

\noindent 
$\bigcdot$
In any case, as the examples throughout the text show, 
clear patterns can be recognized
by using the above elementary building blocks. 

%%%%%%%%%%%%%%%%%%%%%%%%%%%%%%%%%%%%%%%%%%%%%%%%%%%%%%%

\section{General facts about functions and vector fields}
\label{generalidades-seccion-2}

\subsection{Functions and vector fields on Riemann surfaces}

Let $M$ be a Riemann surface, not necessarily compact, 
if we assume that ${\tt p}$
is a conformal puncture\footnote{ 
By definition,
$M \cup \{{\tt p}\}$ admits a holomorphic chart
$\phi_{\tt j}: V_{\tt j} \subset M
\longrightarrow 
D(0, 1) \subset \CC$ to the unitary disk
with $\phi_{\tt j}({\tt p})=0$,
compatible with the atlas of $M$.%
}
of $M$, then 
we consider ${\tt p}$ in $M$.
Thus, our Riemann surface $M$ includes their 
conformal punctures.

\begin{definition}
\label{definicion-SCA} 
On $M$, the adjective
\emph{singular complex analytic}
for   
functions, vector fields, 1--forms 
and quadratic differentials, 
means that 
they may have accumulation of zeros, 
poles and/or essential singularities.
\end{definition}

The singular complex analytic category 
includes holomorphic and meromorphic 
objects on compact Riemann surfaces, 
which are not transcendental meromorphic: 
{\it i.e.} 
singular complex analytic is a larger class.

\begin{definition}[\cite{Berenstein-Gay} p.\,579]
\label{aditivamente-automorfa}
A multivalued or single--valued analytic function 
$\Psi_X$ on $M$
is \emph{additively automorphic} when 
its differential $d\Psi_X$ is a single--valued 1--form. 
\end{definition}

\noindent
Of course any single--valued singular complex analytic function is additively automorphic; 
however, not all multivalued singular complex analytic functions are additively automorphic.

\smallskip
\noindent
{\bf Notation.} 
1. 
An \emph{additively automorphic
singular complex analytic function $\Psi_X$} on $M$ 
satisfies Definitions   
\ref{definicion-SCA} and  \ref{aditivamente-automorfa}.

\noindent 2. 
A \emph{single--valued additively automorphic
singular complex analytic function $\Psi_X$} on $M$
is strictly single--valued. 

\noindent 3. 
A \emph{multivalued additively automorphic
singular complex analytic function $\Psi_X$} on $M$
is strictly multivalued.

\smallskip

The advantage of the
subscript $X$ is explained below. 

\smallskip 

Throughout this work, 
we assume that all the 
vector fields $X$ 
are not identically zero 
and that the functions $\Psi_X$
are not identically constant. 
The formal expression 
of a vector field $X$ in holomorphic charts 
$\{
\phi_{\tt j} : V_{\tt j} \subset M \longrightarrow 
\CC_z \}$ 
must be
$\{ f_{\tt j}(z) \del{}{z} 
\ \vert \  
z \in \phi_{\tt j}( V_{\tt j})  \}$; 
as far as possible, 
we avoid this cumbersome notation.

\smallskip

\emph{From 
additively automorphic 
singular complex analytic functions 
to 
singular complex analytic vector fields.}
Let 

\centerline{ 
%\label{la-Psi}
$\Psi_X : M \longrightarrow \CW_t$
}

\noindent
be an additively automorphic 
singular complex analytic function
(probably not well defined at every point 
since we are abusing notation).
Since
its differential is single--valued,
the canonical associated singular complex analytic
vector field is

\centerline{
%\label{campo-asociado-a-la-psi}
$X(z) = \frac{1}{\Psi_{X}^\prime (z)} \del{}{z}
\ \ \ \hbox{ on } M$.
}

\emph{From 
singular complex analytic vector fields 
to 
additively automorphic 
singular complex analytic functions.}
Let 
\begin{equation}\label{campo-vectorial}
X(z)= f(z) \del{}{z}
\end{equation}
\noindent 
be a singular complex analytic 
vector field on $M$. 
By definition, the singular complex analytic
\emph{1--form of time of $X$} is 
\begin{equation}
\label{1-forma-de-tiempo}
\omega_X(z)= \frac{dz}{f(z)} . 
\end{equation}

\noindent 
We want to define $\Psi_X(z)= \int^z \omega_X$
with  single--valued  $\omega_X$.

\begin{remark}
The residue of $\omega_X$ at $z_0 \in M$,
\begin{equation}
\label{residuo-formula-general}
Res(\omega_X, z_0)=
\frac{1}{2 \pi i }\int_\gamma {\omega_X} \in \CC
\end{equation}

\noindent 
is well defined
if and only if $z_0$ is 

\begin{enumerate}[label=\roman*),leftmargin=*]
\item
a regular point of $X$
(as usual the counterclockwise path $\gamma$
encloses $z_0$ and the integral is zero),

\item
an isolated singularity of $X$, 
or

\item
a nonisolated singularity of $X$
which is at most an accumulation of
singular points with residue zero, 
\emph{e.g.} of poles of $X$
(in this case
the path $\gamma$ encloses $z_0$ and those infinite number
of singular points).
\end{enumerate}

\noindent 
By definition, 
the \emph{residue of $X$ at a point $z_0$}
is the residue of $\omega_X$, 
\emph{i.e.} 

\centerline{$Res(X, z_0) \doteq Res(\omega_X, z_0)$.}
\end{remark}

\noindent  
The \emph{singularities of $X$}, 
\begin{equation}
\label{singularidades-de-X}
\mathcal{S} = \{  z_{\tt s} \} =
\underbrace{\, \mathcal{Z}_0 \cup  \mathcal{Z}_{R} \, }_{\mathcal{Z}} 
\cup
\, \mathcal{P} 
\cup 
\underbrace{\, \mathbb{E}_0 \cup
\mathbb{E}_{R} \cup \mathbb{E}_{nR} \, }_{\mathbb{E}}     
\subset M, 
\end{equation}

\noindent 
are possibly infinite, of the following kinds:

\smallskip
\noindent 
The \emph{zeros of $X$},

\centerline{
$\mathcal{Z}=\{ q \} = 
\mathcal{Z}_0 \cup \mathcal{Z}_{R}$,}  

\noindent
here $\mathcal{Z}_0$ (resp. $\mathcal{Z}_{R}$)
denotes the zeros of $X$ with residue zero  
(resp. with nonzero residue).

\smallskip
\noindent 
The \emph{poles of $X$}, 

\smallskip 
\centerline{
$\mathcal{P}=\{ p \}$. }

\smallskip

\noindent 
The \emph{essential singularities of $X$},

\smallskip 

\centerline{
$\mathbb{E} =  \{ e \}
=
\mathbb{E}_0 \cup \mathbb{E}_{R} \cup \mathbb{E}_{nR}$, 
}

\smallskip 

\noindent
here $\mathbb{E}_0$ (resp. $\mathbb{E}_{R}$)
denotes the essential singularities of $X$ 
with residue zero  
(resp. with nonzero residue), and 
the points 
$\mathbb{E}_{nR}$ where the residue is not
well defined; these last
are accumulation of
points of $\mathcal{Z}_{R} \cup \mathbb{E}_{R}$.

\noindent
In addition, we introduce the following notations

\smallskip

\centerline{
$\mathcal{S}_0 = \mathcal{Z}_0 \cup \mathbb{E}_0$
\ and \
$\mathcal{S}_{R} = 
\mathcal{Z}_{R} \cup \mathbb{E}_{R}$.}

\noindent
Because of technical reasons, to be used in \S 
\ref{caso-multivaluado-para-Psi}, we require that 
$\mathcal{S}_R$ is at most a numerable set. 
Through all the work, 
$\mathcal{S}$ in \eqref{singularidades-de-X}
also refers to the singularities of $\Psi_X$  and $\omega_X$.
For example, 
in accordance with Equations 
\eqref{campo-vectorial} and
\eqref{1-forma-de-tiempo},
$z_{\tt s} \in \mathcal{Z}$ denotes
a zero of $X$ and simultaneously a pole of $\omega_X$.

\noindent 
Note that 

\centerline{
$M \backslash (\overline{\mathcal{S}_{R}}
\cup \mathbb{E}_{0} )  =
M \backslash (\mathbb{E} \cup  \mathcal{Z}_{R}).
$ }

\smallskip

\noindent
The \emph{additively automorphic singular complex 
analytic function associated to $X$} is 
\begin{equation}
\label{parametro-local}
\Psi_X (z)=\int^z_{z_{\tt o}} \omega_X:
M \backslash 
(\mathbb{E} \cup \mathcal{Z}_{R})
\longrightarrow \CW_t 
\end{equation}

\noindent 
and the initial point of integration is
a nonsingular point
$z_{\tt o}  \in M \backslash \mathcal{S}$; 
for simplicity, we omit it in some instances.

\begin{remark}\label{cuando-Psi-es-multivaluada}
1. 
In \eqref{parametro-local},
the integral function 
$\Psi_X(z)$ is single--valued if and only if 
both

\noindent i)
the residues 
$Res(\omega_X, z_{\tt s})$,
for $z_{\tt s} \in \mathcal{S}$, 
and 

\noindent ii)
the periods
$\int_\beta {\omega_X}$, 
where the class $[\beta]$ is in a basis of 
the fundamental group
$\pi_1 (M)$ and 
does not enclose an isolated singularity,

\noindent 
are both zero.

\noindent 
2. Assertion 1.i is equivalent to
$\mathcal{S}_{R}=
\mathcal{Z}_{R} \cup \mathbb{E}_{R}= \varnothing $.

\noindent 
3. 
The multivaluedness of the integral function
shall be studied in 
\S \ref{caso-multivaluado-para-Psi}.
\end{remark}

\begin{remark}
1. In both cases, single--valued or multivalued,
$\Psi_X$ is a \emph{global flow box} 
that rectifies the corresponding
singular complex analytic vector 
field $X$, thus 

\centerline{$\big( \Psi_X \big) _* X = \del{}{t}$.}

\noindent 
2.
In the language of quadratic differentials, 
$\Psi_X$ is
the global distinguished parameter of $X$,
and we exploit the global nature. 
Clearly, the poles of $X$ determine zeros of $\omega_X$
and critical points of $\Psi_X$ in $M$.
\end{remark}

\begin{proposition}[Dictionary between the 
singular analytic objects, 
\cite{MR},
\cite{AlvarezMucino} \S2] 
\label{basic-correspondence}
On a Riemann surface $M$,
there exists a canonical correspondence 
between the following objects.

\begin{enumerate}[label=\arabic*),leftmargin=*]

\item
A singular complex analytic vector field
$X = f(z) \del{}{z}$, as in 
\eqref{campo-vectorial}.

\item 
A singular complex analytic 1--form
$\omega_X = dz / f(z)$, as in 
\eqref{1-forma-de-tiempo}.

\item
An 
additively automorphic 
singular complex analytic function 
$
\Psi_X(z)=\int^z \omega_X $
as in \eqref{parametro-local}. 

\item
An orientable singular complex analytic quadratic differential 
$\mathcal{Q}_X=\omega_X \otimes \omega_X$, where the trajectories  
of $X$ coincide with horizontal trajectories of 
$\mathcal{Q}_X$.

\item
A singular flat metric 
$g_X = \Psi^*  \vert dt \vert $ on $M$, 
which is 
the pullback of the flat Riemannian metric
$\vert dt \vert = d{\tt t}^2 + d{\tt s}^2$, 
$t \doteq {\tt t} + i {\tt s} \in \CC$,
having suitable singularities at $\mathcal{S}$ 
and a unitary geodesic vector field $\Re{X}$.
By abuse of notation,
$(M ,g_X)$ denotes this singular noncompact
Riemannian manifold.

\item
A Riemann surface
$\big( \R_{X},\pi^*_2(\del{}{t}) \big)$
associated to an additively automorphic 
singular complex analytic function $\Psi_X$, 
where
\begin{equation}
\label{R_X-definicion}
\R_{X}= 
\big\{
(z,t) \ \vert \  
t=\Psi_{X}(z), \, 
z \in 
M \backslash 
(\mathbb{E} \cup \mathcal{Z}_R) 
\big\} \subset 
M \times\CW_{t}.
\end{equation}

\end{enumerate} 
 
\noindent 
Diagrammatically, 

\smallskip
\begin{picture}(200,128)(-70,-35)
\put(65,86){$X(z)=f(z)\, \del{}{z} $}
\put(4,53){$\omega_X(z)= \frac{dz}{f(z)}$}
\put(120,53){$\Psi_X (z)= \int\limits^z \omega_X$}
\put(60,-35){$\big((M , g_X ), \Re{X}\big)$ \ .}

\put(58,80){\vector(-1,-1){15}}
\put(44,66){\vector(1,1){15}}

\put(152,66){\vector(-1,1){15}}
\put(138,80){\vector(1,-1){15}}

\put(58,-24){\vector(-1,1){15}}
\put(44,-10){\vector(1,-1){15}}

\put(151,-10){\vector(-1,-1){15}}
\put(138,-23){\vector(1,1){15}}

\put(0,0){$\mathcal{Q}_X(z)= \frac{dz^2}{f(z)^2}
%\omega_X \otimes \omega_X (z)
$}
 \put(128,0){$\big(\R_X, \pi^{*}_2 (\del{}{t})\big)$}

\put(36,37){\vector(0,-1){20}}
\put(36,22){\vector(0,1){20}}

\put(155,37){\vector(0,-1){20}}
\put(155,22){\vector(0,1){20}}

\put(-87,15){\vbox{\begin{equation}\label{diagramacorresp}
\end{equation}}}

\put(337,-38){$\qed$}

\end{picture}
\end{proposition}

\begin{remark} 
The correspondence \eqref{diagramacorresp}  
must be understood 
up to choice of initial point $z_{\tt o}$ for
the integral defining the global distinguished parameter. 
Thus,  
$\Psi_X$ and $\Psi_X + c$, 
for $c \in \CC$,   
are considered the same object.
\end{remark} 
 
\begin{example}[Abelian integrals] 
1.
Note that
nonadditively automorphic multivalued functions do 
not produce singular complex analytic vector fields. 
For instance, consider the 
nonadditively automorphic multivalued 
singular complex analtytic function

\centerline{$\Theta(z)=
{\displaystyle \int^{z} }
\dfrac{d\zeta}{\sqrt{P(\zeta)}}: 
\CW_z \longrightarrow \CW_t$,
\ \ \
where $P\in\CC[z]$, $\deg{P}\geq2$.}

\noindent 
Obviously, $\sqrt{P(z)}\del{}{z}$ is not a single--valued vector field on $\CC_z$. 
However, 
on the (hyper) elliptic Riemann surface 
$M=\{ w^2-P(z)= 0\}$,
the integrand $dz/\sqrt{P(z)}$ 
determines a holomorphic 1--form $\omega_X$, 
thus the Abelian integral

\centerline{$\Psi_X(\mathfrak{z})= 
{\displaystyle \int^\mathfrak{z} }
\omega_X: M \longrightarrow \CC$} 

\noindent 
is an 
additively automorphic singular complex analytic function on $M$. An associated meromorphic vector field
$X$ on $M$ is well defined. 

\noindent 
2. Let $\omega_X$ be a meromorphic 1--form on a compact 
Riemann surface $M$. The integral function 
$\Psi_X (z)= \int^z \omega_X : M \backslash \mathcal{Z}_R
\longrightarrow \CW_t$ is an
additively automorphic singular complex analytic function. 
\end{example}

\begin{remark}
Note that vector fields $X$ with 
$\mathbb{E}_{nR} \neq \varnothing$ are quite common,
for instance
see Figure \ref{album-afin-AAP} (d), (e) and (f) discussed  in Examples 
\ref{ejemplo-sin}, 
\ref{una-sola-singularidad-de-la-inversa} 
and \ref{ejemplo-tan}, where
$\mathbb{E}_{nR}$ is an accumulation of 
points of $\mathcal{Z}_{R}$ 
in the first two cases and
an accumulation of $\mathcal{Z}_{R}\cup\mathcal{P}$ in the third case.
See also Example \ref{sing-campo-no-implica-sing-psi}.
\end{remark}

%%%%%%%%%%%%%%%%%%%%%%%%%%%%%%%%%%%%%%
\begin{figure}[htbp]
\begin{center}
\includegraphics[width=0.90\textwidth]{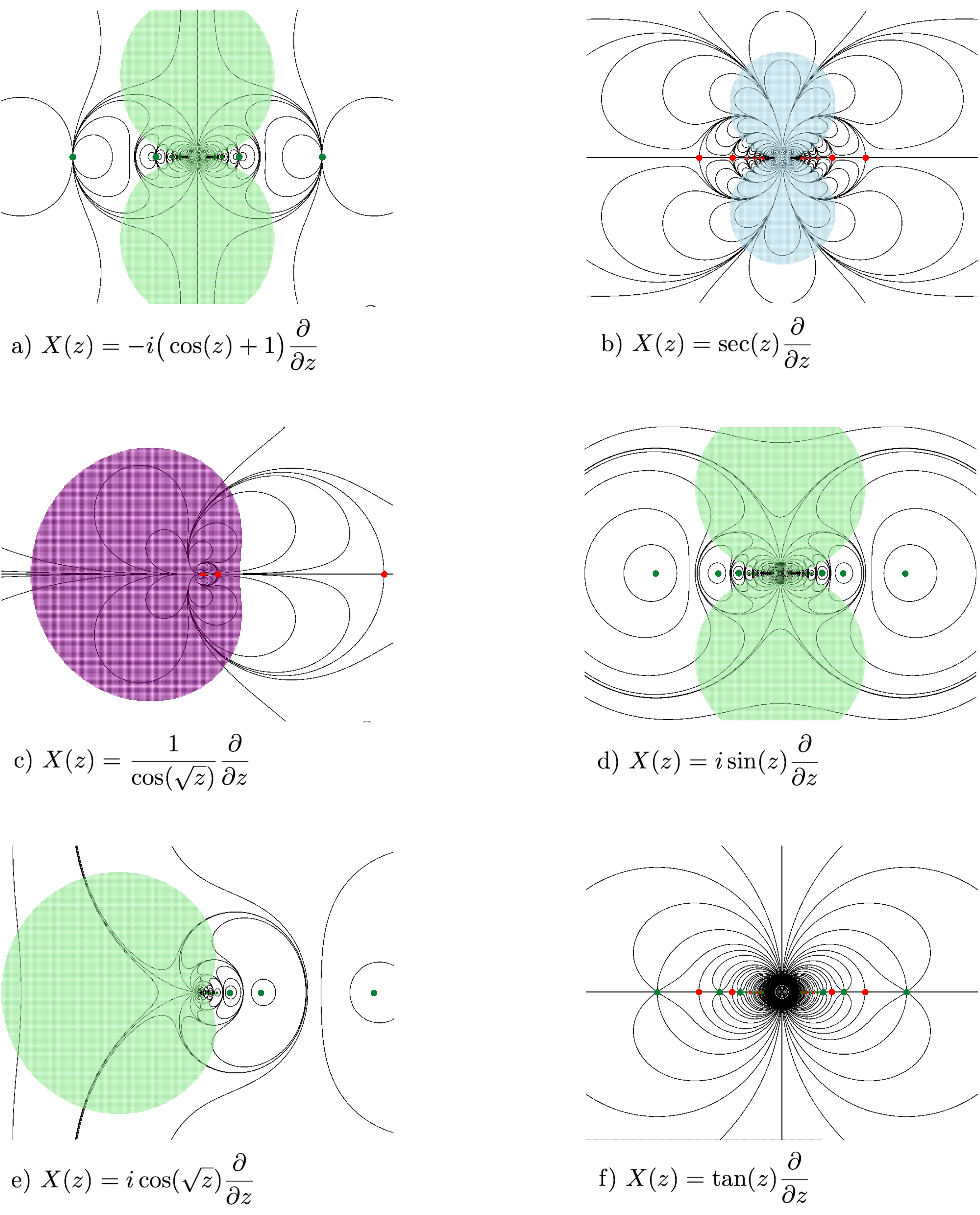}
\caption{
%album-afin-AAP.pdf.
Geometry
of vector fields $X$ on $\CW_z$ 
with an essential singularity
at $\infty$,
with accumulations of poles and/or zeros.
(a) is 
described in
Example \ref{coszMasUno}, 
(b) in Example \ref{no-finite-asymptotic-values}, 
(c) in Example \ref{una-sola-singularidad-de-la-inversa},
(d) in Example \ref{ejemplo-sin}, 
(e) in Example \ref{cos-raiz}
and 
(f) in Example \ref{ejemplo-tan}.
The colouring scheme for neighbourhoods $U_a(\rho)$,
determining singularities of $\Psi_X^{-1}$,
is green for hyperbolic tracts and
blue for elliptic tracts
(to be described in 
Definition \ref{tract-eliptico-hiperbolico} 
and Figure \ref{2-sectores}). 
Moreover, purple region in (c) denotes a
connected component $U_\infty (\rho )$.
Green and red dots represent zeros and poles of $X(z)$ respectively.
It is remarkable,
that G. Gyllstr\"om \cite{Gyllstrom} 
describes intricate phase portraits of ordinary differential equations
one century ago.} 
\label{album-afin-AAP}
\end{center}
\end{figure}
%%%%%%%%%%%%%%%%%%%%%%%%%%%%%%%%%%%%%%

\begin{lemma}\label{Lemma-RX}
With the notation as above.
\begin{enumerate}[label=\arabic*),leftmargin=*]
\item The following diagram of pairs,
(Riemann surface, vector field), commutes

\begin{center}
\begin{picture}(180,65)(0,20)

\put(-131,40){\vbox{\begin{equation}\label{diagramaRX}\end{equation}}}

\put(-40,75){$\big(
(M \backslash 
( \mathbb{E} \cup \mathcal{Z}_{R} )
,X\big) $}

\put(115,75){$\big(\R_X,\pi^*_2(\del{}{t})\big)$}

\put(108,78){\vector(-1,0){60}}
\put(75,85){$\pi_1$}

\put(133,65){\vector(0,-1){30}}
\put(138,47){$ \pi_2 $}

\put(38,65){\vector(2,-1){73}}
\put(55,39){$ \Psi_X $}

\put(115,20){$\big(\CW_t,\del{}{t}\big) $,}

\end{picture}
\end{center}
\noindent 
where
$\pi_1$ and $\pi_2$ are local isometric, possibly branched coverings over 
$(M, g_X)$ and 
$(\CW_t, \vert dt \vert)$ 
as singular Riemannian manifolds, 
respectively.

\item
Moreover, 
$\Psi_X$ is single--valued
if and only if 
the projection 
$\pi_1$ is a biholomorphism between

\centerline{ 
$\big( \R_{X},\pi^*_2(\del{}{t}) \big)$ 
\ and  \   
$\big(M\backslash(\mathbb{E}\cup\mathcal{Z}_{R}),X\big)$.
}

\item 
The (ideal) boundary of $\R_X$ is totally disconnected, separable and compact.
\end{enumerate}
\end{lemma}
\begin{proof}
A proof of (3) can be found in 
\cite{Ahlfors-Sario} Ch.\,I\,\S\,6, or
\cite{Richards} as proposition 3.
\end{proof}

We shall use the abbreviated form $\R_{X}$ instead of the 
cumbersome $\big(\R_{X},\pi^*_2(\del{}{t})\big)$.

\begin{example}
Let $X$ be a singular complex analytic vector field 
on $M= \CW_z$. 

\noindent 
1. By Lemma \ref{Lemma-RX}.2 above,
the Riemann surface
$\R_X$ is biholomorphic to 
$\CW_z \backslash \mathbb{E}$
if and only if every zero or essential singularity
of $X$ has zero residue
(in symbols $\mathcal{Z} = \mathcal{Z}_0$, 
$\mathbb{E} = \mathbb{E}_0$).

\noindent 
2. The Riemann surface
$\R_X$ is the
the universal cover of 
$\CW_z \backslash 
(\mathbb{E} \cup \mathcal{Z})$
if and only if every zero or essential singularity
of $X$ has nonzero residue
(in other words
$\mathcal{Z}_0 \cup \mathbb{E}_{0}  \cup \mathbb{E}_{nR}= \varnothing$).
\end{example}

\begin{definition}
\label{trayectoria-real}
A maximal real \emph{trajectory solution of $X$} is
$
z(  {\tt t} ): (a, b)
\subseteq \RR \longrightarrow M 
\backslash (\mathcal{P} \cup \mathbb{E})$, 
where $a, b \in \RR \cup \{ \mp\infty\}$,
satisfying that 

\centerline{
$\dfrac{dz( {\tt t} )}{d {\tt t} } = f(z(  {\tt t} )), 
\ \ \ 
z(0) = z_{\tt o } \in M \backslash (\mathcal{P} \cup \mathbb{E})$.
}

\noindent  
Equivalently, $z( {\tt t} )$ is a trajectory of the
associated real vector field $\Re{X}$.
\end{definition}

Abusing notation, 
the phase portrait of $X$ means the
portrait of the real vector field $\Re{X}$. 
Moreover, the trajectories 
$z( {\tt t} )$ of $\Re{X}$
coincide with the level sets

\centerline{$\{ \Im{\Psi_X (z)} = c  \}$,
\ \ \
for $c\in \RR$,
}

\noindent 
{\it i.e.} the horizontal 
trajectories of the orientable quadratic differential 
$\mathcal{Q}_X$. 
Whereas, the 
inversion $\Psi_X^{-1}(t)$ of the integral in
Equation \eqref{parametro-local}
%$
%z \longmapsto 
%\int^z
%\frac{d \zeta }{f(\zeta)} = t
%$
provides the
non stationary complex trajectory solutions
of the vector field $X$.

There is a natural advantage of studying 
additively automorphic 
singular complex analytic functions
$\Psi_X$ 
via the associated vector fields $X$, as seen in  
\cite{MR}, \cite{AlvarezMucino}, 
\cite{AlvarezMucino2} and \cite{AlvarezMucinoII}.
Very particular families of vector fields with 
one  essential singularity 
are considered in 
\cite{AlvarezMucino}, \cite{AlvarezMucino2} 
and 
\cite{AlvarezMucinoII}.
Meromorphic vector fields on compact Riemann surfaces
are a current subject of study, see for example in
\cite{Kilmes-Rousseau},
\cite{Dias-Garijo} and references therein.

\subsection{Local theory of vector fields}
\label{comportamiento-local}

\begin{definition}(\cite{AlvarezMucino} \S5.)
\label{sectores-angulares-de-campos}
Let $(\CW, \del{}{z})$ be the holomorphic vector field
on the Riemann sphere with a double zero at $\infty$, 
and let $\overline{\HH}^2 = \{ \Im{z} \geq 0\} \cup 
\{\infty \} \subset \CW$.

\noindent 1)
A {\it hyperbolic sector} is the vector field germ 
$H=\big( (\overline{\HH}^2,0), \del{}{z} \big)$,
as in Figure \ref{3-sectores}.c. 

\noindent 2)
An {\it elliptic sector} is the vector field germ 
$E=\big( (\overline{\HH}^2, \infty), \del{}{z} \big)$,
equivalently
$\big( (\overline{\HH}^2, 0), -w^2\del{}{w} \big)$
when $\{z \mapsto \frac{1}{z}=w\}$,
Figure \ref{3-sectores}.a.

\noindent 3)
A (right) {\it parabolic sector} is the vector field germ 

\centerline{
$
P_+ =
\big( (\{ 0 \leq  \Im{z} \leq h \} \cap\{\Re{z} >0 \}, \infty), \del{}{z} \big)$,}

\noindent 
in addition 
the (left) parabolic sector $P_{-}$ occurs when $\Re{z}<0$;  
$h \in \RR^+$ is a parameter,
see Figure \ref{3-sectores}.b.
\end{definition}

The sectors are germs of flat Riemannian manifolds with boundary provided with a complex vector field;
in \cite{AlvarezMucino} \S5, 
we describe their properties.
Thus, we  say that 
$X$ has a 
hyperbolic, elliptic or parabolic 
when $\Re{X}$ has it.  

%%%%%%%%%%%%%%%%%%%%%%%%%%%%%%%%%%%%%%
\begin{figure}[htbp]
\begin{center}
\includegraphics[width=0.5\textwidth]{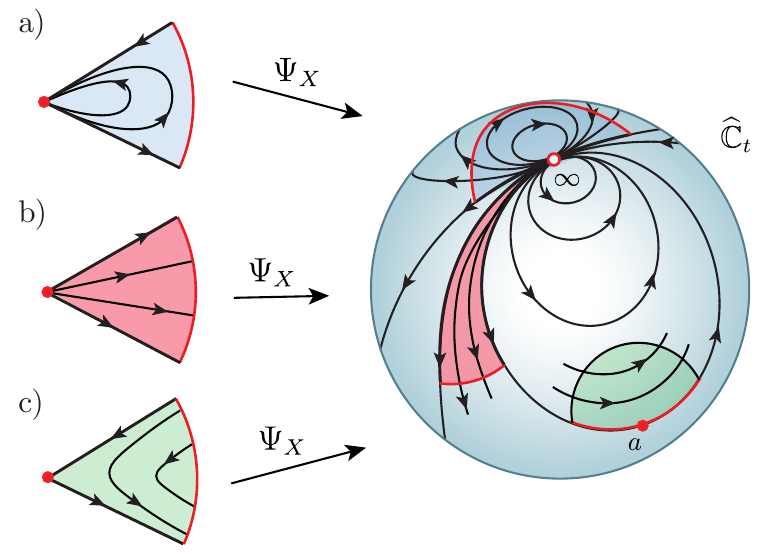}
\caption{
%3-sectores.pdf.
(a)
Elliptic $E$, (b) parabolic $P$, (c) hyperbolic $H$ sectors
of $\Re{X}$.
The left drawing sketches the sphere $(\CW_t, \del{}{t})$  
describing their embeddings under $\Psi_X$.
}
\label{3-sectores}
\end{center}
\end{figure}
%%%%%%%%%%%%%%%%%%%%%%%%%%%%%%%%%%%%%%

The following result appears 
in the theory of quadratic differentials 
\cite{Jenkins}, 
\cite{Strebel},
\cite{Ahlfors}
and in complex differential equations
%\cite{Hajek},
\cite{Hajek-1},  
\cite{Hajek-2},
\cite{Hajek-3},
\cite{Brickman-Thomas},
\cite{Needhan-King},
\cite{Garijo-Gasull-Jarque}.
See Figure \ref{2-forma-normal}.

%%%%%%%%% proposicion topologia dice como es la singularidad
\begin{proposition}[Local analytic normal forms at 
zeros and poles of $X$]
\label{la-topologia-dice-la-singularidad}
Let $\big( (\CC, z_0), X \big)$ be a germ of
a singular complex analytic vector field; in each item
the corresponding assertions are equivalent.

\smallskip 

\begin{enumerate}[label=\arabic*),leftmargin=*]

\item
i) 
$X$ is holomorphic and nonzero at $z_0$.

\noindent 
ii)
$\Re{X}$ is topologically equivalent to $\Re{\del{}{t}}$.

\noindent 
iii) Up to local biholomorphism $X$ is $\del{}{z}$.

\smallskip

\item 
i) $X$ has a zero at $z_0=q$ of multiplicity
$s \geq 1$.

\noindent 
ii) For multiplicity one
$\Re{X}$ is a
source, sink or center;
for multiplicity at least two
it
admits a decomposition with
$2s -2 \geq 2$ 
elliptic sectors and 
zero or one parabolic sectors.

\noindent 
iii) Up to local biholomorphism $X$ is 
$\frac{(z-q)^s}{\lambda (z-q)^{s-1} - (s-1)} \del{}{z}$,
$\lambda \in \CC$.

\smallskip

\item 
i) $X$ has
a pole at $z_0=p$ of multiplicity $-k \leq -1$.

\noindent 
ii) $\Re{X}$ admits a decomposition 
with
$2k +2$ hyperbolic sectors. 

\noindent 
iii) Up to local biholomorphism $X$ is 
$\frac{1}{(z-p)^k} \del{}{z}$.

\smallskip

\item 
i) $X$ has an essential singularity
at $z_0=e$. 

\noindent 
ii) $\Re{X}$ has any other topology different from 
(1)--(3).

\end{enumerate}
\end{proposition}

%%%%%%%%%%%%%%%%%%%%%%%%%%%%%%%%%%%%%%
\begin{figure}[htbp]
\begin{center}
\includegraphics[width=0.75\textwidth]{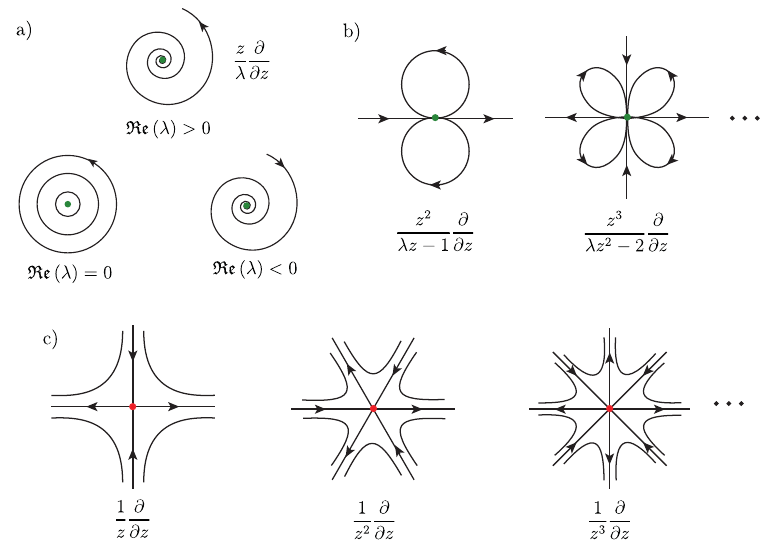}
\caption{
%2-forma-normal.pdf
Local analytic normal forms: 
a) simple zeros, 
b) multiple zeros, 
and c) poles of $X$.
By simplicity, zeros and poles are at the origin.
}
\label{2-forma-normal}
\end{center}
\end{figure}
%%%%%%%%%%%%%%%%%%%%%%%%%%%%%%%%%%%%%

\begin{proof}
In assertions (1)--(4), 
$X$ is assumed to be holomorphic and nonzero
in a punctured disk $D(z_0, \rho) \backslash \{z_0\}$.
In (2), 
a parabolic sector appears
if and only if $Res(\omega_X, z_0) \in \CC \backslash \RR$,
for further details see \cite{AlvarezMucino} \S5.
\end{proof}

\section{Singularities of $\Psi^{-1}_X$: ideal points of $M \backslash \mathcal{S}$}\label{asymptoticvalues}

The work of F. Iversen \cite{Iversen} originates 
the study of transcendental singularities
of meromorphic functions, 
and modern expositions  can be found in
W. Bergweiler {\it et al.} \cite{BergweilerEremenko} 
and A. Eremenko \cite{EremenkoReview}. 
In this theory,  
the inverse function $\Psi_X^{-1}$ and 
the Riemann surface $\R_X$ 
play an essential role.

\begin{remark}
We consider three families functions on $M$:

\noindent $\bigcdot$
single--valued 
additively automorphic
singular complex analytic functions,

\noindent $\bigcdot$
multivalued additively automorphic 
singular complex analytic functions and

\noindent $\bigcdot$ 
nonadditively automorphic multivalued singular complex analytic functions.
 
\noindent  
The first two families are studied in \S 
\ref{caso-univaluado-para-Psi}--\ref{caso-widetilde-Psi}, 
below.
The third family does not appear when we deal
with vector fields, see comment on 
\S \ref{Future-work-subseccion}.
\end{remark}

\subsection{Single--valued additively automorphic $\Psi_X$}
\label{caso-univaluado-para-Psi}

In this section, we shall consider 
a singular complex analytic 1--form of time 
$\omega_X$ with
$\mathcal{S}_{R}=\varnothing$, hence
the domain is
$M \backslash \mathbb{E}$.
In other words,

\begin{equation}
\label{Psi-sec-3-1}
\Psi_X (z)=
\int^z_{z_{\tt o}} \omega_X:
M \backslash \mathbb{E}
\longrightarrow \CW_t, 
\ \ \ 
\mathcal{S}_{R}=\varnothing,
\end{equation}

\noindent 
is a
single--valued additively automorphic singular complex 
analytic function,
where the initial point of integration is
a nonsingular point
$z_{\tt o}  \in M \backslash \mathcal{S}$.
The integral function in
Equation \eqref{Psi-sec-3-1} is a particular case of
\eqref{parametro-local}.

\begin{definition}[\cite{Iversen}, 
\cite{BergweilerEremenko},
\cite{EremenkoReview}]
\label{eremenko1}
Take $a\in\CW_{t}$ and denote by 
$D(a,\rho) \subset \CW_t$
the disk of radius $\rho > 0$ (in the spherical metric) 
centered at $a$. 
For every 
$\rho > 0$, choose a component 
$U_a(\rho)\subset M$ of  
$\Psi_X^{-1}(D(a,\rho))$ in such a way that 
$\rho_{1} < \rho_{2}$ implies $U_{a}(\rho_1) \subset U_{a}(\rho_{2})$. 
Note that the function $U_{a} : \rho \to U_{a}(\rho)$ 
is completely determined by its germ at 0. 

\noindent
The two possibilities below can occur for the germ of $U_{a}$.
\begin{enumerate}[label=\arabic*),leftmargin=*] 
\item 
$\cap_{\rho>0} U_{a}(\rho)=
\{z_k \},
\, z_k\in M$. 
In this case, $a=\Psi_X(z_k )$. 

\noindent
Moreover, if $a\in\CC_{t}$ and $\Psi_X'( z_k )\neq0$, 
or 
$a = \infty$ and $ z_k $ is a simple pole of $\Psi_X$, 
then $z_k$ is called an \emph{ordinary point}. 

\noindent
On the other hand, 
if $a\in\CC_{t}$ and $\Psi_X'(z_k) = 0$, 
or 
if $a = \infty$ and $z_k$ is a multiple pole of $\Psi_X$, 
then $z_k$ is called a \emph{critical point} and $a$ 
is called a \emph{critical value} 
of $\Psi_X$. 
We also say that the critical point $z_k$ \emph{lies over $a$}.
In this case, 
$U_{a} : \rho \to U_{a}(\rho)$ defines an \emph{algebraic singularity of $\Psi_{X}^{-1}$}.

\item $\cap_{\rho>0}U_{a}(\rho) = \varnothing$. 
We then say that our choice $\rho \to
U_{a}(\rho)$ defines a \emph{transcendental 
singularity of $\Psi^{-1}_X$}
and that the transcendental singularity 
$U_{a}$ \emph{lies over $a$}. 
\end{enumerate}

\noindent 
In both cases, 
the open set $U_{a}(\rho) \subset M$ is called a 
\emph{neighbourhood of the  singularity $U_{a}$}. 
Therefore, 
when $\zeta_{m} \in M$, 
we say that $\zeta_{m} \to U_{a}$ 
if for every $\rho>0$ 
there exists $m_{0} \in \NN$ such that 
$\zeta_{m} \in U_{a}(\rho)$, for $m \geq m_{0}$.
\end{definition}

\begin{remark}
\label{remark-a-eremenko1}
The germ $U_a$ of Definition \ref{eremenko1} case (2) can be understood as follows.

\noindent
1.
A transcendental singularity of 
$\Psi_X^{-1}$, namely $U_{a}$,
is equivalent to the addition of an 
\emph{ideal point $U_a$} to 
$M \backslash \mathbb{E}$.

\noindent
2.
The addition of the ideal points $\{ U_a \}$, 
together with their corresponding neighbourhoods 
$\{ U_a(\rho) \} \subset M$,
provide a Hausdorff completion/compactification of 
$M \backslash \mathbb{E}$,
see \cite{Ahlfors-Sario} Ch.\,I\,\S\,6 for the general
construction.

\noindent 
In our framework, 
the families of functions 
$\Psi_X(z)= 
\int^z \big( P(\zeta)/Q(\zeta)\big) \e^{-E(\zeta)} d\zeta$, 
in Theorem \ref{familias-s-r-d}, provide 
prototypes of this kind of compactification,
even in the multivalued case. 

\noindent
3. 
In what follows, 
we shall interchangeably
refer to a \emph{transcendental singularity 
$U_a$ of $\Psi_X^{-1}$} or
an \emph{ideal point $U_a$ of $M \backslash \mathbb{E}$}.
\end{remark}

Let $z_{\tt s} \in \mathcal{S} \subset M$,
the expression $z$ tends to $ z_{\tt s} \in M$ 
makes sense.
Recalling \cite{EremenkoReview} p.~3,
the following concept is natural.

\begin{definition}\label{defasymptoticvaluepath}
1) 
Let $U_a$ be a transcendental singularity of $\Psi_X^{-1}$.
An \emph{asymptotic value $a\in\CW_t$ of $\Psi_X$}
means that there exists 
a $C^1$ 
\emph{asymptotic path}
$\alpha_a ( {\tt t} ):
[0, \infty) \longrightarrow M$,
$\alpha_a(0)=z_{\tt o}  \in M \backslash \mathcal{S}$,
tending to 
$ z_{\tt s} \in M$ with well defined slope, such that  
\begin{equation}\label{valasintPsi}
a=
\lim_{
{\tt t}  \to \infty
}
\Psi_X ( \alpha_a ( {\tt t} ) )=
\lim_{ {\tt t} \to \infty } 
\int_{\alpha_a ( {\tt t} )}
\omega_X 
\in\CW_t .
\end{equation}
\noindent

\noindent 
We shall not distinguish between 
individual members $\alpha_a$ of
the class of asymptotic
paths $[\alpha_a]$ giving rise to the 
same transcendental singularity $U_a$ over $a$  
of $\Psi^{-1}_X$. 

\noindent 2)
A pair $(\alpha_a, a)$ is a 
\emph{branch point of $\R_X$}.
\end{definition}

\begin{remark}
Because of Lemma \ref{Lemma-RX}.3, 
we will assume that
the asymptotic path in Definition
\ref{defasymptoticvaluepath} ends at the 
singular point $ z_{\tt s}$. 

\noindent 
1. There is a bijective correspondence between 
the following:

\noindent
i) 
classes $[\alpha_{a}( {\tt t} )]$ 
of asymptotic\footnote{
A slight abuse of notation is made here, when $U_a$ is algebraic
(\emph{i.e.} $z_{\tt s} \in \mathcal{P} \cup \mathcal{Z}_0$), 
the path $\alpha_a({\tt t})\to z_{\tt s}$ is not an asymptotic path, 
it is just a path arriving to the critical point $z_{\tt s}$.
}  paths $\alpha( {\tt t} )$,

\noindent
ii) asymptotic values $a\in\CW_t$ counted with multiplicity, 

\noindent
iii) transcendental singularities $U_a$ of $\Psi^{-1}_X$ and

\noindent
iv) branch points\footnote{
In the particular case of algebraic branch points arising from poles $p_k$
of $X$, we shall use the notation $(p_k,\widetilde{p}_k)$, 
instead of the more cumbersome $(\alpha_{\widetilde{p}_k},\widetilde{p}_k)$, since
in this case $\lim\limits_{{\tt t}\to\infty} \alpha_{\widetilde{p}_k}({\tt t}) = p_k$ and
$\lim\limits_{{\tt t}\to\infty} \Psi_X\big(\alpha_{\widetilde{p}_k}({\tt t})\big)=
\widetilde{p}_k$; see Example \ref{Ejemplo-ExpSin-z}. 
Similarly, we shall use $(q_k,\infty)$ for the branch points associated 
to the zeros $q_k$ of $X$.
} $(\alpha_a, a)$ of $\R_X$.

\noindent 2.
Certainly, the notation $U_a$ can be confusing for
singular values $a$ with multiplicity two or more;
in those cases we add a subscript
$ a_\sigma $, in order to distinguish them.
\end{remark}
  
\begin{definition}
\label{singular-values}
The \emph{singular values 
of $\Psi_X$} are the
critical values and asymptotic values, 
both counted with multiplicity.  
\end{definition}

If $a\in\CW_t$ 
is an asymptotic value of $\Psi_X$, then 
there is at least one transcendental singularity $U_a$ 
of $\Psi_X^{-1}$ over $a$.
Certainly, there can be 
finite or even infinite 
different transcendental singularities 
as well as critical and ordinary points over the same 
singular value $a$.

\begin{remark}[On the finitude of the set of asymptotic values]

\noindent 
1. The Denjoy--Carleman--Ahlfors theorem provides a sharp estimate 
for the number of asymptotic values 
when $M=\CW_z$.
If $\Psi_X$ is an entire function with
$d$ finite asymptotic values, then
the order of growth

\centerline{
$
\limsup_{\, r \to \infty} 
\dfrac{\log M(r)}{\log r} = d,
$
} 

\noindent 
where as usual $M(r) = \max_{\vert z\vert = r } \vert \Psi_X(z)\vert$.
Compare with \cite{Segal} \S5.2. 
In fact, 
the order of growth is a valuable local 
analytic invariant, 
see \cite{Nevanlinna1} for single--valued functions. 
In \cite{AlvarezMucino}, we consider
this invariant for vector fields, study 
some families and relate it to the number of
asymptotic values.  

\noindent 
2. On the other hand, 
there exist single--valued transcendental meromorphic 
functions on $\CC_z$ with an infinite set of 
asymptotic values. 
See W. Gross \cite{Gross} and 
A.~Eremenko \cite{EremenkoReview} \S4. 

\end{remark}

\begin{definition}
\label{direct-indirecta-logaritmica}
A transcendental singularity $U_a$ of $\Psi_X^{-1}$ over $a$ is 

\noindent
1)  
\emph{direct} 
if there exists $\rho > 0$ such that 
$\Psi_X(z)\neq a$ 
for $z \in U_a(\rho)$, this is also true for all smaller values of $\rho$,

\noindent
2) 
\emph{indirect} if it is not direct, 
{\it i.e.} for every $\rho > 0$, 
the function $\Psi_X$ takes the value $a$ in $U_a(\rho)$, 
in which case the function $\Psi_X$ takes the value $a$ infinitely often in $U_a(\rho)$,

\noindent 
3)  
\emph{logarithmic singularity 
over $a$} 
if 

\centerline{
$\Psi_X : U_a (\rho) \subset M \longrightarrow
D(a, \rho) \backslash\{a\} \subset \CW_t$} 

\noindent 
is a universal covering for small enough
$\rho $. 
\end{definition}

Naturally, logarithmic singularities are direct. 
We shall use ``nonlogarithmic'' without the ``direct'' adjective when referring to
direct nonlogarithmic as well as indirect singularities.

\begin{example}\label{singularidad-logaritmica}
The simplest case of direct singularities arises from 

\centerline{
$\Psi_X(z) = 
{\displaystyle \int^z } \e^{-\zeta} d\zeta: \CC_z
\longrightarrow %D(0, 1) 
\CC_t \backslash \{ 0 \}$.}

\noindent
There are logarithmic singularities over 
the asymptotic values $0, \infty\in\CW_t$ 
respectively.
For small enough $\rho>0$, 
the neighbourhoods $U_0(\rho)$ and $U_\infty(\rho)$ are 
\emph{exponential tracts}.
We illustrate this in 
Figure \ref{flores-elipticas-hiperbolicas}.a.
\end{example}

\subsection{Multivalued additively automorphic $\Psi_X$; the fundamental domain $\Lambda$}
\label{caso-multivaluado-para-Psi}
Consider a
\emph{multivalued additively automorphic}
singular complex 
analytic function  
\begin{equation}
\label{Psi-sec-3-2}
\Psi_X (z)=
\int^z_{z_{\tt o}} \omega_X :
M \backslash 
(\mathbb{E} \cup \mathcal{Z}_R)
\longrightarrow \CW_t, 
\quad
\mathcal{S}_{R}\neq\varnothing,
\end{equation}
\noindent 
where 
the initial point of integration is
a nonsingular point
$z_{\tt o}  \in M \backslash \mathcal{S}$.
The integral function in
Equation \eqref{Psi-sec-3-2} is a particular case of
\eqref{parametro-local}. 

One of the \emph{fundamental hurdles in studying 
multivalued additively automorphic functions
\eqref{Psi-sec-3-2}
\`a la Iversen}, 
Definition \ref{eremenko1},
is that the neighbourhoods $U_a(\rho) = \Psi_{X}^{-1} (D(a,\rho))$ 
are not useful for distinguishing the ideal points $U_a$.
For the sake of clarity, we describe the simplest object
where this occurs.

\begin{example}[Singular points with nonzero residue]
\label{example-nonzero-residue}
Let us consider the multivalued
additively automorphic
singular complex analytic function

\centerline{
$\Psi_{X}(z)=
\lambda
\log(z) + C =
\lambda
{\displaystyle \int^{z} } \dfrac{d\zeta}{\zeta}
:
\CW_z\backslash\{0,\infty\}\longrightarrow\CW_t,
\ \ \ \lambda\in\CC^*$.
}

\noindent 
The associated $\omega_X$ 
on $\CW_z$
has nonzero residues at 

\centerline{
$\mathcal{S}=\mathcal{Z}_{R}=\{0,  \, \infty\}$.}

\noindent
On the other hand, $\Psi_{X}^{-1}(t)=\exp(t/\lambda )$
is an entire function that
has an isolated essential singularity at $\infty\in\CW_{t}$.
As a consequence of Picard's theorem
applied to $\Psi_X^{-1}$, for any $\rho>0$ the neighbourhood 
$U_{\infty}(\rho)\doteq \Psi_{X}^{-1}\big(D(\infty,\rho)\big)$ 
is $\CW_z\backslash\{0,\infty\}$.

\noindent
In its original setting,
Iversen's theory of transcendental singularities 
does not make sense at 
$0,\infty\in\CW_z$, which are 
poles of  the associated $\omega_X$.

\noindent 
In other words, 
\emph{for every $\rho>0$ the neighbourhoods $U_\infty (\rho)$
are all the same,
and consequently the ideal points $U_\infty$, are not well defined.}
As will be seen in 
\S\ref{caso-widetilde-Psi}, this can be 
explained 
by considering the universal cover  $\CC$ of
$\CW_z \backslash  \mathcal{Z}_{R}$.

\noindent
The analogous behaviour of $\Psi_X^{-1}$
appears for many other families of 
functions, \emph{e.g.}  

\centerline{
$\Psi_X(z)= 
{\displaystyle  \int^z} \frac{P(\zeta)}{Q(\zeta)}\e^{-E(\zeta)} \, d\zeta$,}

\noindent 
assuming that their 1--forms of time have 
nonzero residues, see Theorem \ref{familias-s-r-d}. 
\end{example}

\subsubsection{Construction of a fundamental 
domain for $\Psi_X$}
\label{construccion-region-fundamental}
In order to extend 
Iversen's theory of singularities of the inverse 
function to multivalued 
additively automorphic singular complex analytic functions 
$\Psi_X$,
note that in Diagram \ref{diagramaRX},
the function
$\Psi_{X} = \pi_{1}^{-1}\circ\pi_{2}$
factors through $\R_{X}$.
Very roughly speaking,
for $\Psi_X$ as in Equation \eqref{Psi-sec-3-2},
we search for a 
maximal univalence domain $\Lambda$ for $\Psi_X$ 
(\emph{i.e.} where $\Psi_X$ is defined and single--valued).
Recalling Remark \ref{cuando-Psi-es-multivaluada}, 
we proceed as follows.

\smallskip

\noindent 
Let $\Psi_X$ be as in Equation \eqref{Psi-sec-3-2}.

\noindent
{\bf 1.} 
Assume first that $M=\CW_z$ or the disk $\Delta_z$.
(If $M=\CC_z$, 
by adding the conformal puncture $\infty$,
we obtain $\CW_z$.) 
Let $\mathcal{S}_R$ be 
the set of nonzero residue 
singular points of $\omega_X$;
assume by hypothesis that its
cardinality is $2 \leq \kappa \leq \infty$ 
(possibly infinite and numerable),
{\it i.e.}

\centerline{
$ \mathcal{S}_R
=\{ z_1, z_2,\ldots,z_\kappa \}$.}

\noindent
{\bf 2.}
Assume that we have a
collection of paths 
$\Gamma = \{ \gamma_k \}_{k=1}^{\kappa-1}$,
where $\gamma_k$ 
is the segment of $\Gamma$ 
between $z_k$ and $z_{k+1}$
satisfying the following:

\noindent i)
Each $\gamma_k\subset M$ is a 
continuous simple path with extreme points in 
$\mathcal{S}_R$
and avoids other singular 
points in $\mathcal{S}$. 

\noindent ii)
For $k \neq \ell$, 
the intersection $\gamma_k \cap \gamma_\ell$
is either one point $z_{\ell}$ 
(when $\ell = k+1$ or $k-1$) in 
$\mathcal{S}_R$
or is empty otherwise.

\noindent iii) 
The set

\centerline{$M \backslash 
\overline{\Gamma }$}

\noindent 
is an open connected Riemann surface, where 
$\overline{(\ \ )}$ means the closure in $M$.
Note that $\omega_X$ is still a singular complex analytic 1--form on 
$M \backslash\overline{\Gamma}$
with singular set $\mathcal{S}\backslash\overline{\mathcal{S}_R}$.

\noindent
{\bf 3.} 
As usual, if we
cut $M$ along $\gamma_k$, 
we obtain two boundary paths, say 
$\gamma_{k+ } $
and 
$\gamma_{k-} $, 
which are considered without their extreme points
$z_k$ and $z_{k+1}$.
We define

\centerline{
$\Lambda_0 \doteq 
\big( M\backslash \overline{\Gamma} \big) \,
\bigcup_{k=1}^\kappa 
\gamma_{k+}
$.
}

\noindent 
Simply stated, 
we add to the open surface 
$M  \backslash \overline{\Gamma}$  
only one boundary component $\gamma_{k+}$
for each path $\gamma_k$. 

\noindent 
{\bf 4.} 
In the case $M\neq\CW_z, \, \Delta_z$,
then $M$ is not simply connected and
we require an additional construction.
Let
$\{\widetilde{\gamma}_\ell\}_{\ell=1}^L
\subset M\backslash\mathcal{S}$ 
be representatives of the generators of 
the fundamental group $\pi_1( \Lambda_0 )$.
Note that $\widetilde{\gamma}_\ell$ are 
simple closed paths in 
$\Lambda_{0}$.
Hence, 
cutting $\Lambda_{0}$ along the paths 
$\{\widetilde{\gamma}_\ell\}_{\ell=1}^L$ and 
once again
adding only
one of their boundary components 
$\{\widetilde{\gamma}_{\ell+}\}_{\ell=1}^L$,
we obtain 

\centerline{
%\label{Lambda-dominio-fundamental}
$\Lambda =
\big( \Lambda_{0} \backslash 
(\cup _{\ell=1}^L \widetilde{\gamma}_\ell ) \big)
\
\bigcup _{\ell=1}^L \widetilde{\gamma}_{\ell+}
$,
}

\noindent
\emph{a fundamental domain for $\Psi_X$}.

\begin{remark}\label{No-necesario-evitar-polos}
1.
Considering $\omega_X$, note that   
$\Lambda \cap \mathcal{S}$ contains its 

\noindent
$\bigcdot$
zeros $\mathcal{P}$ and

\noindent
$\bigcdot$
poles with residue zero $\mathcal{Z}_0$.

\noindent
Furthermore, $\overline{\mathcal{S}_R}$ is in the boundary of $\Lambda$.

\noindent 2.
By construction, $\Lambda$ is simply
connected, has nonempty boundary and
$\int_\beta \omega_X=0$ for any closed path 
$\beta$ in the locus where $\omega_X$ is holomorphic.
The restriction of $\Psi_X$ in 
Equations \eqref{parametro-local} and 
\eqref{Psi-sec-3-2}, 
\begin{equation}
\label{la-Psi-en-Lambda}
\Psi_{X, \, \Lambda}(z) =
\int^{z}_{z_{\tt o}} \omega_X :  
\Lambda
\backslash\mathbb{E}
\longrightarrow \CW_t
\end{equation}

\noindent 
is a single--valued 
singular complex analytic
function with singular set 
$\mathcal{S}\backslash \overline{\mathcal{S}_R}$
(note that $\mathbb{E} \cup \mathcal{Z}_R=\mathbb{E} \cup \mathcal{S}_R$).

\noindent 3.
In the construction of $\Gamma$ we have avoided 
the set of singular points, \emph{i.e.} we have
asked that $\Gamma\cap\mathcal{S}=\varnothing$.
This has been done for simplicity, 
however, 
note that $\Gamma\cap\mathcal{P}\neq\varnothing$
can be allowed (this is sometimes useful), 
since $\omega_X$ has zeros at $\mathcal{P}$ and
$\Psi_X$ is holomorphic on $\mathcal{P}$.
\end{remark}

\begin{definition}\label{region-fundamental}
A \emph{fundamental region} for a 
multivalued additively automorphic 
function $\Psi_X$ is 

\centerline{
$\Omega = 
\{ (z, \Psi_X(z)) \ \vert \   z \in 
\Lambda \backslash \mathbb{E} \} \subset 
M \times \CW_t.$
}
\end{definition}

\begin{remark} 1.
Obviously, a fundamental region
$\Omega$ depends on the choice of
$z_{\tt o}$, $\{ \gamma_k\}$  and 
$\{ \widetilde{\gamma}_\ell\}$.

\noindent
2.
The following diagram commutes
\begin{center}
\begin{picture}(180,65)(0,20)

\put(-131,40){\vbox{
\begin{equation}\label{diagrama-dominio-fundamental}\end{equation}
}}

\put(0,75){$\big(\Lambda \backslash \mathbb{E}, X \big) $}

\put(115,75){$\big( \Omega,\pi^*_2(\del{}{t})\big)$}

\put(108,78){\vector(-1,0){60}}
\put(75,85){$\pi_{1}|_{\Omega}$}

\put(133,65){\vector(0,-1){30}}
\put(138,47){$ \pi_{2} $ }

\put(38,65){\vector(2,-1){73}}
\put(50,39){$ \Psi_{X,\, \Lambda} $}

\put(115,20){$\big(\CW_t,\del{}{t}\big) $,}

\end{picture}
\end{center}
\noindent 
where 
$\pi_{1}|_{\Omega}$ and $\pi_2$ are local isometries.
The fundamental domain $\Lambda \backslash \mathbb{E}$
and the fundamental region $\Omega$
are biholomorphic under $\pi_{1}|_{\Omega}$.
Note that $ \Psi_{X,\, \Lambda}^{-1} =
\pi_{1}|_{\Omega}\circ\pi_{2}^{-1}$.
\end{remark}
%%%%%%%%%%%%%%%%%%%%%%%%%

Since $\Psi_X$ is single--valued on 
$\Lambda$,
we proceed to slightly modify all the concepts
in \S \ref{caso-univaluado-para-Psi}, 
by using $\Lambda$ instead of $M$. 
Let 
$\alpha_a({\tt t}): [0, \infty) \longrightarrow \Lambda$
be an asymptotic path, analogously
as in Equation \eqref{valasintPsi}
in Definition \ref{asymptoticvalues},
so $(\alpha_a,a)$ is the branch point in $\R_X$ 
corresponding to the path $\alpha_a({\tt t})\to z_{\tt s}$, and

\centerline{$
a =
\lim\limits_{{\tt t}\to\infty} 
\Psi_{X,\, \Lambda} \big( \alpha( {\tt t} ) \big) 
=
\lim_{ {\tt t} \to \infty } 
{\displaystyle \int_{\alpha_a ( {\tt t} )} }
\omega_X 
\in \CW_t .
$}

\begin{definition}[Extension to the additively automorphic case]
\label{Nueva-def-vecindades}
Let $\Psi_X$ be as in \eqref{Psi-sec-3-2} and 
the function  $\Psi_{X,\Lambda}$,  
which depends on the choice of $\Lambda$
be as in \eqref{la-Psi-en-Lambda}. 
Take $a \in \CW_t$
and denote by $D(a,\rho) \subset \CW_t$
the disk of radius $\rho > 0$ (in the spherical metric) 
centered at $a$. 
For every $\rho>0$, first choose 
a connected component 

\centerline{
$V \big( (\alpha_a,a),\rho \big) \subset \R_X$ 
\ of \
$ \pi_2 ^{-1}\big(D(a,\rho)\big)$,}

\noindent 
and,
using Diagram \ref{diagrama-dominio-fundamental},
$\pi_{1}|_{\Omega}$ is the restriction of $\pi_1$ to $\Omega$,
then let

\centerline{
$U_{a}(\rho) \doteq \pi_{1}|_{\Omega} \Big( V \big( (\alpha_a,a),\rho \big) \Big)$,
}

\noindent 
in such a way that $\rho_1<\rho_2$ implies $U_{a}(\rho_1) \subset U_{a}(\rho_2)$. 
The \emph{neighbourhoods $U_a(\rho)$}
determine
\emph{ideal points or singularities $U_a$ of 
$\Psi_{X,\Lambda}^{-1}$}. 

\noindent
With the above considerations, all 
the definitions and results presented in 
\S\ref{caso-univaluado-para-Psi} apply for 
$\Psi_{X,\, \Lambda}$.
\end{definition}

\begin{remark}[Some consequences of 
the multivalued nature of $\Psi_X$]
\label{aclaraciones-de-vecindad}
1. In our construction of the neighbourhood $U_a(\rho)$, 
there is a choice of one 
connected component of $\pi_2 ^{-1}\big(D(a,\rho)\big)$.
However, 
due to the choice of $\Gamma$,
the projection 
$U_a(\rho)= \pi_{1}|_{\Omega}
\Big( V \big( (\alpha_a,a),\rho \big) \Big) $
can have an arbitrary number of connected components.
For instance, in Examples \ref{Ejemplo-multivaluado} and 
\ref{ejemplo-3-puntos} 
(Figures \ref{flujo} and \ref{campos-Delta}.a),
if the paths $\gamma_k \subset\Gamma$ are chosen to lie on the
real axis, then at least one of the neighbourhoods $U_a(\rho)$ of 
the transcendental singularities corresponding to the essential singularity
would have two connected components.

\noindent
2. 
Note that, when $\mathcal{Z}_R\neq\varnothing$ a new type of
transcendental singularity of $\Psi_{X,\,\Lambda}^{-1}$ 
appears: ideal points $U_\infty$ arising from the
non zero residue poles of $\omega_X$, see example
below.
\end{remark}

The following definitions are natural.

\begin{definition}
\label{definicion-singularidades-en-Lambda}
Assume that
there exists an asymptotic path
$\alpha({\tt t})$
in $\Lambda$
tending to a singularity 
$z_{\tt s} \in \mathcal{S}$ of $\omega_X$ 
with asymptotic value $a$. 
The respective $U_{a}$ is:

\begin{enumerate}[label=\arabic*),leftmargin=*]

\item
An \emph{essential transcendental singularity of 
$\Psi_{X,\, \Lambda}^{-1}$}, 
when $z_{\tt s} \in \mathbb{E}$.

\begin{enumerate}[label=\roman*),leftmargin=*]

\item
A \emph{zero residue essential transcendental singularity of
$\Psi_{X,\, \Lambda}^{-1}$}, 
when $z_{\tt s} \in \mathbb{E}_{0}$.
\end{enumerate}

\item 
A \emph{nonzero residue
transcendental singularity of
$\Psi_{X,\, \Lambda}^{-1}$},
when $z_{\tt s} \in \mathcal{S}_{R}=
\mathcal{Z}_{R} \cup \mathbb{E}_{R} $.

\begin{enumerate}[label=\roman*),leftmargin=*]

\item
A \emph{$\star$--transcendental singularity of
$\Psi_{X,\, \Lambda}^{-1}$},
when $z_{\tt s} \in \mathcal{Z}_{R}$.

\item
A \emph{nonzero residue essential transcendental 
singularity of
$\Psi_{X,\, \Lambda}^{-1}$},
when $z_{\tt s} \in \mathbb{E}_{R}$.
\end{enumerate}
\end{enumerate}
\end{definition}

%%%%%%%%%%%%%%%%%%%%%%%%%%%%%%%%%%%%%%
\begin{figure}[htbp]
\begin{center}
\includegraphics[width=0.55\textwidth]{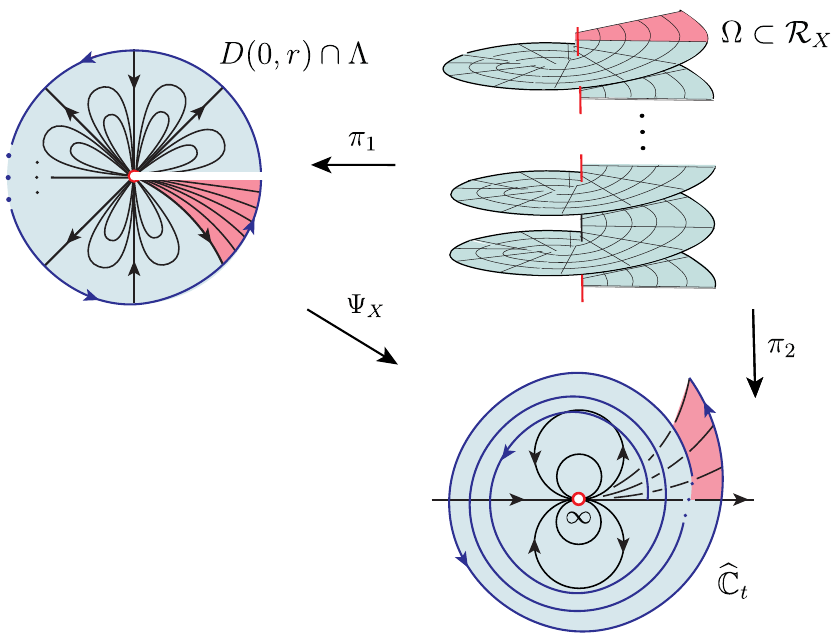}
\caption{
%cubierta-incompleta.pdf.
A pole of $\omega_X =\big((\lambda/z) - (s-1)/z^s) dz$
of order $s$ at least 2,  
and nonzero residue $\lambda$. 
The associated vector field  $X$ has 
$2s -2 \geq 2$ 
elliptic sectors and one parabolic sector. 
The behaviour of the function $\Psi_X$, Equation 
\eqref{nonzero-star--transcendental-singularity}, 
is called an 
\emph{$s$--fold unbranched holomorphic 
log--covering}.
The parabolic sector (and its corresponding images 
in $\Omega$ and $\CW_t$) has been coloured pink
for clarity.
This illustrates a  
$\star$--transcendental singularity of $\Psi_{X,\, \Lambda}^{-1}$.
}
\label{cubierta-incompleta}
\end{center}
\end{figure}
%%%%%%%%%%%%%%%%%%%%%%%%%%%%%%%%%%%%%%

\begin{example}[Example \ref{example-nonzero-residue} revisited]
Let 
\begin{equation}
\label{nonzero-star--transcendental-singularity}
\Psi_X (z) =
\lambda \log(z) + 
\frac{1}{z^{s-1}},
\ \ \ \hbox{ on } M=\CW_z, 
\ s \geq 1, 
\ \lambda \in \CC^*, 
\end{equation}
\noindent 
be a multivalued additively automorphic function.
Let us consider 
$\Lambda =\CW_z \backslash (\RR^+ \cup \{ 0, \, \infty \})$. 
The fundamental region is

\centerline{
$\Omega = 
\left\{ \big(z, \int _{1}^{z} \omega_X  \big)
=
\big( z,\,  \lambda \log(z) + 
\frac{1}{z^{s-1}}  
\big)
\right\} \subset
\CW_z \times \CW_t .$}

\noindent
Note that $\Psi_X$
is the integral of the normal form of $\omega_X$ 
having a pole of multiplicity $s\geq 1$ at $z_{\tt s} =0$,
with nonzero residue.
Thus, for all paths $\alpha({\tt t}) \to 0$ in $\Lambda$, 
the asymptotic value  
of $\Psi_{X,\, \Lambda}$ is $\infty$ 
and  the corresponding 
transcendental singularity
$U_\infty$ 
of $\Psi_{X,\, \Lambda}^{-1}$
lies over $\infty$.
The neighbourhoods $U_\infty(\rho) \subset \Lambda$
contain  $D(0, r(\rho, \lambda))\cap \Lambda$, 
for suitable radius $r(\rho, \lambda)$ which tend to 0 when 
$\rho \to 0$; hence
Definition \ref{Nueva-def-vecindades}
is satisfied. 
According to Definitions \ref{direct-indirecta-logaritmica} and
\ref{definicion-singularidades-en-Lambda},
it is a direct singularity, 
which is not logarithmic;
thus $U_\infty$ is a 
\emph{$\star$--transcendental singularity of $\Psi_{X,\, \Lambda}^{-1}$}.
Figure \ref{cubierta-incompleta} illustrates
the generic behaviour of $\Psi_X$, where
the parabolic sector depending on
$\lambda$ appears; for
an accurate explanation see \cite{AlvarezMucino} \S5.

\noindent 
Note that for the singular point  $z_{\tt s} =\infty$, the study is completely analogous.
\end{example}

We obtain the following normal forms summary for poles and 
zeros of $\omega_X$:

\begin{center}
\begin{tabular}{|c|c|c|c|c|}
\hline 
&&&&\vspace{-.4cm}
\\
Singularity 
of $\Psi_X^{-1}$
& $\Psi_X(z)$ & $\omega_X(z)$ & $X(z)$ & parameters
\\
\hline
%%%%%%%% Polos de omega_X %%%%%%%
& & & & \\[-8pt]
algebraic 
for $\lambda=0 $ 
& & & & $s\geq1$,\\[-10pt]
&
$\lambda \log(z)$ 
$+\ \ \frac{1}{z^{s-1}}$ 
& 
$\left( \dfrac{\lambda}{z} - \dfrac{s-1}{z^{s}} \right) dz$
&
$\dfrac{z^{s}}{\lambda z^{s-1} - (s-1)} \ddel{}{z}$ & 
  \\
\\[-19pt]
\tabdashline & & & & \\[-9pt] 
$\star$--transcendental  
for $\lambda \neq 0$
& & & & residue $\lambda\in\CC$ \\
\hline 
&&&& \vspace{-.3cm}
\\ 
algebraic 
& $\dfrac{z^{k+1}}{k+1}$ & $z^k \, dz$ & $\dfrac{1}{z^k} \ddel{}{z}$ &
$k\geq1$ \\[10pt]
\hline
\end{tabular}
\end{center}

\begin{remark}
\label{remark-def-sing-inv}
1.\
The transcendental  singularities of $\Psi_X^{-1}$ that appear in the classical theory of 
single--valued functions $\Psi_X$,
as in Definition \ref{eremenko1}, 
are all zero residue
essential transcendental singularities, as in 
Definition \ref{definicion-singularidades-en-Lambda}.1.i.

\noindent
2.\
In Definition \ref{definicion-singularidades-en-Lambda}.2.i, 
since $z_{\tt s} \in \mathcal{Z}_{R}$,
the asymptotic value is necessarily
$a = \infty\in\CW_t$.

\noindent
3.\
Note that nonzero residue essential transcendental singularities
Definition \ref{definicion-singularidades-en-Lambda}.2.ii, 
are also essential transcendental singularities
Definition \ref{definicion-singularidades-en-Lambda}.1.
\end{remark}

An accurate description of the singular values 
for $\Psi_{X, \, \Lambda}$ 
is required.
In Definition \ref{Nueva-def-vecindades} 
there is a choice of 
fundamental domain $\Lambda$,
equivalently of a fundamental region 
$\Omega\subset\R_X$.
This actually makes a difference
on what is considered a singular value
of $\Psi_{X,\Lambda}$.

Let $\Psi_X$ be a multivalued 
additively automorphic singular complex 
analytic function, 
Equation \eqref{Psi-sec-3-2}, 
and
assume that $a\in\CW_t$ is a singular value 
of $\Psi_{X,\, \Lambda}$
with asymptotic path 
$\alpha_a({\tt t})\subset\Lambda$.

\noindent 
Now, consider a path or class  
$\varrho \in \pi_1 
(M \backslash \overline{\mathcal{S}_R} ) $,
that starts at the 
nonsingular point
$z_{\tt o} \in M\backslash\mathcal{S}$, defining 

\centerline{
$\Xi_\varrho \doteq 
{\displaystyle \int_\varrho} \omega_X $.}

\noindent
Then

\centerline{
%\label{infinidad-valores-integral}
$\lim\limits_{{\tt t}\to\infty} 
\left( 
\int_{\varrho} \omega_X 
+
\int_{\alpha_a(\tt t)} \omega_X 
\right)= 
\Xi_\varrho +a$.
}

\noindent
In other words, 
the linear combinations 
$\Xi_\varrho  \in \CC^*$
of
residues and periods of $\omega_X$,
determine an infinite collection
$\{a + \Xi_\varrho \} \subset \CC$ consisting of: 

\noindent
i)
a singular value $a\in\CW_t$ 
of $\Psi_{X,\, \Lambda}$
and 

\noindent
ii) an infinite number
of \emph{fake} singular values, one for each possible 
nonzero linear combination $\Xi_\varrho$.

\noindent
Of course, 
the only true singular value 
of $\Psi_{X,\, \Lambda}$
is $a$, since the paths $\varrho$ 
concatenated with $\alpha_a$ 
do not lie in $\Lambda$ unless $\varrho$ is 
homotopic to the identity.

%%%%%%%%%%%%%%%%%%%%
\begin{proposition}[Configurations of singular values 
amongst fundamental regions]
\label{teo-conf-valores}
Let $\Psi_X: 
M \backslash 
(\mathbb{E} \cup \mathcal{Z}_{R})
\longrightarrow \CW_t$ 
be a multivalued additively automorphic singular 
complex analytic function as in \eqref{Psi-sec-3-2}. 

\noindent
1) 
Given any two fundamental regions $\Omega_1$
and $\Omega_2$, 
the singular values
$\{a_j \}$ and
$\{\widetilde{a}_j \}$ 
of $\Psi_{X,\, \Lambda_1}$
and 
$\Psi_{X,\, \Lambda_2}$ respectively,
satisfy 

\centerline{
$a_j = \widetilde{a}_j + 
\Xi _{\varrho} 
$,\quad  
for $j=1,\ldots,m$
or 
infinite and numerable  $\{ j\}$,
}

\noindent
where $\Xi_{\varrho}\in\CC$ 
is a fixed linear combination of the 
residues and periods of 
$\omega_X$, 
that depends  
on the choice of the two 
fundamental regions 
$\Omega_1$, $\Omega_2$ and
on the initial point of integration 
$z_{\tt o}$ for $\Psi_X$.

\noindent
2) 
The qualitative behaviour
of the ideal points $U_a$ associated to 
$\Psi_{X,\, \Lambda_1}$
and $U_{a+\Xi_{\varrho } }$ associated to 
$\Psi_{X,\, \Lambda_2}$
is independent of the 
choice of fundamental regions $\Omega_1$ or $\Omega_2$.
\end{proposition}

\begin{proof}
For (1), 
given two different fundamental regions, 
say $\Omega_1$ and $\Omega_2$,
the corresponding $\Lambda_1=\pi_1(\Omega_1)$ and $\Lambda_2=\pi_1(\Omega_2)$
are simply connected subsets of the universal cover.
There exists an element $\varrho$ 
of the fundamental group 
$\pi_1 (M \backslash 
\overline{\mathcal{S}_R} )$
such that $\Lambda_2=\varrho (\Lambda_1)$
as a cover transformation.
Therefore, given a singular value $a\in\CW_t$ of 
$\Psi_{X,\, \Lambda_1}$,
the value $a+\Xi_{\varrho} \in\CW_t$ is the corresponding 
singular value of 
$\Psi_{X,\, \Lambda_2}$.

For (2), note that 
$\Psi_{X,\, \Lambda_1}$ and $\Psi_{X,\, \Lambda_2}$
differ only by the value
$\Xi_{\varrho}\in\CC_t$. 
However, since $\Omega_1$
and $\Omega_2$ are 
copies of each other
(up to cutting and pasting and
using the flat metric $g_X$ on $\R_X$ 
arising from $\pi_2 ^* (\del{}{t})$). 
Then,
the branch points associated to $U_a$ in 
$(\Omega_1, \pi_2 ^* (\del{}{t}))$
and $U_{a+\Xi_{\varrho}}$ in 
$(\Omega_2, \pi_2 ^* (\del{}{t}))$ are 
related by the cover transformation $\varrho$.
Hence the ideal points arising from
either $\Lambda_1$ or $\Lambda_2$ are qualitatively
the same.
\end{proof}

%%%%%%%%%%%%%%%%%%%%%%%%%%%%%%%%%%%%%%%%%

\subsection{A model for $\R_X$; the universal cover of $M \backslash \overline{\mathcal{S}_R}$}
\label{caso-widetilde-Psi}
Once again, we consider a
multivalued additively automorphic singular complex 
analytic function as in \eqref{Psi-sec-3-2}, namely

\centerline{
$\Psi_X (z)=
{\displaystyle \int^z_{z_{\tt o}} } 
\omega_X:
M \backslash (\mathbb{E} \cup \mathcal{Z}_{R}) 
\longrightarrow \CW_t, 
\quad
\mathcal{S}_{R}\neq\varnothing,$
}

\noindent 
where 
the initial point of integration is
a nonsingular point
$z_{\tt o}  \in M \backslash \mathcal{S}$.
Let  

\centerline{
$\pi:
\mathfrak{M} \longrightarrow 
M \backslash 
\overline{\mathcal{S}_R} $
}

\noindent 
be the universal cover 
of 
$M \backslash \overline{\mathcal{S}_R}$.
The analytic extension 
of $\Psi_X$ to $\mathfrak{M}$,
namely

\centerline{
$\widetilde{\Psi_X}(z)=
{\displaystyle \int^z }  \pi^* \omega_X: 
\mathfrak{M} \backslash \pi^{-1} (\mathbb{E}_0)
\longrightarrow \CW,$
}

\noindent 
is a single--valued additively automorphic
singular complex analytic function,
thus \S\ref{caso-univaluado-para-Psi} applies.
As a mater of record,

\centerline{
$\widetilde{X}= \pi^* X $ 
\ on \ 
$\mathfrak{M}$
}

\noindent 
denotes the singular complex analytic vector field
associated to $\widetilde{\Psi_X}$.
Moreover, by Lemma \ref{Lemma-RX}.2, we have that

\centerline{
%\label{RX-como-cubierta-universal}
$\R_X = 
\cup_\varrho \Omega_\varrho
\cong \mathfrak{M}, 
\ \ \   
\varrho \in 
\pi_1  (
M \backslash \overline{\mathcal{S}_R} )$.
}

\noindent 
In fact, the surface 
$\R_X\subset M\times\CW_t$ 
in \eqref{R_X-definicion},
can be reconstructed 
by using copies of the fundamental region
$\Omega$ by
the analytical continuation of $\Psi_X$ across the
$\gamma_k$ as in the construction of $\Lambda$.
Note that the 
$\Omega_\varrho$ are isometric copies of $\Omega$,
using the flat metric $g_X$ on $\R_X$ 
arising from $\pi_2 ^* (\del{}{t})$.

\begin{remark}
\label{la-esfera-no-aparece}
1. 
Even though in $\mathfrak{M}$ the corresponding 
1--form of time 
$\widetilde{\omega}_X \doteq d\widetilde{\Psi_X}$ always has 
zero residues, we shall still add the adjective \emph{nonzero residue} 
when naming those transcendental singularities 
$\widetilde{U}_a$
of $\widetilde{\Psi_X}^{-1}$
whose corresponding singularity 
$U_a \doteq \pi(\widetilde{U}_a)$ 
is a nonzero residue transcendental
singularity of $\Psi_{X, \, \Lambda}^{-1}$.
See Definition \ref{definicion-singularidades-en-Lambda}.2.

\noindent 2. Assuming that 
$\overline{\mathcal{S}_R} \neq \varnothing$,
by simple inspection,
we obtain that $\mathfrak{M}$ is biholomorphic
to $\Delta$ or $\CC$, the case $\CW$ does not appear. 
\end{remark}

A direct application of Proposition \ref{teo-conf-valores}
yields the result below.

\begin{corollary}
\label{cubierta-universal-singularidades}
Let $\Psi_X: 
M \backslash (\mathbb{E} \cup \mathcal{Z}_{R})
\longrightarrow \CW_t$ 
be a multivalued additively automorphic singular 
complex analytic function, 
as in \eqref{Psi-sec-3-2},
with fundamental domain $\Lambda$, 
and let
$\widetilde{\Psi_X}: 
\mathfrak{M} \backslash \pi^{-1}(\mathbb{E}_0)
\longrightarrow \CW$
be its extension to the universal cover $\mathfrak{M}$.

\begin{enumerate}[label=\arabic*),leftmargin=*]
\item 
For each singular value $a\in\CW_t$ of 
$\Psi_{X,\, \Lambda}$,
there are an infinite number of 
singular values 
$\{a+\Xi_\varrho 
\ \vert \  
\varrho \in \pi_1(M \backslash 
\overline{\mathcal{S}_R} ) \} \subset\CC_t$ of 
$\widetilde{\Psi_X}$.
In case that $a=\infty$,  
the singular value $\infty$ of 
$\widetilde{\Psi_X}$
has infinite multiplicity.

\item 
The function $\widetilde{\Psi_X}$ has an infinite 
number of ideal points $\widetilde{U}_{a+\Xi_\varrho }$,
each of which has the same qualitative behaviour, 
on each copy of $\Lambda$, 
as that of the ideal point $U_a$ of 
$\Psi_{X,\, \Lambda}$.

\item
When $M$ is compact and 
$\mathfrak{M}$ is biholomorphic to $\Delta$, the 
nonzero residue 
transcendental singularities of 
$\widetilde{\Psi_X}$, say $\{\widetilde{U}_a \}$, 
are a dense subset of $\partial\Delta$.

\item
When $\mathfrak{M}$ is biholomorphic to $\CC$,
consider its compactification $\CW_z$.

\begin{enumerate}[label=\roman*),leftmargin=*]
\item
If  $\mathcal{S}= \mathcal{S}_{R}= \mathcal{Z}_{R}$,
then $\infty \in \CW_z$ is a simple pole of 
$\widetilde{\Psi_X}$.

\item
If  $\mathcal{S}= \mathcal{S}_{R}\neq \mathcal{Z}_{R}$,
then $\infty$
is an isolated essential singularity of $\widetilde{\Psi_X}$.

\item
Otherwise, 
$\infty$
is an nonisolated essential singularity of $\widetilde{\Psi_X}$.
\end{enumerate}
\end{enumerate}
\end{corollary}

\begin{proof}
Assertion (3) is true by simple inspection.

For assertion (4),
assume that $\Psi_X: \CW_z \longrightarrow \CW_t$ and 
$\mathcal{S}_{R} =\{0,\infty\}$, 
thus the lift to the 
universal cover is

\centerline{$
\widetilde{\Psi_X}: \CC_z \longrightarrow \CW_t,
$}

\noindent which
has a singularity at $\infty \in \CW_z$.

\noindent 
The case (i), where 
$\mathcal{S} = \mathcal{Z}_{R}$ is equal to 
two simple poles of $\omega_X$ at $0, \, \infty$,
determines a 
simple pole of $\widetilde{\Psi_X}$ at $\infty$,
recalling the normal form of $\omega_X$
in Proposition \ref{2-forma-normal}.

\noindent 
By an analogous argument in case (ii), 
since 
$\mathcal{S}= \mathcal{S}_{R} \neq \mathcal{Z}_{R}$, 
it follows that
$\widetilde{\Psi_X}$ has an 
isolated singularity at $\infty$. 
Once again, by Proposition \ref{2-forma-normal}
it is an essential singularity of
$\widetilde{\Psi_X}$.

\noindent 
For case (iii),
if $\mathcal{S}_{R}$ is equal to 
two points $0, \, \infty$,
and $\mathcal{S}_{R} \neq \mathcal{S}$, 
then the singularity of  
$\widetilde{\Psi_X}$ at $\infty$ has an accumulation 
of the zero residue singularities,
in complete detail, points in 
$\mathcal{Z}_{0} \cup \mathcal{P}$.
\end{proof}

The noncompact case  
for $M$ in assertion (3) 
of Corollary \ref{cubierta-universal-singularidades}
is left to the reader. 

%%%%%%%%%%%%%%%%%%%%%%%%%%%%%%%%%%%%%%
\subsection{Equivalence relation on the singularities of $\Psi_{X,\, \Lambda} ^{-1}$} 
\label{Relacion-equivalencia}
Because of the biholomorphism between  $\R_X$ and the universal cover $\mathfrak{M}$
of $M\backslash\overline{\mathcal{S}_R}$,
and
Proposition \ref{teo-conf-valores}.2, 
it is natural
to define an equivalence relation
for different choices of $\Lambda$.

\begin{definition}\label{clases-equivalencia-singularidades}
Consider two singularities
$U_{a}\subset\Lambda_1$ 
of $\Psi_{X,\Lambda_1}^{-1}$ over $a\in\CW_t$,
and 
$U_{\widetilde{a}}\subset\Lambda_2$ 
of $\Psi_{X,\Lambda_2}^{-1}$ over $\widetilde{a}\in\CW_t$.
They 
are in the same \emph{equivalence class $[[U_a]]$}
if there exists a cover transformation 
$\varrho \in \pi_1 (M \backslash \overline{\mathcal{S}_R})$
such that the following occurs:

\begin{enumerate}[label=\alph*),leftmargin=*]

\item \label{condicion1}
$\Omega_2 = \varrho (\Omega_1) \subset \R_X 
$,

\item \label{condicion2}
$a=\widetilde{a}+\Xi_{\varrho}$, where 
$\Xi_{\varrho}
=\int_{\varrho } \omega_X$,

\item \label{condicion3}
for each $\rho>0$, there exist $\rho_1, \rho_2 >0$ such that

$\varrho \Big( \pi_1^{-1} \big( U_a(\rho_1) \big) \Big) \subset 
\pi_1^{-1} \big( U_{\widetilde{a}}(\rho) \big) 
\subset \Omega_2
\subset \R_X $

\noindent
and

$\varrho^{-1} \Big( \pi_1^{-1} \big( U_{\widetilde{a}}(\rho_2) \big) \Big) 
\subset  \pi_1^{-1} \big(  U_{a}(\rho) \big) 
\subset \Omega_1
\subset \R_X 
$. 
\end{enumerate}

\noindent
There exists an 
\emph{equivalence class $[\cdot]$ of 
singular values} 
induced by the equivalence class $[[\cdot]]$. 

\noindent
We shall say \emph{$[[U_a]]$ is a singularity class of $\Psi_X^{-1}$ over 
the singular value class $[a]$}.
\end{definition}

The equivalence relation is well defined; 
we leave the proof for the interested reader.

\begin{remark}\label{remark-abuso-notacion-multivaluadas}
1. Clearly, 
condition (b) in Definition 
\ref{clases-equivalencia-singularidades}
is necessary but not sufficient for the equivalence relation 
on the singular values.

\noindent
2. A convenient abuse of
notation is to say

\centerline{
``the singularity $ U_a $ of $\Psi_X^{-1}$ over the 
singular value $a$'',
}

\noindent
when in reality we should say

\centerline{
``the singularity class $[[U_a]]$ of $\Psi_X^{-1}$ over the 
singular value class $[a]$''.
}
\end{remark}

%%%%%%%%%%%%%%%%%%%%%%%%%%%%%%%%%%%%%%
\section{Singularities of $\Psi_X^{-1}$ from the perspective of vector fields}
\label{perspectiva-de-campos}

Because of the correspondence between singular complex analytic vector fields 
$X$ and additively automorphic singular complex analytic functions $\Psi_X$, 
given by
Proposition \ref{basic-correspondence};
the study of the singularities of $\Psi_X^{-1}$,
both in the single--valued case and in the multivalued additively automorphic 
case, benefits from the perspective of vector fields.

%%%%%%%%%%%%%%%%%%%%%%%%%%%%%%%%%%%%%%
\begin{figure}[h]
\begin{center}
\includegraphics[width=0.55\textwidth]{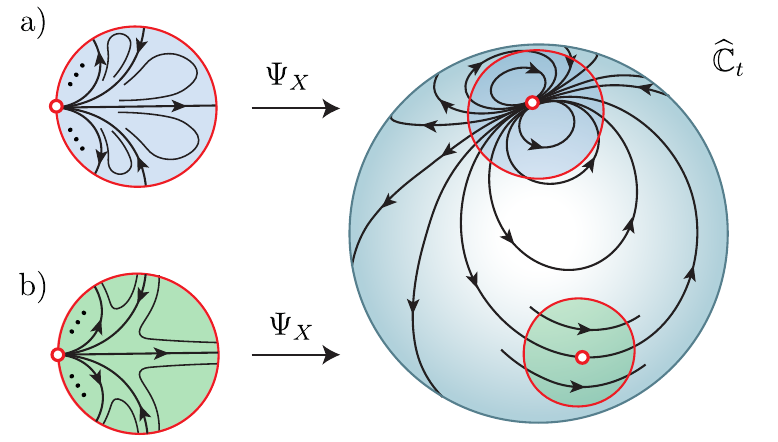}
\caption{
%2-sectores.pdf.
(a) Elliptic tracts
arise from the asymptotic value $\infty\in\CW_t$. 
(b) Hyperbolic  tracts
arise from finite asymptotic 
values $a \in\CW_t$.
The asymptotic values $a$, $\infty$ are
represented by small red circles.
}
\label{2-sectores}
\end{center}
\end{figure}
%%%%%%%%%%%%%%%%%%%%%%%%%%%%%%%%%%%%%%

\begin{example}[Example \ref{singularidad-logaritmica} revisited]
\label{singularidad-logaritmica-revisada}
The distinguished parameter 

\centerline{
$\Psi_X(z) = 
{\displaystyle \int^z }  \e^{-\zeta} d\zeta: \CC_z
\longrightarrow %D(0, 1) 
\CC_t \backslash \{ 0 \}$,}

\noindent 
with two logarithmic singularities 
over the asymptotic values $0$ and $\infty$,
has

\centerline{$X(z) = \e^z \del{}{z}$}

\noindent 
as its associated vector field.
By considering the
phase portrait of the associated
vector field,  the exponential tracts 

$U_0(\rho)=\{ \Re{z} > \log ( 1/ \abs{\rho} ) \}$  
over the asymptotic value $0$, and 

$U_\infty(\rho)=\{ \Re{z} < \log(1 /\abs{\rho} ) \}$
over the asymptotic value $\infty$

\noindent
can be clearly distinguished.
See Figures \ref{2-sectores} and 
\ref{flores-elipticas-hiperbolicas}.a.
\end{example}

As an advantage of the existence of a vector field
$X$ associated to a function $\Psi_X$, we can
refine exponential tracts.

\begin{definition}
\label{tract-eliptico-hiperbolico}
1) 
The pairs

\smallskip 

\centerline{
$\mathscr{U}_{H}=\big( \{ \Re{z} >0 \}, \e^{z} \del{}{z} \big)$, 
\qquad \
$\mathscr{U}_{E}=\big( \{ \Re{z} <0 \}, \e^{z} \del{}{z} \big)$
}

\smallskip 

\noindent
are the \emph{hyperbolic tract over 0}
and 
\emph{elliptic tract over $\infty$} of 
$X(z)=\e^z \del{}{z}$ 
respectively.
See Figure \ref{2-sectores}.

\noindent
2) 
The pair
$\big( U_a (\rho), X  \big)$ is a
\emph{hyperbolic tract over the asymptotic value $a$
of $X$,}
or
\emph{elliptic tract over the asymptotic value $a=\infty$
of $X$, 
}
if there is a biholomorphism
$\Upsilon:(U_a (\rho),X) \subset M
\longrightarrow
\mathscr{U}_{H}$, 
or to $\mathscr{U}_{E}$, 
respectively.
\end{definition}

%%%%%%%%%%%%%%%%%%%%%%%%%%%%%%%%%%%%
\begin{figure}[htbp]
\begin{center}
\includegraphics[width=0.80\textwidth]{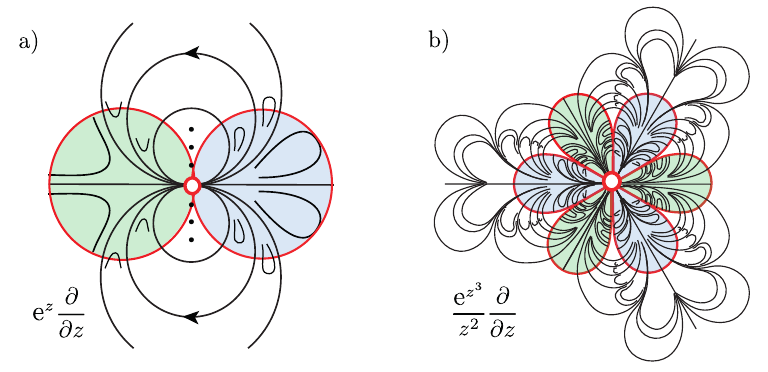}
\caption{
Geometry of exponential vector fields. 
(a) For $X(z)= \e^{z}\del{}{z}$
of Example \ref{singularidad-logaritmica-revisada}, the
essential singularity at $\infty\in\CW_z$
gives rise to two logarithmic singularities;
one hyperbolic tract over 
the singular value $0$,
and one elliptic tract over $\infty\in\CW_t$.
(b) For $X(z)=(\e^{z^3}/z^2) \del{}{z}$
of Example \ref{ejemplo-a=b}.1,  
the essential singularity at $\infty\in\CW_z$
gives rise to six logarithmic singularities.
There are
three hyperbolic tracts with finite asymptotic value $0$, 
with multiplicity 3, and
three elliptic tracts with asymptotic value $\infty$, 
accurately denoted
$\infty_1, \infty_2, \infty_3$.
The colouring 
scheme for petal regions is as follows: 
green for hyperbolic tracts and
blue for elliptic tracts, it will be used consistently throughout. 
}
\label{flores-elipticas-hiperbolicas}
\end{center}
\end{figure}
%%%%%%%%%%%%%%%%%%%%%%%%%%%%%%%%%%%%%%

Certainly, the notion of biholomorphism is rigid. 
It is suitable for our present work since
we gain flexibility of this notion by applying it
to open Jordan domains of 
$(M, X)$
and under
variations of the radius $\rho$.

Let us recall the following theorem, 
cited in the Introduction in a brief version,
due to R. Nevanlinna,
that applies to single--valued functions.

\begin{theorem*}[Nevanlinna's isolated singular values, 
\cite{Nevanlinna1}  Ch.~XI \S1.3,
\cite{Zheng} Thm.~6.2.2]
Let $\Psi_{X}: \CC_z \longrightarrow \CW_t$
be a single--valued meromorphic function, 
and let $a$
be an isolated singular value for $\Psi_X$.
If $U_a$ is a singularity of $\Psi^{-1}_X$ over $a$, then $U_a$ is algebraic or logarithmic.
%\hfill $\Box$
\end{theorem*}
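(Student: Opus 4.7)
The plan is to exploit the isolation of the singular value $a$ to realize the restriction of $\Psi_X$ near $U_a$ as a holomorphic covering of a punctured disk, and then to classify that covering via $\pi_1(D(a,\rho)\setminus\{a\})\cong\ZZ$. First, I would choose $\rho>0$ small enough that $D(a,\rho)\subset\CW_t$ contains no singular value of $\Psi_X$ other than $a$; such $\rho$ exists by the isolation hypothesis. Let $U_a(\rho)\subset\CC_z$ be the component of $\Psi_X^{-1}(D(a,\rho))$ defining $U_a$, and consider the restricted map $g:=\Psi_X|_{U_a(\rho)\setminus \Psi_X^{-1}(a)}$ into $D(a,\rho)\setminus\{a\}$. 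Because no critical values lie in $D(a,\rho)\setminus\{a\}$, $g$ is a local biholomorphism.

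The key technical step is to upgrade $g$ to a covering map. This rests on the component definition of $U_a(\rho)$: any loop $\gamma$ in $D(a,\rho)\setminus\{a\}$ admitting an initial lift in $U_a(\rho)\setminus\Psi_X^{-1}(a)$ extends to a full lift that stays in $U_a(\rho)$ (such a lift cannot leave without its image leaving $D(a,\rho)$) and avoids $\Psi_X^{-1}(a)$ (it cannot cross a preimage of $a$ without $\gamma$ passing through $a$). Combined with local invertibility, this yields the evenly-covered-neighborhood property, so $g$ is a covering of $D^{*}:=D(a,\rho)\setminus\{a\}$.

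Now classify $g$. Connected covers of $D^{*}$ are indexed by the subgroups $n\ZZ\subset\ZZ\cong\pi_1(D^{*})$ for $n\in\{0,1,2,\ldots\}$, each with a standard model. If $n\geq 1$, the cover is finite, so $U_a(\rho)\setminus\Psi_X^{-1}(a)$ is conformally a punctured disk and $g$ is modeled by $w\mapsto w^n$ in a suitable local coordinate. Since this Riemann surface has a single inner end, the preimage of $a$ in $U_a(\rho)$ must consist of a single point $p$, and by Riemann's removable singularity theorem (or its pole version when $a=\infty$) $\Psi_X$ extends across $p\in\CC_z$ with $\Psi_X(p)=a$; hence $\cap_\rho U_a(\rho)=\{p\}$ and $U_a$ is algebraic (or an ordinary point when $n=1$). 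If $n=0$, the cover is universal, so $g$ identifies $U_a(\rho)\setminus\Psi_X^{-1}(a)$ with a half-plane via an exponential map; in particular $\Psi_X$ omits $a$ on $U_a(\rho)$, so $U_a$ is direct and, by Definition \ref{logbranchpoint}, logarithmic. This dichotomy simultaneously rules out both direct non-logarithmic and indirect singularities over an isolated $a$.

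The main obstacle I foresee is the rigorous justification that $g$ is genuinely a covering rather than merely a local homeomorphism. Local injectivity is automatic, but the evenly-covered-neighborhood property has to be drawn out of the component definition of $U_a(\rho)$ together with the absence of critical and asymptotic values in $D^{*}$. A clean route is to verify path lifting directly and then invoke the standard characterization of covers of locally simply connected spaces. The case $a=\infty$ requires only cosmetic modifications, composing with the chart $t\mapsto 1/t$ on $\CW_t$.
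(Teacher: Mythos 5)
Your argument is correct and is precisely the route the paper intends: the paper states this theorem without proof, calling it ``a direct consequence of the normal form for holomorphic covers of the punctured plane,'' which is exactly the covering-space classification over $\pi_1(D(a,\rho)\setminus\{a\})\cong\ZZ$ that you carry out (with the no-asymptotic-value condition supplying path lifting, as you note). The one detail worth tightening is the finite-degree case: to see that the inner end of $U_a(\rho)$ is an actual point $p\in\CC_z$ rather than the essential singularity at $\infty$, uniformize $U_a(\rho)\setminus\Psi_X^{-1}(a)$ by a punctured disk, observe that the uniformizing map is injective and hence extends meromorphically to the puncture, and note that if it sent the puncture to $\infty$ then $\Psi_X$ would extend holomorphically across $\infty$ with value $a$, contradicting transcendence.
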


As an immediate consequence,
direct nonlogarithmic and indirect singularities of (single--valued) $\Psi^{-1}_X$ over $a$
imply that the singular value $a$ is nonisolated, {\it i.e.} $a$ is an accumulation point of singular values of $\Psi_X$.
There are, however, 
logarithmic singularities of $\Psi^{-1}_X$ over nonisolated asymptotic values $a$;
see for instance Example \ref{Ejemplo-ExpSin-z} and its corresponding Figure \ref{ExpSin-z}.
However, 
for multivalued additively automorphic functions $\Psi_X$
we also have to consider
the nonzero residue transcendental singularities of $\Psi_X^{-1}$, 
see Definition \ref{definicion-singularidades-en-Lambda}.2.
As seen in Remark \ref{remark-def-sing-inv}.2, 
the $\star$--transcendental singularities of $\Psi_{X,\,\Lambda}^{-1}$ is 
not algebraic or logarithmic even though the asymptotic value $\infty$ is 
isolated (in fact it is a direct nonlogarithmic singularity
over the isolated singular value $\infty\in\CW_t$).

\noindent
The above discussion shows that when working with
multivalued
additively automorphic singular complex analytic functions,
it is not enough to just consider the singular values of the ideal points $U_a$; 
one must also examine the neighbourhoods $U_a(\rho)$.
For this,
we introduce the following concept,
understood as in Remark 
\ref{remark-abuso-notacion-multivaluadas}.2,
{\it i.e.} with a choice of a fundamental domain $\Lambda$.

\begin{definition}\label{separate}
For $\Psi_X$ an additively automorphic singular
complex analytic function,
let $U_{a}, \ U_{b}$
be singularities of $\Psi^{-1}_X$ over the singular values 
$a$ and $b$, respectively.
A singularity \emph{$U_a$ is separate} if there exists $\rho>0$ such that

\centerline{$U_{a}(\rho)\cap U_{b}(\rho)
=\varnothing$,  \ for all \  $U_b \neq U_a$.}
\end{definition}

In the above definition the case $a=b$ is possible,
see Example \ref{ejemplo-a=b}.
In words, the ideal point $U_a$ is separate if for small enough $\rho>0$ the neighbourhood 
$U_{a}(\rho)$ does not intersect 
any neighbourhood of another ideal point.
Similarly, an ideal point is \emph{nonseparate} if and only if any neighbourhood $U_{a}(\rho)$ 
always intersects a neighbourhood of another ideal point.

\begin{example}
\label{ejemplo-a=b}
1. \emph{Separate singularities, case $a=b$.}
Consider 

\centerline{$X(z)=\dfrac{\e^{z^3} }{ z^2} \ddel{}{z}$. }

\noindent 
The corresponding $\Psi_X^{-1}$ has six logarithmic singularities arising from 
the essential singularity of $X$ at $\infty\in\CW_z$ and an algebraic singularity
arising from the pole of $X$ at the origin.
All the singularities of $\Psi_X^{-1}$ are separate.
The asymptotic values are

\centerline{
$a_1=a_2=a_3=0, 
\quad\text{ and }\quad
\infty_1 =\infty_2 =\infty_3 = \infty$,}

\noindent 
\emph{i.e.} there are two asymptotic values, 
each of multiplicity 3.  
As can be seen in Figure
\ref{flores-elipticas-hiperbolicas}.b, 
there are six singularities of 
$\Psi_X^{-1}$, 
corresponding to three hyperbolic tracts over $0$
and three elliptic tracts over $\infty$. 
Full details appear in example 4.2 of \cite{AlvarezMucinoII}.

\noindent 
Thus the singular values 
$a$ and $b$ can be the same  
and yet $U_a$ and $U_b$ 
can be different singularities of $\Psi^{-1}_X$.

\noindent 
2. \emph{Nonseparate singularities, case $a \neq b$.} 
In Example \ref{Ejemplo-ExpSin-z}, 

\centerline{
$\Psi_X (z) = \e^{\sin(z)-z}$,}

\noindent 
is considered. 
Among other things, it is shown that 
for any given $\rho>0$, each neighbourhood 
$U_{\infty,-}(\rho)$ and $U_{0,+}(\rho)$, 
with asymptotic values $\infty$ and $0$ respectively,
contains: 
\begin{itemize}[label=$\bigcdot$,leftmargin=*]
\item 
an infinite number of neighbourhoods $U_{a_{k\pm}}(\rho)$, 
with asymptotic value $0$ or $\infty$, for $k$ odd or even, respectively, 

\item 
an infinite number of critical points. 
\end{itemize}

\noindent 
Thus both $U_{\infty,-}(\rho)$ and $U_{0,+}(\rho)$ 
are nonseparate, 
Figure \ref{ExpSin-z} illustrates this fact.
\end{example}

\begin{remark}
The notion of separate is of a topological nature. 
Thus, even when dealing with multivalued
additively automorphic singular analytic functions $\Psi_X$, 
for small enough $\rho>0$, the 
neighbourhoods $\{U_a(\rho)\}$ are well defined.
One just needs to recall that as soon as a choice of fundamental domain $\Lambda$ has been made,
all happens inside the chosen $\Lambda$.
\end{remark}

With the notion of separate singularity of $\Psi_X^{-1}$, 
we can improve %the above 
Nevanlinna's isolated singular values theorem.

\begin{theorem}[Separate singularities]
\label{teo-logaritmicas-separada}
Let 
$\Psi_{X}: M \longrightarrow \CW_t$
be a  
additively automorphic singular complex 
analytic function,
as in \eqref{parametro-local}.
A singularity $U_a$ of $\Psi_X^{-1}$
is separate if and only if $U_a$ is one of the following:
\begin{enumerate}[label=\arabic*),leftmargin=*]
\item
algebraic, 

\item
$\star$--transcendental, 

\item
logarithmic.
\end{enumerate}
\end{theorem}

\begin{proof}
$(\Leftarrow)$ For cases (1) and (3) note that 

\centerline{
$\Psi_{X,\,\Lambda}:U_a(\rho)\longrightarrow D(a,\rho)\backslash\{a\}$
}

\noindent
is an unbranched holomorphic covering for sufficiently small $\rho>0$.
In case (1), 
the covering 
is of finite degree and in case (2) it
is the universal covering, in accordance with Definition
\ref{direct-indirecta-logaritmica}.
In either case,
$U_a$ is separate.

For case (2), recall that Equation
\eqref{nonzero-star--transcendental-singularity}
provides a local normal form as 

\centerline{
$\Psi_{X,\,\Lambda}(z) = \lambda \log(z) + 1/z^{s-1}$, 
\ \ \
for $s\geq 1$.
}

\noindent
Moreover, the asymptotic value is $a=\infty$ and 
the neighbourhoods of $U_\infty$ are 
(up to biholomorphism)  of the form
$U_\infty(\rho)
\cong
\big(D(0,R)\backslash [0,R] \big)\cup [0,R]_+$, 
recalling the construction of $\Lambda$ in \S\ref{construccion-region-fundamental}.3. 
In fact, 
$$
\begin{array}{rcl}
\Psi_{X,\,\Lambda}:U_\infty(\rho)
\cong
\big(D(0,R)\backslash [0,R] \big) \cup [0,R]_+
& \longrightarrow & D(\infty,\rho)\backslash\{\infty\}
\\
z & \longmapsto & \lambda \log(z) + {1}/{z^{s-1}}
\end{array}
$$

\noindent
topologically 
is an $s$--fold unbranched holomorphic log--covering 
\emph{i.e.} $2s-2$ elliptic sectors followed by a parabolic sector determining $\lambda$; 
see Figure \ref{cubierta-incompleta}.
Clearly, $U_a$ is separate.

\medskip
\noindent
$(\Rightarrow)$
Now, we assume that $U_a$ is separate.
Thus 
given $U_b\neq U_a$,
there exists $\rho>0$ such that
$U_a(\rho) \cap U_b(\rho) = \varnothing$ in $\Lambda$.
In particular, this implies that
$U_a(\rho)$ does not contain any singular points
other than $z_{\tt s}=\lim\limits_{{\tt t}\to\infty} \alpha_a({\tt t})$, 
where $\alpha_a$ is the asymptotic path corresponding to $U_a$.

\noindent
Recalling Definition \ref{Nueva-def-vecindades},
$\Psi_{X,\,\Lambda} = \pi_2 \circ \pi_1^{-1}\vert_\Lambda$.
In fact 
\begin{equation}\label{pi2-es-cubriente}
\pi_2: V\big((\alpha_a,a),\rho\big) \longrightarrow D(a,\rho)\backslash\{a\}
\end{equation}
is an unbranched holomorphic covering, and 

\centerline{$\pi_1\vert_\Omega:
\Omega\longrightarrow\Lambda\backslash\mathbb{E}_0 \subset M$ 
}

\noindent
is a biholomorphism.
It follows that for any neighbourhood $U_a(\rho)\subset \Lambda$ 
of a singularity $U_a$ of $\Psi_X^{-1}$ one has  
the diagram
\begin{center}
\begin{picture}(180,65)(0,20)

\put(-131,40){\vbox{
\begin{equation}\label{factorizando-Psi}\end{equation}
}}

\put(-25,75){$M \supset \Lambda \supset U_a(\rho) $}

\put(115,75){$V\big((\alpha_a,a),\rho\big)\cap\Omega\subset\R_X$}

\put(108,78){\vector(-1,0){60}}
\put(75,85){$\pi_1\vert_\Omega$}

\put(133,65){\vector(0,-1){30}}
\put(138,47){$ \pi_{2} $ }

\put(38,65){\vector(2,-1){73}}
\put(50,39){$ \Psi_{X,\, \Lambda} $}

\put(115,20){$D(a,\rho)\subset\CW_t$,}

\end{picture}
\end{center}
\noindent
where $V\big( (\alpha_a,a), \rho \big)$ is the component of 
$\pi_2^{-1}\big( D(a,\rho) \big)$ such that 

\centerline{
$U_a(\rho) \doteq \pi_{1}|_{\Omega} \Big( V\big( (\alpha_a,a), \rho \big) \Big)$.
}

\noindent
Thus, in order to specify the neighbourhood
$U_a(\rho)$, we first choose a connected component 
$V\big((\alpha_a,a),\rho\big)\subset\R_X$ of $\pi_2^{-1}\big(D(a,\rho)\big)$.

\noindent 
Since \eqref{pi2-es-cubriente} is an unbranched holomorphic covering, 
it follows that the closure of $V\big((\alpha_a,a),\rho\big)$ 
in $\R_X$ is  topologically 
a disk or a punctured disk.

\noindent 
Having identified $V\big((\alpha_a,a),\rho\big)$, we now intersect with $\Omega$.
Once again, recalling the construction of $\Lambda$, particularly $\Gamma$, 
in \S\ref{construccion-region-fundamental}.3,
three cases appear:

\noindent 
(A) $\partial\Omega \cap V\big((\alpha_a,a),\rho\big) = \varnothing$, or

\noindent 
(B) $\partial\Omega \cap V\big((\alpha_a,a),\rho\big) = 
\widehat{\gamma}_1
$
for a simple path
$\widehat{\gamma}_1$ 
that has as one of its
extrema the branch point $(\alpha_a,a)$.

\noindent 
(C) $\partial\Omega \cap V\big((\alpha_a,a),\rho\big) = 
\widehat{\gamma}_1 \cup \widehat{\gamma}_2
$,

\noindent 
for some 
simple paths $
\widehat{\gamma}_1 \cup \widehat{\gamma}_2
\subset\R_X$ that have as 
common extrema the branch point $(\alpha_a,a)$.

\noindent
Note that $\pi_1(
\widehat{\gamma}_j
)=\gamma_j \subset \Gamma\subset M$.

\smallskip
In case (A), since $\pi_{1}\vert_\Omega$ is a biholomorphism
it follows immediately 
that 

\centerline{
$\Psi_{X,\,\Lambda}=\pi_2 \circ \pi_{1}^{-1}\vert_{\Lambda}:
U_a(\rho) \longrightarrow D(a,\rho)\backslash\{a\}$ 
}

\noindent
is an unbranched holomorphic covering.
Thus by 
\cite{Zheng} theorem 6.1.1, 
either:

\noindent
(A.i) there exists a biholomorphism $\Phi$ of $U_a(\rho)$ onto 
$\Delta^*\doteq\{z\ \vert\ 0<\abs{z}<1\}$ such that $\Psi_{X,\,\Lambda} =\Phi^k$ 
for some natural number $k$, or

\noindent
(A.ii) there exists a biholomorphism $\Phi$ of $U_a(\rho)$ onto the left half plane 
$\mathbb{H}_{-} = \{z \ \vert\  \Re{z} < 0\}$ such that $\Psi_{X,\,\Lambda} = \exp \circ\, \Phi$.

\noindent
For  
(A.i), 
$U_a$ is algebraic and for 
(A.ii), 
$U_a$ is logarithmic.

\medskip
Let us now examine cases (B) and (C).
By Definition \ref{eremenko1}, for $0<\rho^\prime<\rho$ the neighbourhoods 
satisfy
$\overline{V\big((\alpha_a,a),\rho^\prime \big)} \subset V\big((\alpha_a,a),\rho \big)$, 
where the closure is in $\R_X$.

\noindent 
Since \eqref{pi2-es-cubriente} is an unbranched holomorphic covering,
the closure of 
$V\big((\alpha_a,a),\rho\big)$ in $\R_X$
is topologically a disk or 
a punctured disk.

\begin{lemma}\label{deformacion-gammas}
Let $0<\rho^\prime<\rho$.
The paths 
$\widehat{\gamma}_1$ and $\widehat{\gamma}_2$
can be deformed to
$\hat{\widehat{\gamma}}_1$ and $\hat{\widehat{\gamma}}_2$,
within $V\big((\alpha_a,a),\rho \big)$ so that:

\noindent
(a) 
$\hat{\widehat{\gamma}}_1$ and $\hat{\widehat{\gamma}}_2$
do not intersect $V\big((\alpha_a,a),\rho^\prime \big)$, when 
$V\big((\alpha_a,a),\rho\big)$ is topologically a disk,

\noindent
(b) 
$\hat{\widehat{\gamma}}_1$ and $\hat{\widehat{\gamma}}_2$
coincide inside 
$V\big((\alpha_a,a),\rho^\prime \big)$, when 
$V\big((\alpha_a,a),\rho\big)$ is topologically
a punctured disk.
\end{lemma}

Figure \ref{deformacion-curvas-3} illustrates the lemma.
Note that the paths $\widehat{\gamma}_1$ and $\widehat{\gamma}_2$ 
do not change outside of $V\big((\alpha_a,a),\rho \big)$, hence do not affect other 
singularities of $\Psi_{X,\,\Lambda}^{-1}$.

\begin{proof}
Follows immediately from the fact that $U_a$ is separate, and hence we can deform 
$\gamma_1$ and $\gamma_2$ 
in the open set $U_a(\rho) \backslash \overline{U_a(\rho^\prime)}$, 
leaving the extrema at $\partial U_a(\rho)$ and 
the branch point $(\alpha_a,a)$ fixed.
\end{proof}

\noindent
Case (i) tells us that 
$\partial \Omega \cap V\big((\alpha_a,a),\rho^\prime \big) = \varnothing$
and we have reduced to case (A) above, 
so $U_a$ is an algebraic or logarithmic singularity of $\Psi_{X,\,\Lambda}^{-1}$.

\noindent
For case (ii), up to biholomorphism

\centerline{
$V\big((\alpha_a,a),\rho^\prime \big) \cap \Omega
=
\big(D(0,R)\backslash [0,R] \big) \cup [0,R]_+ \subset\R_X$,
}

\noindent
note that $[0,R]$ projects by $\pi_1$ to a trajectory
of $\Re{X}$. 
By simple inspection we can recognize that 
Figure \ref{cubierta-incompleta} describes $\Psi_X$.
Thus the singularity $U_a$ is a
$\star$--transcendental singularity of $\Psi_{X,\,\Lambda}^{-1}$.
This completes the proof of the theorem.
\end{proof}

A list of the simplest singular behaviours 
is provided by the theorem below. 

\begin{theorem}[Topological behaviour of $\Re{X}$ and 
the singularities of $\Psi^{-1}_X$]
\label{singularidades-algebraicas-y-logaritmicas}
Let $X$ be a singular complex analytic vector field
and $\Psi_{X}$ the corresponding
additively automorphic singular complex analytic 
function,
as in \eqref{parametro-local}. 
Considering 
the phase portrait 
of $\Re{X}$ on the neighbourhood $U_a(\rho)$ for small enough $\rho>0$,
the name of the singularity of $X$, 
the type of singularity of $\Psi_X^{-1}$, 
and 
the residue of the 1--form of time $\omega_X$
at $z_{\tt s} \in \mathcal{S}$ with asymptotic value $a \in \CW_t$ 
as in
Definition \ref{defasymptoticvaluepath},
a partial correspondence is 
\begin{center}
\begin{tabular}{|c|l|l|c|}
\hline
&&& \vspace{-.3cm}
\\
$\big(U_a(\rho),\Re{X} \big)$ consists of & Name
of the singularity of $X$
& Type of  the singularity of $\Psi^{-1}_X$ &  Value of $Res(\omega_X,z_{\tt s})$
\\
&&& \vspace{-.3cm}
\\
\hline
%%%%%%%%%%%%%%%%%%%%%%%%%
\hline
$(2k+2)$ hyperbolic sectors & pole $p\in\mathcal{P}$ of  & 
algebraic singularity $U_a$  & $0 $
\\
 & multiplicity $-k\leq -1$ 
& over $a\in\CC_t$ &
\\
%%%%%%%%%%%%%%%%%%%%%%%%%
\hline
$(2s-2)$ elliptic sectors & \hfill $q\in\mathcal{Z}_0$ & 
algebraic singularity $U_\infty$ & $ 0$
\\
& zero $q\in\mathcal{Z}$ of & over $\infty$ & 
\\
\tabdashline &  multiplicity $s\geq 2$ & \tabdashline
and & & $\star$--transcendental singularity
 & \\
parabolic sectors& \hfill $q\in\mathcal{Z}_{R}$ & $U_\infty$ over $\infty$ & $\CC^*$
\\
%%%%%%%%%%%%%%%%%%%%%%%%%
\hline
source, sink or center & simple zero $q\in\mathcal{Z}_R$ & 
$\star$--transcendental singularity 
& 
\\
 &  & $U_\infty$ over $\infty$
& $\CC^*$
\\
%%%%%%%%%%%%%%%%%%%%%%%%%
\hline
%%%%%%%%%%           Para partir la tabla en dos quitar comentarios desde aqui
%\end{tabular}
%\end{center}
%\begin{center}
%\begin{tabular}{|c|l|l|c|}
%\hline
%&&& \vspace{-.3cm}
%\\
%$\big(U_a(\rho),\Re{X} \big)$ consists of & Name
%of the singularity of $X$
%& Type of  the singularity of $\Psi^{-1}_X$ &  Value of $Res(\omega_X,z_{\tt s})$
%\\
%&&& \vspace{-.3cm}
%\\
%\hline
%%%%%%%%%%       Para partir la tabla en dos quitar comentarios hasta aqui
%%%%%%%%%%%%%%%%%%%%%%%%%
hyperbolic tract & isolated essential & logarithmic transcendental & $\CC$
\\
& singularity $e\in\mathbb{E}$ & singularity $U_a$ over $a\in\CC_t$ & 
\\
%%%%%%%%%%%%%%%%%%%%%%%%%
\hline
elliptic tract & isolated essential & logarithmic transcendental & $\CC$
\\
 & singularity $e\in\mathbb{E}$ & singularity $U_\infty$ over $\infty$ &
\\
\hline
&essential singularity & non separate essential &
\\
?&$e\in\mathbb{E}$& transcendental singularity $U_a$ & $\CC$ 
\\
& &  over $a \in \CW_t$&
\\
\hline
\end{tabular}
\end{center}
%\end{table}
\hfill $\Box$
\end{theorem}

%%%%%%%%%%%%%%%%%%%%%%%%%%%%%%%%%%%%%%
\begin{figure}[h]
\begin{center}
\scalebox{0.7}{\includegraphics{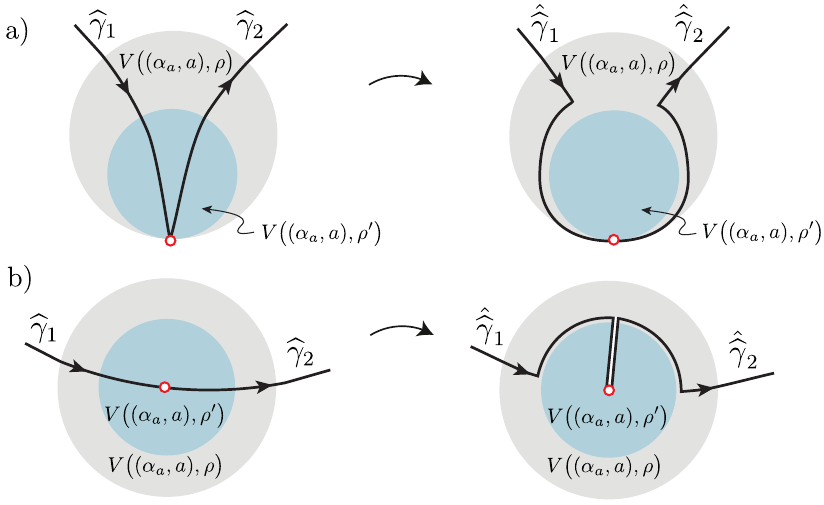}}\\
\caption{
%deformacion-curvas.pdf
Deformation of
the paths $\widehat{\gamma}_1$ and $\widehat{\gamma}_2$ 
inside the conformal disk 
$V\big((\alpha_a,a),\rho \big) \subset \R_X $,
as in Lemma \ref{deformacion-gammas}.
The red circle represents the branch point $(\alpha_a,a)$.
}
\label{deformacion-curvas-3}
\end{center}
\end{figure}
%%%%%%%%%%%%%%%%%%%%%%%%%%%%%%%%%%%%%%%

\begin{remark}\label{corolario-directa-nolog}
1. The above result emphasizes 
the dichotomy between 
finite and infinite singular values of $\Psi_X$.

\noindent 2.
A singular value $a \in \CW_t$ can 
admit several ideal points $\{ U_a \}$ over it.

\noindent
3.
In the table of Theorem 
\ref{singularidades-algebraicas-y-logaritmicas}, the 
question mark in the last row 
means that many other topologies occur.
For instance, 
the last row contains direct and non direct singularities.
\end{remark}

By Lemma \ref{Lemma-RX}.3  each 
asymptotic path of $\Psi_X$ can be realized as a
trajectory $z({\tt t} )$ of $\Re{\e^{i \theta}X}$
with $\alpha$ or $\omega$--limit $z_{\tt s}$, 
for some $\theta$, the converse is obvious.

\begin{definition}
A singularity $z_{\tt s} \in M$ of $X$ 
is \emph{reachable} when there exists an asymptotic path
of $\Psi_X$ with limit $z_{\tt s}$.
\end{definition}

\begin{example}[A non reachable singularity]
\label{sing-campo-no-implica-sing-psi}
Note that, not all singularities of $X$ 
have an associated singularity of $\Psi_X^{-1}$.
In our framework 
the choice of 
a singular point
$z_{\tt s} \in \mathcal{S}$ of $X$
does not
imply the existence of an asymptotic path and its
asymptotic value.  
For instance, consider the singular complex analytic vector field 

\centerline{
$X(z)=\frac{\cosh{z}}{\cos{z}} \del{}{z}$.
}

\noindent
It has simple zeros at $i(4k\pm 1)\frac{\pi}{2}$ and 
simple poles at $(4k\pm 1)\frac{\pi}{2}$, for $k\in\ZZ$.
Thus $z_{\tt s}=\infty\in\mathbb{E}_{nR}$ is an accumulation point
of $\mathcal{Z}_{R}\cup\mathcal{P}$;
see Figure \ref{cosh-cos}.
However,
since there is no asymptotic path tending to $z_{\tt s}=\infty$,
there is no singularity of $\Psi_X^{-1}$ associated 
to $z_{\tt s}=\infty$;
Elliptic functions $\wp(z)$ in $\CC_z$ 
provide analogous examples.
\end{example}

%%%%%%%%%%%%%%%%%%%%%%%%%%%%%%%%%%%%%%
\begin{figure}[htbp]
\begin{center}
\includegraphics[width=0.65\textwidth]{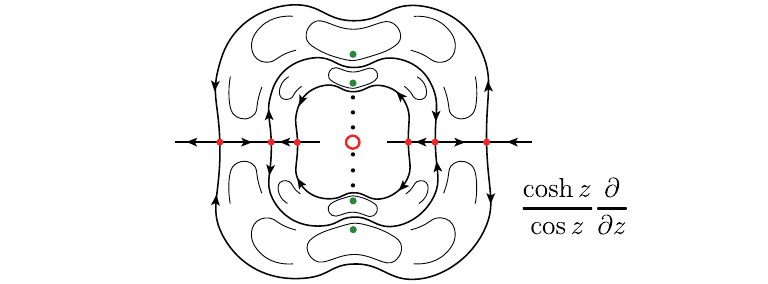}
\caption{
%cosh-cos.pdf.
The singularity at $\infty$ of the vector field 
$X(z)= \big( \cosh z/\cos z \big)\del{}{z}$, 
the small red circle, 
does not have an associated singularity of $\Psi_X^{-1}$.
}
\label{cosh-cos}
\end{center}
\end{figure}
%%%%%%%%%%%%%%%%%%%%%%%%%%%%%%%%%%%%%%

\begin{theorem}[Ideal points in terms of singularities of $X$]
\label{puntos-ideales-en-terminos-de-X}
Let $\Psi_X$ be an
additively automorphic 
singular complex analytic function,
as in \eqref{parametro-local}, and $X$ its 
corresponding singular complex analytic vector field.
A reachable singularity $z_{\tt s}\in\mathcal{S}$ of $X$ 
determines at least
one singularity of $\Psi_X^{-1}$.

\begin{enumerate}[label=\arabic*),leftmargin=*] 
      %%%%%%% caso residuo zero
\item If the singularity 
$z_{\tt s} \in \mathcal{S}_0$ of $X$ has {\bf residue zero},
then one of the following cases occurs.
\begin{enumerate}[label=\alph*),leftmargin=*]
\item 
A zero of $X$ of order 2 determines: 
a simple pole of $\Psi_X$, 
a nonsingular point of $\Psi_X$, and 
an ordinary point of $\Psi_X^{-1}$.

\item 
A pole or a zero (of order greater than 2) of $X$
determines: 
a critical point of $\Psi_X$, and 
an algebraic singularity of $\Psi_X^{-1}$.

\item
An essential singularity of $X$ determines: 
an essential singularity of $\Psi_X$, 
and at least one zero residue
essential transcendental singularities $\{ U_{a_\iota} \}$ of 
$\Psi_X^{-1}$
over $\{ a_\iota \}\subset\CW_t$.
\end{enumerate}
     %%%%%%% caso residuo no zero
\item 
If the singularity $z_{\tt s} \in \mathcal{S}_R$ of $X$ has 
{\bf nonzero residue}, then it
can be understood within equivalent perspectives 
given by $\Lambda$ or $\mathfrak{M}$:

\begin{enumerate}[label=\Alph*),leftmargin=*]
     %%%%% usando \Lambda
\item 
In the context of 
a fundamental domain
$\Lambda$ with $\Psi_{X,\, \Lambda}$ 
as in \S\ref{caso-multivaluado-para-Psi}.

\begin{enumerate}[label=\alph*),leftmargin=*]
\item
A zero of $X$ determines a  
$\star$--transcendental singularity $U_\infty$ of 
the inverse of 
$\Psi_{X,\, \Lambda}^{-1}$
over $\infty\in\CW_t$.

\item 
An essential singularity of $X$
determines at least one 
nonzero residue essential
transcendental singularities $\{ U_{a_\iota} \}$ of
$\Psi_{X,\, \Lambda}^{-1}$
over 
$\{ a_\iota \}\subset\CW_t$
and $\{U_\infty\}$ over $\infty\in\CW_t$. 
\end{enumerate}

     %%%%% usando \mathfrak{M}
\item
In the context of 
the universal cover
$\mathfrak{M}$,
with $\widetilde{\Psi_X}$ the analytic extension of $\Psi_X$
as in \S\ref{caso-widetilde-Psi}.
\begin{enumerate}[label=\alph*),leftmargin=*]
 %%%% Caso \mathfrak{M}=\Delta
\item When $\mathfrak{M}=\Delta$, each singularity $z_{\tt s} \in \mathcal{S}_{R}$ of $X$ 
determines an infinite number of
nonzero residue transcendental singularities of 
$\widetilde{\Psi_X}^{-1}$, 
$\{ \widetilde{U}_a \} \subset \partial\Delta$ located on the boundary
$\partial\Delta$ of $\mathfrak{M}$.
Moreover, when $M$ is compact, the set of ideal points
$\{ \widetilde{U}_a \}$ is dense in $\partial\Delta$.

\begin{enumerate}[label=\roman*),leftmargin=*]
\item 
A zero of $X$  determines only the asymptotic value 
$a=\infty$ and an infinite number of ideal points
$\widetilde{U}_\infty$.

\item
An essential 
singularity of $X$ determines an infinite number of 
nonzero residue essen\-tial
transcendental singularities of 
$\widetilde{\Psi_X}^{-1}$ over $\{a_\iota+\Xi\}\subset\CW_t$,
namely $\widetilde{U}_{a_\iota+\Xi}$, where
$\{\Xi\}$ is the set of linear combinations of
residues and periods of 
$\omega_X$, 
and $\{ a_\iota \}$ are the asymptotic values as in
(2.A.b) above.
\end{enumerate}

 %%%% Caso \mathfrak{M}=\CC
\item When $\mathfrak{M}=\CC$, we 
consider its compactification $\CW_z$.

\begin{enumerate}[label=\roman*),leftmargin=*]
\item
If  $\mathcal{S}= \mathcal{S}_{R}= \mathcal{Z}_{R}$, 
then $\infty \in \CW_z$ is a
simple zero of $\widetilde{X}$.

\item
If  $\mathcal{S}= \mathcal{S}_{R}\neq \mathcal{Z}_{R}$,
then $\infty$
is an isolated essential singularity of $\widetilde{X}$.

\item
Otherwise, 
$\infty$
is an nonisolated essential singularity of $\widetilde{X}$.
\end{enumerate}

\end{enumerate}

\end{enumerate}

\end{enumerate}

In (1.c), (2.A.b) and (2.B.a.ii) the number of 
asymptotic values depends on the order of growth of $X$.
\hfill\qed
\end{theorem}

As an illustrative family of 
Theorem \ref{puntos-ideales-en-terminos-de-X} 
it is natural to consider.

\begin{example}[Rational vector fields on $\CW$]
\label{Ejemplo-racional}
Let $q_1, \ldots , q_s$
be $s \geq 3$ distinct
points in $\CC$ and 
let $r_1, \ldots , r_s \in \CC^*$, such that 
$\sum r_k= 0$. We have the vector field

\centerline{
$X(z) =
\Big(  
\sum_{k=1}^s \dfrac{r_{k}}{z-q_k}
\Big)^{-1} \del{}{z}.$
}

\noindent
In this case the singular set is 
$\mathcal{S} = \mathcal{Z}_R \cup \mathcal{P}$ and
$\mathcal{Z}_R=\{ q_k\}_{k=1}^s$.
Consider $z_{\tt o} \in \CW \backslash \mathcal{S}$, 
its global distinguished parameter 

\centerline{
$\Psi_X(z)= 
{\displaystyle  \int_{z_{\tt o}}^z}
\Big(\sum_{k=1}^s \dfrac{r_{k} d\zeta }{\zeta -q_k} \Big)
=
\sum_{k=1}^s r_k \log(z-q_k)
+ C
$
}

\noindent 
is a multivalued additively automorphic 
singular complex analytic function.
We construct a fundamental 
region $\Lambda$ for $\Psi_X$,
as in
\S\ref{construccion-region-fundamental}.
Let $\{ \gamma_k\}$ be segments between two 
zeros  $\{ q_k\}_{k=1}^s$ of $X$, 
$\Gamma$ is the union
of these segments, 
thus 

\centerline{
$\Lambda =
\big( \CW_z \backslash  \Gamma  \big)
\cup \gamma_{k+}$
}

\noindent
is a fundamental region. 
Each simple zero $q_k$ of $X$ has 
a $\star$--transcendental singularity 
of $\Psi_{X,\,\Lambda}^{-1}$
over the asymptotic value $\infty$, 
as in Theorem \ref{singularidades-algebraicas-y-logaritmicas}.
Thus,
$\infty \in \CW_t$ is an asymptotic value
with multiplicity $s$. 
Considering the universal cover 
$\Delta$ of $\CW_z \backslash 
\mathcal{Z}_{R}$, 
Corollary
\ref{cubierta-universal-singularidades}.3 
and Theorem \ref{puntos-ideales-en-terminos-de-X}.2.B.a.i applies.
\end{example}

%%%%%%%%%%%%%%%%%%%%%%%%%%%%%%%%%%%%%%%%%

\section{Holomorphic families and sporadic examples}
\label{ejemplos-seccion}

%%%%%%%%%%%%%%%%%%%%%%%%%%%%%%%%%%%%%%%%%%%

\subsection{Exponential families}
\label{familias-exponenciales}
The family of entire functions with 
at most a finite number of logarithmic singularities is a cornerstone of the 
theory of entire functions.
A first analytic characterization due
to R. Nevanlinna is the following.

\begin{theorem*}[\cite{Nevanlinna1} Ch.~XI]
Entire functions $\Psi_X$ 
with  degree ${\tt p}-2$ polynomials 
as Schwarzian derivatives are 
precisely functions that have ${\tt p}$ 
logarithmic singularities.
\end{theorem*}

Also recall 
the pioneering work of E. Hille \cite{Hille}
and M. Taniguchi \cite{Taniguchi1}, \cite{Taniguchi2};
see R. Devaney \cite{Devaney} \S10 for a modern study.
For the  relations with the theory of the
linear differential equation 
$y^{\prime \prime} - P(z)y =0$, see 
\cite{Steinmetz} pp.\,156--157. 
We consider the family 
\begin{equation*}
%\label{familia-srd}
\E(s, r,d)= \left\{
X(z) = \frac{Q(z)}{P(z)} \e^{E(z)} \del{}{z}
\ \Big\vert \
Q, \, P,  \, E \in \CC[z] \hbox{ of degree }
s, r, \, d \geq 1 
\right\}.
\end{equation*}

\noindent
Each $\E(s,r,d)$ is a 
holomorphic family of 
complex dimension $s+r+d+1$. 
Note that the functions $\Psi_X \in \E(0, r, d)$ 
are in the Speiser class, 
\emph{i.e.}
entire functions with a finite number
of critical and asymptotic values.
The vector fields 

\centerline{
$X(z)=\e^{az^2+bz+c}\del{}{z} \in \E(0,0, 2)$ 
\ and  \
$X(z)=\e^{z^d}\del{}{z} \in \E(0,0,d)$, for $d \geq 3$, 
}

\noindent 
were studied
by K. Hockett {\it et al.} \cite{HockettRamamurti}
using real vector field methods. 
In \cite{AlvarezMucino2} and \cite{AlvarezMucinoII}, the
families $\E(0,r,d)$ are examined and described using 
combinatorial methods.
Examples of phase portraits of $\Re{X}$,
for $X$ in $\E(0,0,1)$ and $\E(0,2,3)$ can be found in 
Figure \ref{flores-elipticas-hiperbolicas} 
and 
\cite{AlvarezMucino2}, \cite{AlvarezMucinoII},
for $X$ in $\E(1,0,1)$ in Figure \ref{flujo}, and for $X$ in $\E(2,0,1)$ in 
Figure \ref{campos-Delta}. 

Recall our convention from Equation 
\eqref{residuo-formula-general}, 
that the residue $Res(X,z_0)$
of a vector field $X$ at $z_0$ is 
the residue of the 1--form $\omega_X$ at $z_0$.
Let 
\begin{equation}
\label{campo-s-r-d}
X(z)=
\lambda 
\dfrac{\prod\limits_{j=1}^{m_0 + m_R} (z-q_j)^{\mu_j} }
{\prod\limits_{\iota=1}^n (z-p_\iota)^{\nu_\iota} }
\e^{E(z)} \del{}{z} 
\, \in\E(s,r,d), \ \lambda\in\CC^*,
\end{equation}

\noindent
where
$m_0$ denotes the number of zeros with zero residue, 
\\
$m_R$ denotes the number of zeros with nonzero residue,
\\
$n$ denotes the number of poles; 
$r=\sum_{\iota=1}^{n}\nu_\iota$
and
$s=\sum_{j=1}^{m_0+m_R}\mu_j$.

\begin{theorem}[The families $\E(s,r,d)$]
\label{familias-s-r-d}
Let 
\begin{equation*}
%\label{Psi-de-E(s,r,d)}
\Psi_{X} (z)= \int^z \frac{P(\zeta)}{Q(\zeta)} 
\e^{- E(\zeta)} d\zeta
\end{equation*}
be the additively automorphic singular complex analytic function 
arising from $X \in \E(s,r,d)$.

\begin{enumerate}[label=\arabic*),leftmargin=*]

\item
The function
$\Psi_X$ has $n+ m_0$ critical values ($n$ of them are finite) and 
$2d + m_R$ asymptotic values
(counted with multiplicity); 
$d$ over points in $\CC_t$ 
and $d+m_R$ over $\infty\in\CW_t$.

\item 
All the singularities of $\Psi_X^{-1}$ are separate
(algebraic, logarithmic or $\star$--transcenden\-tal). 

\item 
There is a hyperbolic tract over
each finite asymptotic value and   
an elliptic tract over each
infinite asymptotic value 
corresponding to the essential transcendental singularities
of $\Psi_X^{-1}$.

\item 
There is an $\mu_j$--unbranched holomorphic log--covering
for each nonzero residue zero $q_j$ of $X$.

\item 
The isolated essential singularity at $\infty\in\CW_z$ 
is the $\alpha$ or $\omega$--limit point of an infinite 
number of incomplete trajectories.
\end{enumerate}
\end{theorem}
\begin{proof}
Step 1.
We shall apply a rational approximation argument
to $X$ in \eqref{campo-s-r-d}, as in 
\cite{Nevanlinna1} ch. XI \S3.4  
and 
\cite{AlvarezMucinoII} \S4.3. 
Recall Euler's formula for the exponential, thus 
\begin{eqnarray*}
\label{aproximacion-racional}
X_{\tt n}(z) 
\doteq 
\frac{Q(z)}{P(z)
\left(1- \frac{E(z)}{{\tt n}} \right)^{\tt n} } \del{}{z} 
& _{\stackrel{ \xrightarrow{\hspace{30pt}} }{ 
{\tt n} \to\infty  }} & 
\frac{Q(z)}{P(z)} \e^{E(z)} \del{}{z} 
\doteq
X(z),
\\
\Psi_{X_{\tt n}}(z) 
\doteq
\int_{z_{\tt o}}^z \frac{P(\zeta)}{Q(\zeta)} 
\left(1- \frac{E(\zeta)}{{\tt n}} \right)^{\tt n} d\zeta
& _{\stackrel{ \xrightarrow{\hspace{30pt}} }{ {\tt n} \to\infty }} & 
\int_{z_{\tt o}}^z \frac{P(\zeta)}{Q(\zeta)} \e^{-E(\zeta)} d\zeta
\doteq 
\Psi_X(z) ,
\end{eqnarray*}
with the convergence being uniform on compact sets.

\noindent
In accordance with Equation \eqref{campo-s-r-d},
$X$ has the following features:
\begin{itemize}[label=$\bigcdot$,leftmargin=*]
 
\item
$n$ poles at the roots $\{ p_\iota \}_{\iota=1}^{n}$ of $P(z)$ with multiplicity $\{\nu_\iota\}$,
where $r=\sum_{\iota=1}^{n}\nu_\iota$;

\item
$m_0$ zeros with zero residue, at roots $\{ q_j \}_{j=1}^{m_0}$ of $Q(z)$ with multiplicity $\{\mu_j\}$,
where $s_0=\sum_{j=1}^{m_0}\mu_j$,

\item
$m_R$ zeros with nonzero residue, at roots $\{ q_j \}_{j=m_0 +1}^{m_0 + m_R}$ of $Q(z)$ 
with multiplicity $\{\mu_j\}$,
where $s_R=\sum_{j=m_0 + 1}^{m_0 + m_R} \mu_j$, and

\item
an isolated essential singularity at $\infty\in\CW_{z}$, 
the residue $Res(X,\infty)\in\CC$ 
may or not be zero.
\end{itemize}

\smallskip

Since the convergence $X_{\tt n}\to X$ is uniform, we may assume that 
for sufficiently large ${\tt n}$, 
the succession $\{ X_{\tt n} \}$, 
shares the following features with $X$:

\begin{itemize}[label=$\bigcdot$,leftmargin=*]

\item
The $n$ poles $\{ p_\iota \}_{\iota=1}^{n}$, arising from the factor $P(z)$, and 
the $m_0 + m_R$ zeros $\{ q_j \}_{\ell=1}^{m_0 + m_R}$, arising from the factor $Q(z)$,
are fixed (they do not depend on ${\tt n}$);
these poles and zeros coincide with those of
$\omega_X$.

\item
However the $d$ poles $\{ \widehat{e}_\sigma({\tt n}) \}_{\sigma=1}^{d}$ of $X_{\tt n}$,
each of multiplicity ${\tt n}$, arising from the factor 
$\left(1- {E(z)}/{{\tt n}} \right)^{\tt n}$, tend towards $\infty\in\CW_z$ as ${\tt n}\to\infty$. 

\item 
The phase portrait of $\Re{X_{\tt n}}$ at 
$\widehat{e}_\sigma({\tt n})$ consists of $2{\tt n} + 2$ hyperbolic sectors.

\item
$X_{\tt n}$ has a zero
of multiplicity $r-s+d{\tt n}+2$ at $\infty\in\CW_{z}$, 
the residue $Res(X_{\tt n},\infty)$ of this zero may or not be zero, 
however if $Res(X, \infty)\neq 0$
we may assume that $Res(X_{\tt n},\infty) \neq 0$.

\item
the phase portrait of $\Re{X_{\tt n}}$ at $\infty\in\CW_z$ consists of $2(r-s+d{\tt n})+2$ elliptic sectors
if $Res(X_{\tt n},\infty) = 0$ or \\
$2(r-s+d{\tt n})+2$ elliptic sectors and a parabolic sector
if $Res(X_{\tt n},\infty) \neq 0$.
\end{itemize}

\noindent
In the limit, when ${\tt n}\to\infty$, the zero at $\infty\in\CW_z$ and the poles 
$\{\widehat{e}_\sigma({\tt n}) \}_{\sigma=1}^{d}$ coalesce, forming an essential singularity of
$X$ at $\infty\in\CW_z$.
A careful examination of the phase portraits of $\Re{X_{\tt n}}$ as ${\tt n}\to \infty$ shows that
$\Re{X}$ has 
\begin{itemize}[label=$\bigcdot$,leftmargin=*]
\item
$d$ elliptic tracts and 

\item
$d$ hyperbolic tracts
\end{itemize}
angularly equidistributed about $\infty\in\CW_z$, see 
Figure 2 in \cite{AlvarezMucinoII}.

\smallskip

Step 2. 
Now let us consider the succession of additively automorphic functions $\{ \Psi_{X_{\tt n}} \}$ 
and its limit function
$\Psi_X$. 
We shall choose a fundamental domain $\Lambda$ (recall \S\ref{construccion-region-fundamental}).
Let $\Gamma=\cup_{\kappa=1}^{m_R} \gamma_\kappa$ be a simple path that passes through the 
$m_R$ poles $\{ q_\ell \}_{\ell=m_0 +1}^{m_0 + m_R}$ with non zero residue and let it end at
the pole at $\infty$ (which may or not have zero residue), \emph{i.e.} $\gamma_{m_R}$ has 
$\infty\in\CW_z$ as one of its extrema.
Furthermore, for large enough $N>>0$ we may assume that $\Gamma$ avoids all the singularities 
of $\omega_{X_{\tt n}}$ for all ${\tt n}>N$.
In this way, the fundamental domain 
$\Lambda=(\CW_z\backslash\Gamma) \cup _{\kappa=1}^{m_R} \gamma_{\kappa +}$ 
can be used with $\Psi_X$ and $\Psi_{X_{\tt n}}$ for all ${\tt n}>N$, so that we obtain the single--valued
functions $\Psi_{X_{\tt n},\,\Lambda}$ that converge uniformly on compact sets of $\Lambda$ to
$\Psi_{X,\,\Lambda}$.

\noindent
The succession $\{ \Psi_{X_{\tt n},\,\Lambda} \}$ and the function $\Psi_{X,\,\Lambda}$ 
have the following common properties.

\begin{itemize}[label=$\bigcdot$,leftmargin=*]

\item 
$n+m_0$ critical values corresponding to the poles and zeros with zero residue of $X$
arising from the factors $P(z)$ and $Q(z)$ respectively,

\item 
$m_R$ $\star$--transcendental singularities of $\Psi_{X_{\tt n},\,\Lambda}^{-1}$ 
arising from the 
zeros with nonzero residue of $X$,

\end{itemize}

However, the succession $\{ \Psi_{X_{\tt n},\,\Lambda} \}$ has:

\begin{itemize}[label=$\bigcdot$,leftmargin=*]

\item $d$ 
finite critical values corresponding to the $d$ zeros of $\omega_{X_{\tt n}}$ arising from the
factor $\left(1- {E(z)}/{{\tt n}} \right)^{\tt n}$,

\item 
the point $\infty\in\CW_z$ is a critical point of $\Psi_{X_{\tt n},\,\Lambda}$ with critical value $\infty$
or a $\star$--transcendental singularity of $\Psi_{X_{\tt n},\,\Lambda}^{-1}$ 
with asymptotic value $\infty$, depending on whether 
$Res(X_{\tt n},\infty)$ is zero or nonzero.
\end{itemize}

\smallskip

Step 3. Identification of the singularities. 
Clearly, $\Psi_X$ has a finite number of singular values, hence the singularities of $\Psi_X^{-1}$ are 
separate. 
The poles and zeros with zero residue of $X$ correspond to algebraic singularities of $\Psi_X^{-1}$.
The zeros with nonzero residue of $X$ correspond to $\star$--transcendental singularities
of $\Psi_X^{-1}$ (and hence an $\mu_j$--unbranched holomorphic log--covering at each 
nonzero residue zero of $X$).
Moreover, the limit function $\Psi_{X}$ has an essential singularity at $\infty\in\CW_z$ and
the phase portrait of $X$ shows $d$ elliptic tracts and $d$ hyperbolic tracts equidistributed about 
$\infty\in\CW_z$. 
Each elliptic tract accepts a class of asymptotic paths with asymptotic value 
$\infty\in\CW_t$ for $\Psi_{X,\,\Lambda}$;
each hyperbolic tract accepts a class of asymptotic paths with finite asymptotic value 
$a_\sigma\in\CC_t$ for $\Psi_{X,\,\Lambda}$.

\noindent
Finally, each hyperbolic tract provides an infinite number of incomplete trajectories with
the essential singularity $\infty\in\CW_z$ being their $\alpha$ or $\omega$ limit point.
\end{proof}

\begin{example}[$X\in\E(1,0,d)$ using rational approximation]
\label{aproximacion-E10d}
Let

\centerline{
$
X(z)= z \e^{z^d} \del{}{z}\in\E(1,0,d), 
\ \ \
\text{ for }d\geq1.$
}

\noindent
The corresponding distinguished parameter is 

\centerline{
$\Psi_X(z)=
{\displaystyle \int^z} \frac{\e^{-\zeta^d}}{\zeta} d\zeta
=
-\frac{1}{d} \Gamma \left(0,z^d\right),$}

\noindent
where $\Gamma (a,z)\doteq\int _z^{\infty } \zeta^{a-1} e^{-\zeta} d\zeta$ is the incomplete Gamma function.

\noindent
Euler's formula provides the approximation of $X$ by the vector fields

\centerline{
$X_{\tt n}(z)=\dfrac{ z }{ \big( 1 - \frac{z^d}{{\tt n}} \big)^{\tt n} }\del{}{z},
\ \ \ 
\text{ for } {\tt n} \geq 1,$
}

\noindent 
so

\centerline{
$\Psi_{X_{\tt n}} (z)=
{\displaystyle \int^{z}_{0} } 
\frac{\big( 1 - \frac{\zeta^d}{{\tt n}} \big)^{\tt n}}{\zeta} \, d\zeta
=
-\frac{1}{d (n+1)}
\left(1-\frac{z^d}{n}\right)^{n+1} \, _2F_1\left(1,n+1;n+2;1-\frac{z^d}{n}\right),$
}

\noindent
where $ _2F_1$ is the classical Gauss's hypergeometric function, 
see for instance \cite{Oberhettinger} ch.~15.

\noindent
The zeros of $X_{\tt n}$ are $0$, of order $1$, and $\infty\in\CW_z$ of order ${\tt n}d+1$;
with residue $1$ and $-1$ respectively.

\noindent
The poles of $X_{\tt n}$ are
$\{ \widehat{e}_\sigma({\tt n}) \doteq e^{\frac{2 i \pi \sigma}{d}} {\tt n}^{1/d} \}_{\sigma=1}^{d}$, 
of order $-{\tt n}$. 
Of course the poles of $X_{\tt n}$ are the critical points of $\Psi_{X_{\tt n}}$.

\noindent
Choosing $\Lambda=(\CW_z\backslash [-\infty,0] )\cup (-\infty,0)_+$, we can 
compute the critical values of $\Psi_{X_{\tt n},\,\Lambda}$

\centerline{
%\begin{equation}\label{val-criticos-aproximacion}
$\widetilde{e}_\sigma({\tt n}) \doteq \Psi_{X_{\tt n},\,\Lambda} (e^{\frac{2 i \pi \sigma}{d}} {\tt n}^{1/d})
= 0$, 
\ \ \
for $\sigma=1,\ldots,d$.
}

\noindent
Moreover, the finite asymptotic values $a_{\sigma}$ of 
$\Psi_{X,\,\Lambda}(z)=\int_{0}^{z} \zeta^r \e^{-\zeta^d} d\zeta$, are given by 

\centerline{
$a_{\sigma} = \lim\limits_{{\tt t}\to\infty} -
\frac{1}{d}
\Gamma(0,\alpha_\sigma^d({\tt t}) ) = 0 \in\CC_{t},
\ \ \
\text{ for } \sigma=1,\ldots,d.$
}

\noindent
We conclude that the critical values $\widetilde{e}_\sigma ({\tt n})$ converge, to the finite asymptotic values.

\noindent
Furthermore, travelling along the asymptotic paths 
$\alpha_{\sigma}({\tt t})={\tt t} \e^{2 i \pi (\sigma -d) / d} \e^{i\pi/d}$,
that arrive at $\infty\in\CW_{z}$ with angle $\frac{2\pi}{d}(\sigma-d)+\frac{\pi}{d}$, for
$\sigma=d+1,\ldots,2d$,
we see
that $\Psi_{X_{\tt n},\,\Lambda}(z)$ converges to $\infty\in\CW_t$.
Thus there are $d$ 
(classes of) asymptotic paths that give rise to the asymptotic value $\infty\in\CW_t$.

\noindent
Using the techniques\footnote{
The images were obtained using simple code written in the Julia\texttrademark\ 1.9.3 language 
which is particularly well suited for numerical computation. 
The code is freely available upon request.
} presented in \cite{Alvarez-Mucino-Solorza-Yee}, 
we visualize the phase portraits of $\Re{X_{\tt n}}$ and $\Re{X}$ 
for $s=1$ and $d=5$.
The poles $\{ e^{\frac{2 i \pi \sigma}{d}} {\tt n}^{1/d} \}_{\sigma=1}^d$ 
are portrayed as green dots.
Note that at $\infty\in\CW_{z}$ there is a zero of $X_{\tt n}$ of order 
exactly $d{\tt n}+1$.
See Figure \ref{figEjemploE105},
for the case $d=5$.
\end{example}
%%%%%%%%%%%%%%%%%%%
\begin{figure}[htbp]
\begin{center}
\includegraphics[height=0.9\textheight]{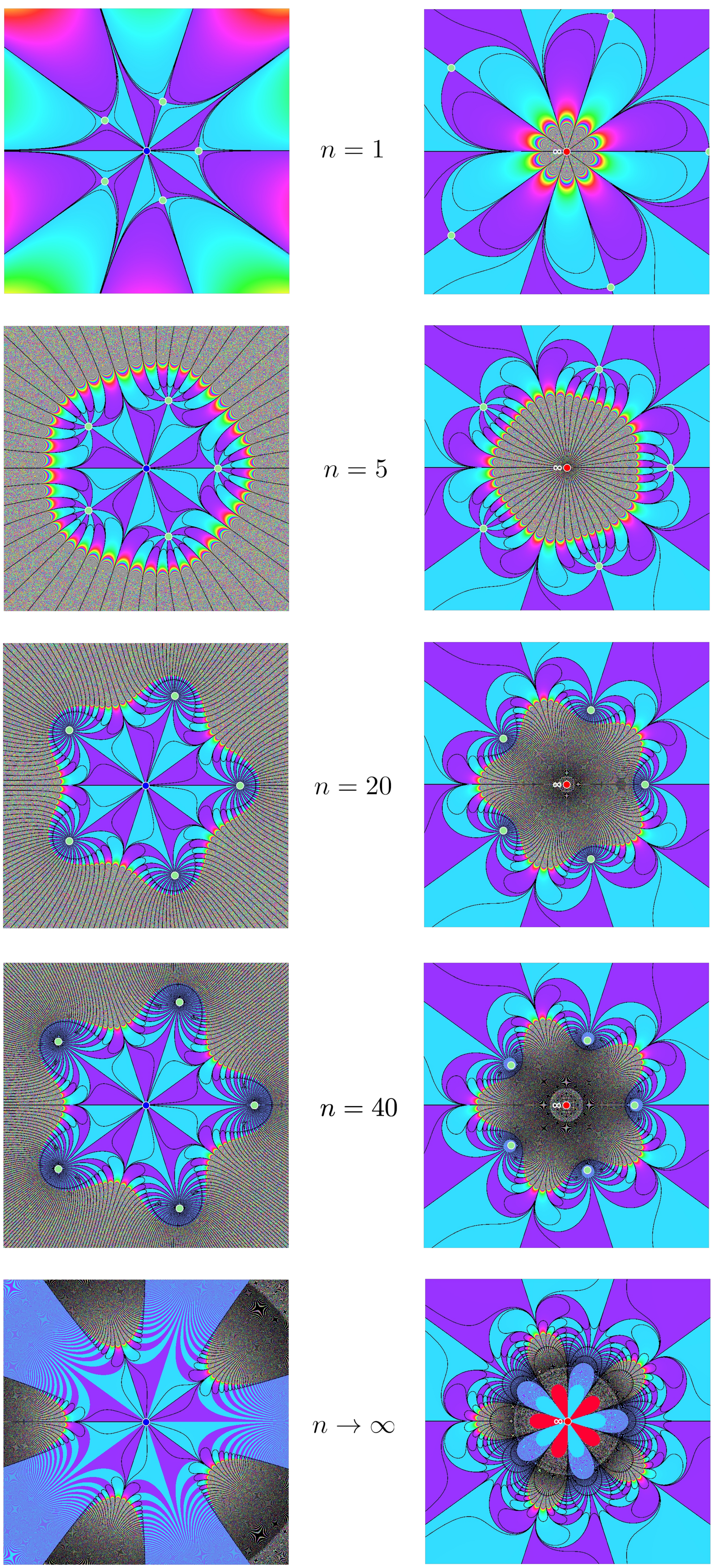}
\caption{
Phase portraits of $\Re{X_{\tt n}}$ for ${\tt n}=1, 5, 20, 40$ converging to $\Re{X}$ 
with $X\in\E(1,0,5)$ as in Example \ref{aproximacion-E10d}. 
Left hand side portrays a neighbourhood of the origin, and the right hand side a neighbourhood
of $\infty\in\CW_{z}$.
Note that by approaching $\infty\in\CW_{z}$ along paths that avoid the poles 
$\{ e^{\frac{2 i \pi \sigma}{d}} {\tt n}^{1/d} \}_{\sigma=1}^d$ (green dots), the value of 
$\Psi_{\tt n}(z)$ converges to $\infty\in\CW_t$. 
Images are high resolution, zooming is suggested particularly for high values of $n$.
}
\label{figEjemploE105}
\end{center}
\end{figure}
%%%%%%%%%%%%%%%%%%%

\begin{example}[$X\in\E(1,0,1)$]
\label{Ejemplo-multivaluado}
Let us consider the vector field

\centerline{
$X(z)=z \e^z \del{}{z}\in\E(1,0,1),$
}

\noindent
whose phase portrait of $\Re{X}$ on $\CW_z$
is sketched in Figure \ref{flujo}.
Its global distinguished parameter 

\centerline{
$\Psi_X(z)=
{\displaystyle \int_1^z} 
\frac{\e^{-\zeta} }{\zeta}d\zeta$
}

\noindent 
is  a multivalued additively automorphic 
singular complex analytic function.
In this case $\mathcal{S}_{R}=\{0,\infty\}$.

From the perspective of a fundamental region
\S\ref{construccion-region-fundamental}, 
we have that

\centerline{
$\Lambda=(\CW_z \backslash \gamma) \cup \gamma_+$,
}

\noindent
where $\gamma $ is a path as in Figure \ref{flujo}.
The fundamental 
region is

\centerline{$\Omega= \left\{\big(z,\Psi_X(z)\big) \ \vert\ 
z\in\Lambda \right\} \subset \R_X.$}

%%%%%%%%%%%%%%%%%%%%%%%%%%%%%%%%%%%%%%
\begin{figure}[h]
\begin{center}
\includegraphics[width=0.75\textwidth]{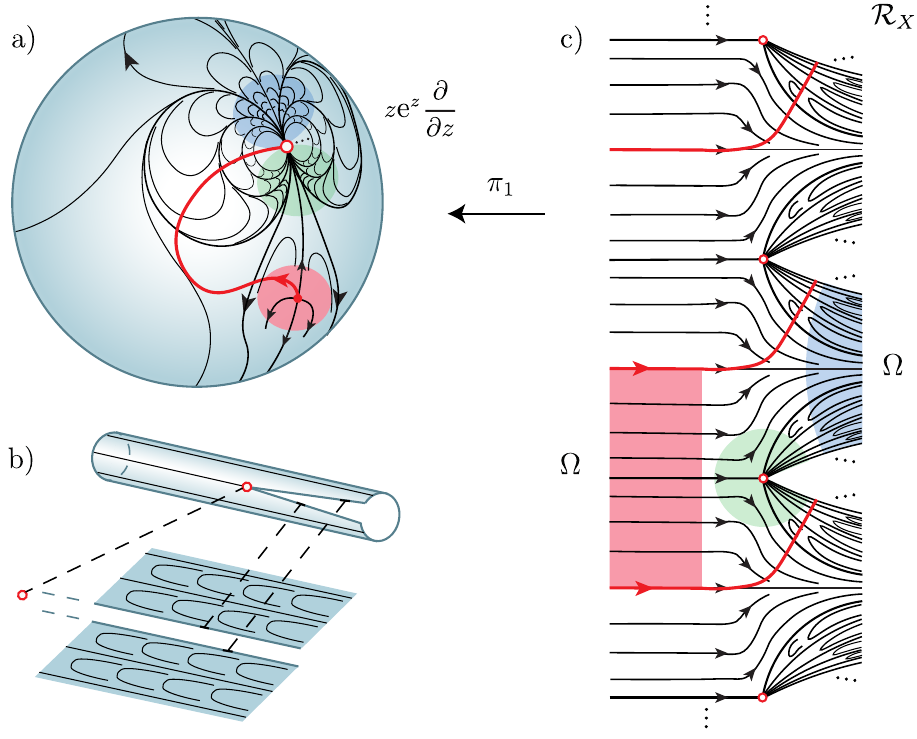}
\caption{
%flujo-2.pdf.
Let $X(z)=z \e^z \del{}{z}$, 
the 
singularities with nonzero residue
are $\mathcal{S}= \mathcal{S}_{R} =\{0, \, \infty\}$. 
(a) The essential singularity at $\infty$
is represented by a small red circle.
Here $\Gamma \subset \CW_z$ (in red)
is a path from $0$ to $\infty$.
(b) The flat metric $(\CW_z, g_X)$
is obtained from an infinite number of 
Reeb components
as in $\e^z\del{}{z}$
and a cylinder.  
(c) The Riemann surface $\R_X$ and a
fundamental region
$\Omega$ in the universal cover of
$\CW_z \backslash \mathcal{S}$ are sketched.  
Three singularities $U_{a_j}$ of $\Psi_{X,\,\Lambda}^{-1}$ 
whose neighbourhoods are 
hyperbolic and elliptic tracts
(coloured green and blue respectively),
and a $\star$--transcendental 
singularity coloured pink, arising from the
source at $z=0$. 
The colouring scheme is applied both in $\Lambda$ 
and $\Omega$.
}
\label{flujo}
\end{center}
\end{figure}
%%%%%%%%%%%%%%%%%%%%%%%%%%%%%%%%%%%%%%
\noindent
Once again,  Figure \ref{flujo} shows a sketch of 
$\Lambda$, $\Omega$
and the Riemann surface $\R_X$.
According with Theorem 
\ref{singularidades-algebraicas-y-logaritmicas},
we have three singularities of $\Psi_X^{-1}$.

\noindent
$\bigcdot$ The simple zero $z=0\in\Lambda$ has 
associated a $\star$--transcendental singularity
of $\Psi_{X,\, \Lambda}^{-1}$ over the asymptotic value 
$\infty\in\CW_t$, 
its neihgborhood 
$U_\infty(\rho)\doteq
\Psi_{X,\,\Lambda}^{-1}\big(D(\infty,\rho)\big)$ is
coloured pink in $\Omega$, see
Figure \ref{flujo}.

\noindent
The essential singularity $\infty\in\Lambda$ has 
associated two nonzero residue essential transcendental 
singularities
of $\Psi_ {X,\, \Lambda} ^{-1}$; 

\noindent 
$\bigcdot$ over the asymptotic value
$0$, 
the neighbourhood 
$U_0(\rho)\doteq
\Psi_ {X,\, \Lambda}^{-1}\big(D(0,\rho)\big)$
is a hyperbolic tract, 
coloured green in Figure \ref{flujo}, and 

\noindent 
$\bigcdot$ over $\infty$, 
the neighbourhood
$U_\infty(\rho)\doteq
\Psi_ {X,\, \Lambda} ^{-1}\big(D(\infty,\rho)\big)$
is an elliptic tract,
coloured blue in Figure \ref{flujo}.

\noindent
The last two singularities are logarithmic.

From the perspective of the universal cover
$\mathfrak{M}$ of
$\CW_z\backslash\{0,\infty\}$: 
Corollary \ref{cubierta-universal-singularidades}.4.ii
applies.

\end{example}

\subsection{Families of periodic vector fields}
\label{familias-periodicas}

On $\CW_z$ there exists a correspondence between 

\noindent 
$\bigcdot$
singular complex analytic vector fields  
$X$ on $\CW_z$ of period $T \in \CC^*$ 
with $\omega_X$ having zero residues, and

\noindent 
$\bigcdot$
singular complex analytic functions $\Psi_X$
of period $T$.

Moreover, in such a case, 

\centerline{
\label{condicion-periodica}
$\Psi_X (z)= h \big( \e^{2\pi i z/T} \big)$
}

\noindent 
is single--valued, where 
$h$ is a suitable singular complex analytic function.  

\begin{theorem}[Families of periodic vector fields with single--valued $\Psi_X$]
\label{familias-P-r}
Let $X$ be a
singular complex analytic vector field 
on $\CW_z$ arising from a function
$\Psi_X$ in the family
$$
\mathscr{P}_{\tt r}\doteq
\left\{ 
\Psi_X(z)=R\big( \e^{2\pi i z/T} \big)
\ \big\vert\ 
R:\CW\longrightarrow\CW_t \text{ rational of degree } 
{\tt r} \geq 1 
\right\}.
$$
The following assertions hold.

\begin{enumerate}[label=\arabic*),leftmargin=*]

\item
$X$ is periodic of period $T \in \CC^*$ with
a unique essential singularity at $\infty\in\CW_z$.

\item
$\Psi_X$ has 
two asymptotic values
$a_1\doteq R(0)$ and 
$a_2\doteq R(\infty)$,
counted with multiplicity.

\item
Each of the two transcendental singularities of 
$\Psi^{-1}_X$ is logarithmic. 
The corresponding exponential tracts are
\begin{enumerate}[label=\roman*),leftmargin=*]
\item 
hyperbolic tracts when the asymptotic value is finite, and
\item 
elliptic tracts when the asymptotic value is $\infty$.
\end{enumerate}

\item 
If the critical point set 
$\mathcal{C}_R \subset \CW_z$ of $R$ 
satisfies that 
$\mathcal{C}_R \backslash \{0, \infty\}\neq\varnothing$,
then $X$ has an infinite number of poles accumulating at $\infty\in\CW_z$.

\item If $\infty\in\CW_t$ is not an asymptotic value, 
then $X$ has an infinite number of zeros of multiplicty 
2 and residue zero accumulating at $\infty\in\CW_z$.

\item 
The behaviour in (4) or (5)
depends on the
configuration of the two asymptotic values and infinity:

\begin{enumerate}[label=\roman*),leftmargin=*]
\item 
(Generic case.) Three distinct points
$\{a_1,a_2,\infty\} \subset\CW_t$.

\item 
Two distinct points 
$\{a_1=a_2,\, \infty\}$.

\item 
Two distinct points 
$\{a_1,\, a_2=\infty\}$ 
\, or \,
$\{a_1= \infty, \, a_2 \}$.
	
\item 
One distinct point 
$\{a_1=a_2=\infty\}$.
\end{enumerate}

\noindent
It provides a complete 
decomposition of the family $\mathscr{P}_{\tt r}$ into four subfamilies.
\end{enumerate}
\end{theorem}

As usual, generic means an open and dense set in 
the space of parameters of $\mathscr{P}_{\tt r}$.

\begin{proof}
The space of rational functions 
$R(w)$ of degree ${\tt r} \geq 1$
is an open Zariski set in 
$\mathbb{CP}^{2{\tt r} +1}$, 
hence
$\mathscr{P}_{\tt r}$ inherits this open complex manifold structure. 

Without loss of generality, assume that the period is 
$T=2\pi i$.  
Under pullback we have a diagram 
\begin{equation}
\label{caja-de-flujo-periodicos}
(\CW_z, X) 
\stackrel{\e^z}{\longrightarrow}
\big( \CW_w, R^* \del{}{t} \big)
\stackrel{R}{\longrightarrow}
\big( \CW_t, \del{}{t} \big) \, . 
\end{equation} 

\noindent
Here $R^* \del{}{t}$ is a rational vector field 
with

\noindent 
$\bigcdot$
zeros of order $\geq 2$ and residue zero, 
at the poles of $R$, 
and 

\noindent 
$\bigcdot$ 
poles at the critical points of $R$ 
in $\CC^*_w$ with
finite critical values. 

\noindent
From the above observations,
the statements (4) and (5) follow.

Statement (1) follows from the periodicity and essential singularity 
of $\e^z$.

Statement (2) follows from noting that the asymptotic values of $\e^{z}$ are precisely 
$0$ and $\infty$, thus the asymptotic values of $\Psi_X$ are
$a_1\doteq R(0)$ and $a_2\doteq R(\infty)$.

Note that $\Psi_{X}$ is the universal cover of a
neighbourhood of the transcendental singularities 
$U_a$ of $\Psi^{-1}_X$,
hence for the asymptotic values $a=a_1, a_2$ 
and $\rho>0$ sufficiently small,
we have

\centerline{
$U_a(\rho)=\Psi_X^{-1}\big(D(a,\rho)\big)=\log\Big(R^{-1}\big(D(a,\rho)\big) \Big)$.
}

\noindent
Thus, statements (3.i) and (3.ii) follow
from Theorem \ref{singularidades-algebraicas-y-logaritmicas}.

For statement (6), 
in accordance with Diagram \ref{caja-de-flujo-periodicos}, the
behaviour of $R$ provides a sharp description
of the zeros and poles of $X$,
as well as the exponential tracts of $\Psi_X$.
A systematic description of the different subfamilies in 
$\mathscr{P}_{\tt r}$
is given by the configuration of the two asymptotic values and infinity.

\smallskip

\emph{
i) Generic case.
A three distinct point $\{a_1,a_2,\infty\}$
configuration.
}

\noindent 
Clearly, the above condition defines a generic
set in $\mathscr{P}_{\tt r}$. 
Moreover, $X$ has an infinite number of zeros of multiplicity at 
least 2 and residue zero
accumulating at $\infty\in\CW_z$, corresponding to
assertion (5).
In addition, if the critical point set of $R$ is different from $0$ or $\infty$, 
then $X$ has an infinite number of poles accumulating at $\infty\in\CW_z$; as in assertion (4).
Finally, the neighbourhoods $U_{a_1}(\rho)$ and $U_{a_2}(\rho)$ 
of the singularities of $\Psi^{-1}_X$ will be hyperbolic tracts.
See Example \ref{coszMasUno}.

\smallskip  

\emph{ii)
A two  point $\{a_1=a_2,\, \infty\}$
configuration.}

\noindent 
Since $a\doteq a_1=a_2\neq\infty$, then
$\Psi_X$ has one finite asymptotic value $a\in\CC_t$ 
of multiplicity 2, 
{\it i.e.} two logarithmic branch points over the 
same finite asymptotic value $a$. 

\noindent
By necessity,
$\Psi_X$ has at least another branch 
point over $b\in\CW\backslash\{a_1\}$, 
which can not 
be transcendental. 
Thus, $b$ must be a critical value.

\noindent
If $b =\infty$, then $X$ has an infinite number of zeros of multiplicity 
at least 2 and residue zero
accumulating at $\infty\in\CW_z$.

\noindent
If $b \neq \infty$, then $X$ also has an infinite number of poles 
accumulating at $\infty\in\CW_z$.

\noindent
Finally, the two neighbourhoods $U_{a_1}(\rho)$ and $U_{a_2}(\rho)$ 
(over the same asymptotic value $a=a_1=a_2$)
of the singularities of $\Psi^{-1}_X$ will be hyperbolic tracts.
Let
\begin{equation}\label{funcion-racional}
R(w)=
\frac{c_r w^r + c_{r-1} w^{r-1} + \cdots + c_1 w + c_0}
{b_s w^s + b_{s-1} w^{s-1} + \cdots +b_1 w + b_0},
\ \ \ {\tt r}= \max \{r,\, s \}, 
\end{equation}
\noindent 
be a rational function.
A straightforward calculation shows that either

\centerline{
%\label{caso2-1}
$r=s \ \hbox{ and }  \
\dfrac{c_r}{b_r}=\dfrac{c_0}{b_0},
\ \hbox{ so } \  a = R(\infty)=R(0)= \dfrac{c_0}{b_0} \in\CC^*_t,$

}

\noindent
or

\centerline{
%\label{caso2-2}
$s > r 
\ \hbox{ and } \
a = R(\infty)=R(0)=0, 
\text{ in particular }c_0=0 \text{ in }
\eqref{funcion-racional}.$
}

\noindent
See Example \ref{cscz}.

\smallskip

\emph{
iii)
A two 
point $\{a_1,\, a_2=\infty\}$ 
\, or \,
$\{a_1= \infty, \, a_2 \}$, configuration.}

\noindent
The vector field $X$ will not have any zeros. 
If $\mathcal{C}_{R}\backslash\{0,\infty\} \neq \varnothing$, 
then $X$ has an infinite number of poles accumulating at $\infty\in\CW_z$. 
One of the neighbourhoods of the singularities of $\Psi^{-1}_X$ will
be a hyperbolic tract and the other will be an elliptic tract.
In particular, Equation \eqref{funcion-racional} requires
\begin{equation}\label{caso3}
s<r
\ \hbox{ and } \
a_1=R(0)=\dfrac{c_0}{b_0}\in\CC_t, 
\ a_2=R(\infty)=\infty.
\end{equation}
\noindent
The other option is given by considering the rational function
$\widehat{R}(w)=R(1/w)$ with $R$ as in \eqref{caso3}, 
so $a_1=\widehat{R}(0)=\infty$ and $a_2=\widehat{R}(\infty)\in\CC_t$.
See Example \ref{cscz}.
\smallskip

\emph{ iv)
A one  point $\{a_1=a_2=\infty\}$
configuration.}

\noindent 
Note that, $X$ will have no zeros,
assertion (5) of the 
Theorem  \ref{familias-P-r} does not occur. 
If $\mathcal{C}_{R}\backslash\{0,\infty\} \neq \varnothing$, 
then $X$ has an infinite number of poles accumulating at $\infty\in\CW_z$. 
The two neighbourhoods $U_{a_1}(\rho)$ and $U_{a_2}(\rho)$ 
of the singularities of $\Psi^{-1}_X$ will be elliptic tracts.
In this case

\centerline{%\label{caso4}
$s<r
\ \hbox{ and } \
R(0)=R(\infty)=\infty\in\CW_t,
\text{ in particular }b_0=0 \text{ in }
\eqref{funcion-racional}.$
}

\noindent
See Example \ref{no-finite-asymptotic-values}.
\end{proof}

\begin{example}[Two 
logarithmic singularities over finite asymptotic values]
\label{coszMasUno}
The vector field 

\centerline{$X(z) = -i \big(\cos(z)+1\big)\del{}{z}=
-2i \cos^2 (\frac{z}{2}) \del{}{z}$ }

\noindent
is such that 

\centerline{$\Psi_X(z)=i\tan(\frac{z}{2})
= \frac{\e^{iz}-1}{\e^{iz}+1}
$,}

\noindent 
so it falls under the hypothesis of 
Theorem \ref{familias-P-r}, 
Case 6.i. 
Thus, $R(w)=(w-1)/(w+1)$ 
and $-1,1\in\CC_t$ are the finite
asymptotic values of $\Psi_X$.
There are two logarithmic singularities of $\Psi^{-1}_X$
over $-1$, $1\in\CC_t$,
whose neighbourhoods are hyperbolic tracts.
In this case, 
$X$ has an infinite number of double zeros
and no poles.
See Figure \ref{album-afin-AAP}.a.
\end{example}

\begin{example}[]
\label{cscz}
1. The pair 

\centerline{$
X(z)=-2 i \dfrac{\sin^2(z)}{\cos(z)} \del{}{z}$
\ and \ 
$ \Psi_X(z) =\dfrac{1}{2 i}\dfrac{1}{\sin(z)} $ }

\noindent 
falls under the hypothesis of 
Theorem \ref{familias-P-r}, Case 6.ii. 

\noindent 2.
Let $P\in\CC[z]$ be a non constant polynomial, 
the pair

\centerline{
$ X(z)= \dfrac{1}{\e^{z}P^\prime ( \e^{z} )} \del{}{z}$
\ and \
$\Psi_X(z)=P(\e^{z}) $ }

\noindent
falls under the hypothesis of Theorem \ref{familias-P-r}, Case 6.iii.

\noindent In both cases, 
details are left to the interested reader.
\end{example}

\begin{example}[Two 
logarithmic singularities over $\infty$]
\label{no-finite-asymptotic-values}
The vector field 

\centerline{$X(z) = \sec(z)\del{}{z}$ }

\noindent
is such that 

\centerline{$\Psi_X(z)=\sin(z)=
%\dfrac{w-w^{-1}}{2i}\circ \e^{iz}
\dfrac{\e^{iz}-\e^{-iz}}{2i}
$,} 

\noindent 
so it falls under the hypothesis of 
Theorem \ref{familias-P-r}, Case 6.iv.
Since $R(w)=(w-w^{-1})/2i$ takes 
$0,\infty\mapsto\infty$, 
thus $\infty\in\CW_t$ is an
asymptotic value of multiplicity 2 and
$\Psi_X$ has no finite asymptotic values.
There are two logarithmic singularities of $\Psi^{-1}_X$
over $\infty\in\CW_t$,
whose neighbourhoods are elliptic tracts. 
Since 
$\int_{\pi/2 }^{(3/2) \pi} \cos(\zeta) d\zeta$
is finite, 
the incomplete trajectories 
$z_k ( {\tt t} ): (a, b) \subsetneq \RR \to \CC_z$ of $X$, 
having as images the 
real segments 
$(\pi/2 + k\pi,(3/2) \pi + k\pi ) \subset \RR$,
$k \in \ZZ$,
are located
at the poles $\{ (1/2) \pi + k \}$ of $X$.
See Figure \ref{album-afin-AAP}.b.
\end{example}

%%%%%%%%%%%%%%%%%%%%%%%%%%%%%%%%%%%%%%

\subsection{Sporadic examples}
\label{sporadic-examples}
In this section we explore the limits of Theorem \ref{teo-logaritmicas-separada}
by considering examples of single and multivalued additively 
automorphic functions $\Psi_X$, emphasizing the geometrical 
richness of the vector field perspective. 
In particular, how the knowledge of the phase portrait of $\Re{X}$ helps in
determining and understanding the type of singularities of $\Psi_X^{-1}$.

%%%%%%%%%%%%%%%%%%%%
\begin{example}[An infinite number of separate
singularities and no nonseparate singularities.]
\label{Ejemplo-ExpSin}
Let

\centerline{
$\Psi_X (z) = \e^{\sin(z)}$. 
}

\noindent
The associated vector field is

\centerline{
$X(z)=
\dfrac{1}{\Psi_X^\prime (z)} \del{}{z}=
\dfrac{\e^{-\sin(z)}}{\cos(z)} \del{}{z}$.
}

\noindent
See Figure \ref{figExpSin}.
The critical points of $\Psi_X$ are 
$\big\{ \frac{\pi}{2} (2k+1) \ \vert \ k\in\ZZ \big\}$ and its critical values are $\{ \e , \e^{-1} \}$.

\noindent
The asymptotic values of $\Psi_X$ are $0$, $\infty$. 
Clearly, $0$ and $\infty$ are isolated asymptotic values, so the transcendental singularities 
are logarithmic.

\noindent 
By examining the phase portrait\footnote{
Note that, since $\Psi_X(z)=\e^{\sin(z)}$ 
the phase portrait of 
$\Re{X}$ is the pullback via 
$\e^{w}$ of the phase portrait of
$\Re{\sec(z)\del{}{z}}$, 
see Example 
\ref{no-finite-asymptotic-values}. 
}
of $\Re{X}$, 
it is clear that 
there are an infinite number of logarithmic singularities.

\noindent
Let $k\in\ZZ$, the
asymptotic paths 
$\alpha_{a_{k\pm}}( {\tt t} )=(2k+1)\frac{\pi}{2} \pm i {\tt t} $, 
are associated to the asymptotic values 

\centerline{
$a_{k\pm}=\begin{cases}
  0_{k\pm}=0, \text{ for odd }k,\\
  \infty_{k\pm}=\infty, \text{ for even }k.
\end{cases}$
}

\noindent
Their neighbourhoods are

\centerline{
$U_{a_{k\pm}}(\rho)=\{ z\in\CC_z \ \vert\ \abs{\Re{z}- (2k+1)\frac{\pi}{2}} < \pi , 
\ \pm\Im{z}>R(\rho) \}$, 
}

\noindent
for appropriate $R(\rho)$. 
Note that the neighbourhoods $U_{0_{k\pm}}$ 
are hyperbolic tracts (coloured green in Figure \ref{figExpSin}), the 
plus sign indicating the ones on the top, the minus sign indicating the ones on the 
bottom. 
Similarly the neighbourhoods $U_{\infty_{k\pm}}$ are elliptic tracts 
(coloured blue in Figure \ref{figExpSin}).

\noindent 
Additionally note that
along the real axis $\Psi_X(z)$ does not converge as $z\to\pm\infty$, 
{\it i.e.} there is no asymptotic path (or value) along the real axis.

\noindent
There are no other singularities of $\Psi_X^{-1}$, even though
$\infty\in\CW_z$ is a nonisolated essential singularity of $X$.

%%%%%%%%%%%%%%%%%%%%%%%%%%%%%%%%%%%%%%
\begin{figure}[h]
\begin{center}
\includegraphics[width=0.65\textwidth]{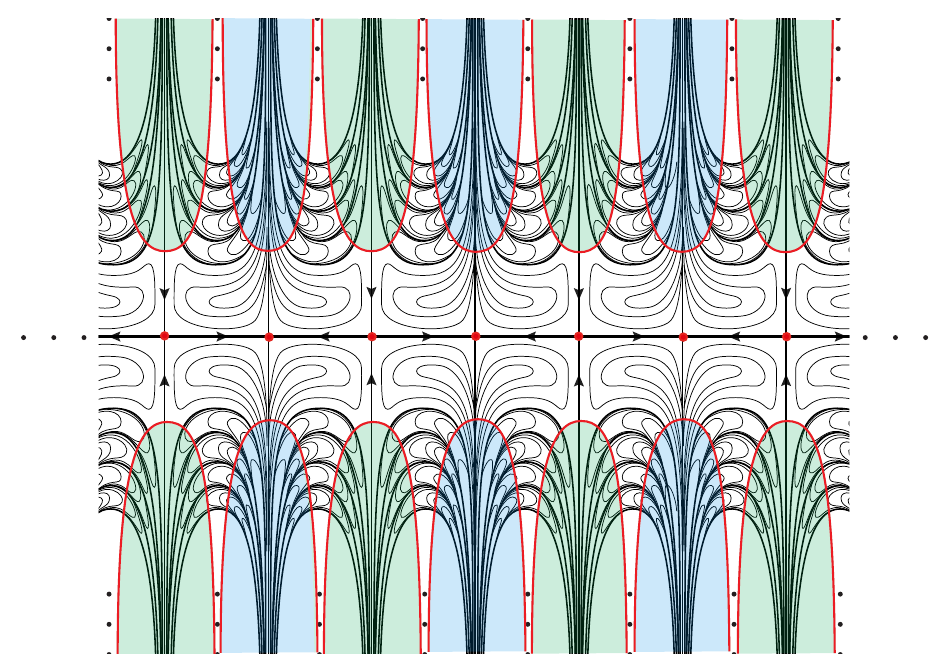}
\caption{
%ExpSin-Afin.pdf.
Example \ref{Ejemplo-ExpSin},
function $\Psi_X(z)= \e^{\sin(z)}$ 
and phase portrait of the corresponding vector field 
$X(z)=\sec(z) \e^{-\sin(z)} \del{}{z}$. 
There are infinite hyperbolic and 
elliptic tracts, coloured green and blue respectively.
}
\label{figExpSin}
\end{center}
\end{figure}
%%%%%%%%%%%%%%%%%%%%%%%%%%%%%%%%%%%%%
\end{example}

%%%%%%%%%%%%%%%%%%%%%%%%%%%%%%%5

\begin{example}[Direct nonlogarithmic singularity of $\Psi^{-1}_X$]
\label{Ejemplo-ExpSin-z}
Consider the function

\centerline{
$\Psi_X (z) = \e^{\sin(z)-z}$,}

\noindent
studied in
\cite{Langley-2}.  
The associated vector field is

\centerline{ 
$X(z)=
\frac{1}{\Psi_X^\prime (z)}  \del{}{z}
= \dfrac{\e^{\sin(z)-z}}{\cos(z)-1} \del{}{z}$.} 

\noindent 
The critical points of $\Psi_X$  are 
$\{p_k\doteq 2\pi k  \ \vert \ k \in \ZZ \}$, 
with critical values 
$\{\widetilde{p}_k\doteq \e^{-2\pi k} \ \vert \ k \in \ZZ  \}$.
See Figure \ref{ExpSin-z}.
The asymptotic values of $\Psi_X$ are $0$ and $\infty$.
\emph{Note that they are nonisolated singular values}.
Since $\Psi_X$ is entire and these are omitted values, 
the corresponding transcendental singularities are direct.

\noindent
Once again, from the phase portrait of $\Re{X}$ there seems to be an infinite 
number of logarithmic singularities.
As in the previous example,
for each $k\in\ZZ$, 
the asymptotic paths 
$\alpha_{k\pm} ({\tt t} )=(2k+1)\frac{\pi}{2} \pm i {\tt t} $, 
where ${\tt t} >0$,
are associated to the asymptotic values 

\centerline{
$a_{k\pm}=\begin{cases}
0_{k\pm}=0, \ \ \ \text{ for odd }k,
\\
\infty_{k\pm}=\infty, \text{ for even }k.
\end{cases}$
}

\noindent
Their neighbourhoods are

\centerline{
$U_{a_{k\pm}}(\rho)=
\left\{ 
z\in\CC_z \ \vert\ \abs{\Re{z} - (2k+1)\frac{\pi}{2}} < \pi , \ \pm\Im{z}>R(\rho) \right\}$, 
}

\noindent
for appropriate $R(\rho)$.
As before, the neighbourhoods $U_{0_{k\pm}}(\rho)$ are coloured green 
and the neighbourhoods $U_{\infty_{k\pm}}(\rho)$ are coloured blue 
in Figure \ref{ExpSin-z}.

%%%%%%%%%%%%%%%%%%%%%%%%%%%%%%%%%%%%%
\begin{figure}[htbp]
\begin{center}
\includegraphics[width=0.65\textwidth]{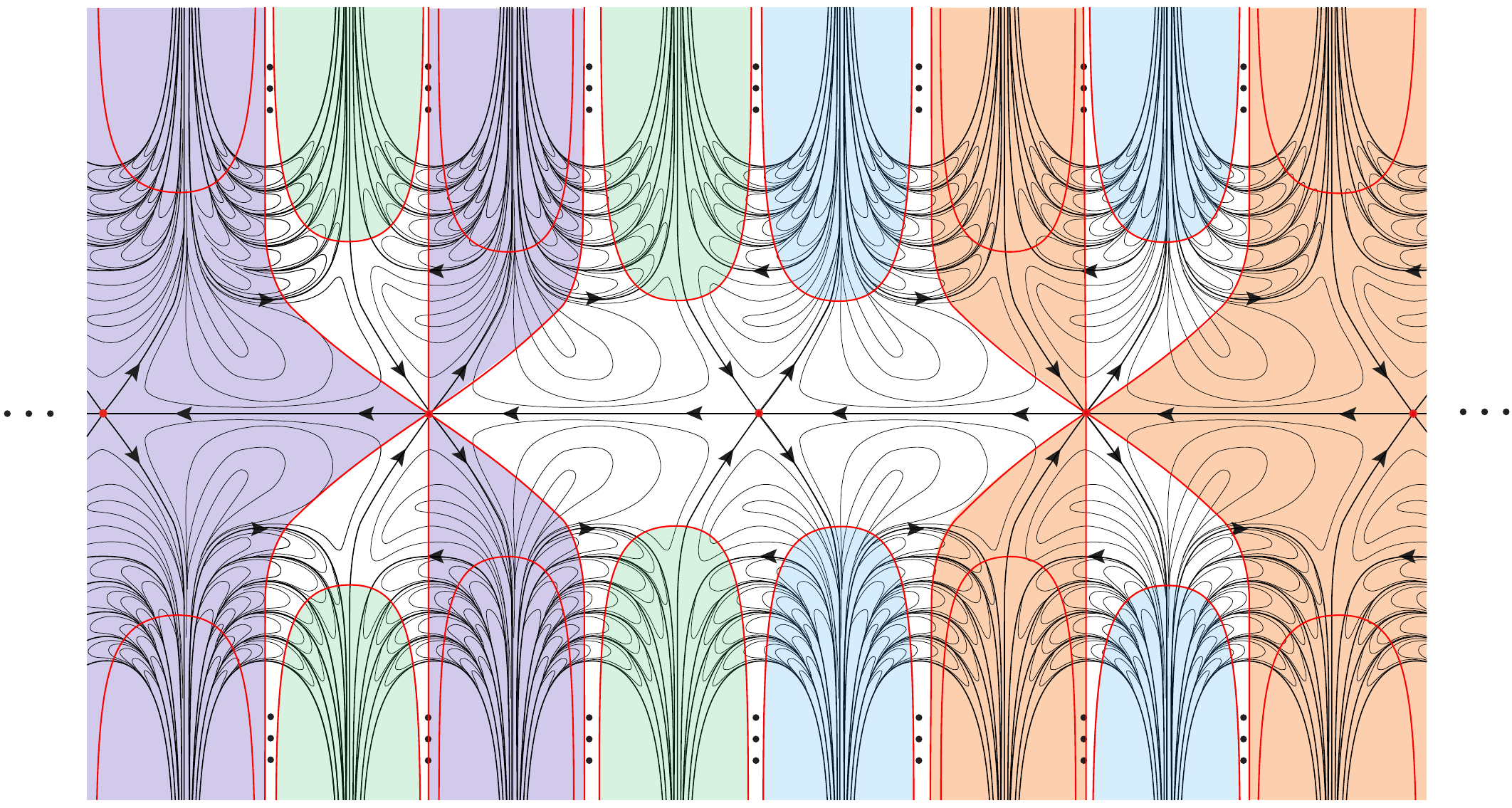}
\caption{
%ExpSin-z-2.pdf. 
Example \ref{Ejemplo-ExpSin-z},
function $\Psi_X(z)= \e^{\sin(z)-z}$ and 
phase portrait of the corresponding vector field 
$X(z) = \frac{\e^{\sin(z)-z}}{\cos(z)-1} \del{}{z}$.
The neighbourhoods 
$U_{\infty,-}(\rho)$ and $U_{0,+}(\rho)$
of the nonseparate singularities are coloured purple and orange, 
respectively.
}
\label{ExpSin-z}
\end{center}
\end{figure}
%%%%%%%%%%%%%%%%%%%%%%%%%%%%%%%%%%%%%

\noindent 
Since these neighbourhoods are mutually disjoint, 
the singularities are separate, 
so by Theorem \ref{teo-logaritmicas-separada}, 
each $U_{a_{k\pm}}$ is logarithmic. 

\noindent 
However, in this example there are two more singularities of $\Psi_X^{-1}$:

\noindent
$\bigcdot$ 
The asymptotic value $\infty$, 
arising from the asymptotic path $\alpha_{-}( {\tt t} )$, 
having image $\RR^-$,
gives rise to a direct transcendental 
singularity of $\Psi^{-1}_X$, say 
$U_{\infty,-}$.  
The corresponding neighbourhoods 
$U_{\infty,-}(\rho)$ contain the regions

\centerline{
$\{ z\in\CC_z\ \vert\ -\frac{\pi}{2}
<\arg(\frac{1}{z})<\frac{\pi}{2},
\ \Re{z}<R(\rho) \}$,
}

\noindent
for suitable $R(\rho)$. 
These neighbourhoods are coloured purple in Figure 
\ref{ExpSin-z}.

\noindent 
$\bigcdot$ 
Similarly, the asymptotic value $0$ arising from the 
asymptotic path $\alpha_{+}(t)$, having image $\RR^+$,  
gives rise to a direct transcendental 
singularity of $\Psi^{-1}_X$, say 
$U_{0,+}$.
The corresponding neighbourhoods 
$U_{0,+}(\rho)$ contain the regions

\centerline{
$\{ z\in\CC_z\ \vert\ -\frac{\pi}{2}<\arg(\frac{1}{z})
<\frac{\pi}{2},
\ \Re{z}>R(\rho) \}$,
}

\noindent
for appropriate $R(\rho)$.
These neighbourhoods are coloured orange in Figure \ref{ExpSin-z}.

\noindent
The above implies that, 
for any given $\rho>0$, each neighbourhood 
$U_{\infty,-}(\rho)$ and $U_{0,+}(\rho)$
contains 
an infinite number of neighbourhoods
$U_{a_{k\pm}}(\rho)$;
thus are nonseparate. 

\noindent
By Theorem \ref{teo-logaritmicas-separada},
$U_{\infty,-}$ and $U_{0,+}$, are
direct nonlogarithmic singularities.
\end{example}

%%%%%%%%%%%%%%%%%%%%%%%%

\begin{example}[Indirect transcendental singularity of $\Psi^{-1}_X$]
\label{Ejemplo-Sinc}
Let 

\centerline{
$\Psi_X (z) = \sin(z)/z$.
}

\noindent
The associated vector field is 

\centerline{
$X(z)=\frac{z^2}{z \cos (z)-\sin (z)}\del{}{z}$.
}

\noindent
The critical points of $\Psi_X$ are the unbounded set $\{ z\in\CC_z\ \vert\ z\cos(z)-\sin(z)=0 \}$, 
with critical values lying on the real axis and converging to $0$ as the critical points 
approach $\pm\infty$. 

\noindent
The asymptotic values of $\Psi_X$ are $0$ and $\infty$. 

\noindent
Since $\infty$ is an isolated 
asymptotic value, the singularities of $\Psi^{-1}_X$ over $\infty$ are logarithmic. 
In fact, 
there are two, say $U_{\infty\pm}$, arising from 
the asymptotic paths $\alpha_{\infty\pm}$ having 
images $i \RR^+$ and $i \RR^-$.
The corresponding (disjoint) neighbourhoods are

\centerline{
$U_{\infty\pm}(\rho)=\{z\in\CC_{z}\ \vert\ \pm\Im{z} > R(\rho) \}$, 
}

\noindent
for appropriate $R(\rho)>0$.

\noindent
The neighbourhoods $U_{\infty\pm}(\rho)$
are elliptic tracts.

\noindent 
On the other hand, since $\Psi_X$ assumes the value $0$ infinitely often along the 
real axis, the transcendental singularities of $\Psi^{-1}_X$ over $0$ are indirect.
In fact, there are two: $U_{0\pm}$ arising from
the asymptotic paths $\alpha_{0\pm}( {\tt t} )$ having images $\RR^+$ and $\RR^-$.
\end{example}

\begin{remark}[The topology of 
the vector field $\Re{X}$
does not determine the nature of 
the ideal points]
\label{remark-topologicamente-equivalente}
The previous example, shows that the vector fields 

\centerline{
$X_{1}(z)=\frac{z^2}{z \cos (z)-\sin (z)}\del{}{z}$
\  and  \ 
$X_{2}(z)=\sec(z)\del{}{z}$}

\noindent 
have the same topological phase portraits, 
see Figure \ref{album-afin-AAP}.b
and 
\cite{AlvarezMucino2} \S11 for accurate definitions. 
From the point of view of the
singularities of $\Psi^{-1}_X$, 
they have important differences:
the vector field 
$X_{1}$ has an indirect transcendental singularity, but $X_{2}$ does not.
Furthermore,
$\Psi_{X_1}$ has 4 asymptotic values 
$\{ 0,0,\infty,\infty \}$, 
but $\Psi_{X_2}$ only two
$\{\infty,\infty\}$.
\end{remark}

%%%%%%%%%%%%%%%

\begin{example}[Direct nonlogarithmic singularity without critical points]
\label{Ejemplo-IntExpExp}
Let 

\centerline{
$\Psi_X (z) = 
{\displaystyle \int_{0}^{z} } \e^{-\e^\zeta} d\zeta$,}

\noindent 
which is studied in
\cite{Herring}, \cite{Sixsmith}. 
The associated vector field is 

\centerline{
$X(z)=\e^{\e^{z}} \del{}{z}$.
}

\noindent
It is clear that the critical point set of $\Psi_X$ is empty. 
Let 

\centerline{
$a_0=\lim\limits_{\RR^{+}
\ni {\tt t}\to\infty}\Psi_{X}( {\tt t} )
=-
{\displaystyle \int_{-1}^{\infty } }
\dfrac{\e^{-t}}{t} \, dt \approx 0.219384$.}

\noindent 
There are an infinite number of finite asymptotic values of $\Psi_X$ given by

\centerline{
$\{a_k \doteq a_0 + i 2k\pi 
\ \vert \ k \in\ZZ 
\} \subset\CC_t$, }

\noindent
with asymptotic paths 

\centerline{
$\{ \alpha_{k} ({\tt t})= {\tt t} + i 2k\pi 
\ \vert \ 
k \in\ZZ \}$,  
\ \ \
for ${\tt t} \geq 0$,
}
\noindent
according to \cite{Herring} p.~271.

\noindent  
Since the finite asymptotic values are isolated, 
the corresponding transcendental singularities of $\Psi^{-1}_X$ are logarithmic
and their neighbourhoods $U_{a_k}(\rho)$ are hyperbolic tracts over $a_k$.

\noindent 
On the other hand, the asymptotic paths 

\centerline{
$\{ \beta_{k} ({\tt t} )= {\tt t} + i (2k+1)\pi 
\ \vert \ 
k\in\ZZ \}$, 
\ \ \
for ${\tt t} \geq 0$ 
}

\noindent
have the asymptotic value $\infty\in\CW_t$,
in accordance with \cite{Herring}, statement (8).
Note that $\infty$ is a nonisolated asymptotic value. 
The asymptotic paths 
$\{\beta_k\}$ correspond to 
neighbourhoods $U_{\infty,k}(\rho)$ 
that, 
for $\rho>0$ sufficiently small, are
disjoint from the neighbourhoods of other singularities 
of $\Psi^{-1}_X$; thus these transcendental singularities are separate.
Hence, by Theorem \ref{teo-logaritmicas-separada}, 
they are also logarithmic singularities of $\Psi^{-1}_X$.

\noindent 
From statements (9) and (10) of \cite{Herring}, $\infty\in\CW_t$ is an asymptotic
value for a\-symptotic paths arriving to $\infty\in\CW_t$ in an angular sector of angle
$2\pi$ that avoids the positive real line. 
We shall denote by $U_{\infty, -}$ the corresponding singularity.
For $\rho>0$, each neighbourhood $U_{\infty,-}(\rho)$ 
contains an infinite number of neighbourhoods $U_{a_k}(\rho)$ and $U_{\infty,k}(\rho)$, 
hence the singularity $U_{\infty,-}$ is nonseparate, thus direct nonlogarithmic.
See Figure \ref{figExpExp}.

%%%%%%%%%%%%%%%%%%%%%%%%%%%%%%%%%%%%%%
\begin{figure}[htbp]
\begin{center}
\includegraphics[width=0.60\textwidth]{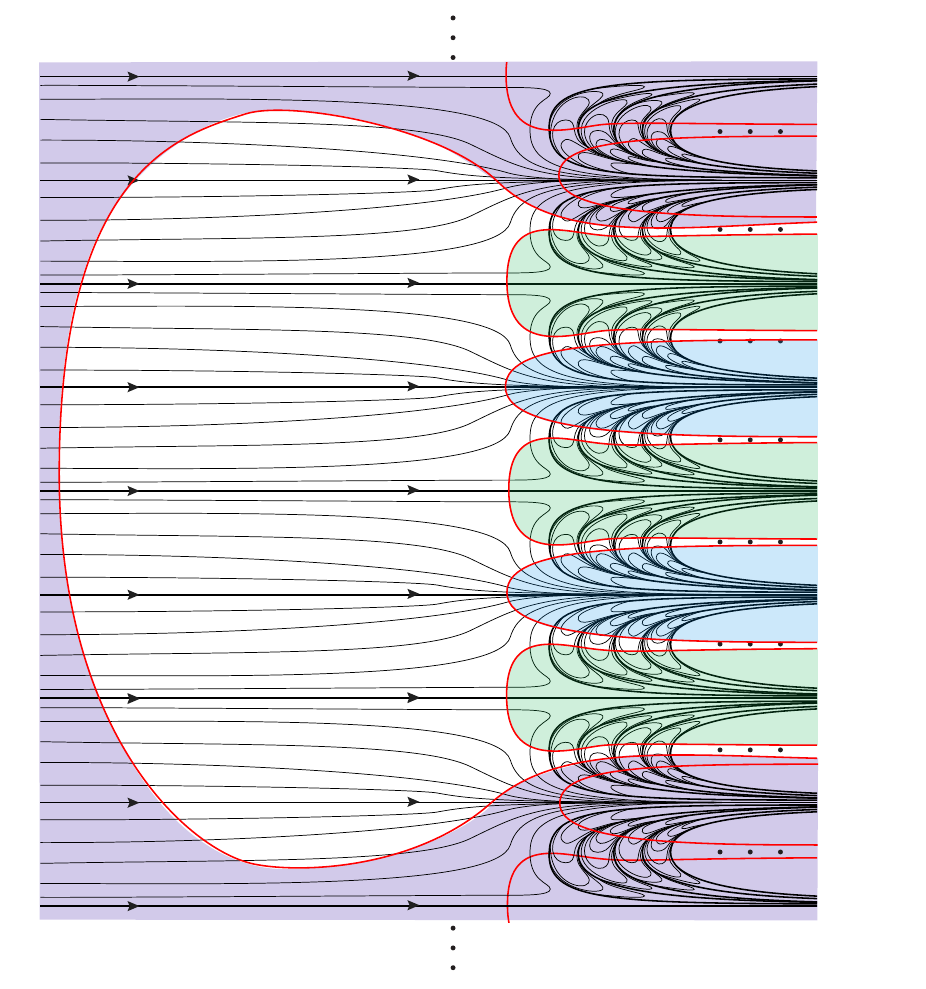}
\caption{
%ExpExp-1.pdf.
Example \ref{Ejemplo-IntExpExp},
function $\Psi_X(z)=\int_{0}^{z} \e^{-\e^\zeta} d\zeta$ and 
phase portrait of the corresponding vector field $X(z)=\e^{\e^z} \del{}{z}$.
The neighbourhood $U_{\infty, -}(\rho)$
of the nonseparate singularity is coloured purple. 
}
\label{figExpExp}
\end{center}
\end{figure}
%%%%%%%%%%%%%%%%%%%%%%%%%%%%%%%%%%%%%%
\end{example}

%%%%%%%%%%%%%%%%%%%%%%%%%%%%%%%%%%%%%%%%%%%%%
\begin{example}[Direct nonlogarithmic singularity of $\Psi^{-1}_X$,
%over an asymptotic value that is 
with an accumulation of critical values]
\label{Ejemplo-ExpSinExp}
Let 

\centerline{
$\Psi_X (z) = \e^z \sin(\e^z)$.
}

\noindent
The associated vector field is

\centerline{
$X(z)=\frac{1}{\e^z \sin \left(\e^z\right)+\e^{2 z} \cos \left(\e^z\right)} \del{}{z}$.
}

\noindent
The critical points of $\Psi_X$ are the unbounded set

\centerline{
$\left\{ z\in\CC_z\ \vert\ 
\e^z \big( \sin \left(\e^z\right)+\e^z \cos \left(\e^z\right) \big) = 0 \right\}$,
}

\noindent
which lie along the real lines of height $i k \pi$, $k\in\ZZ$ and whose real part is approximately 
given by $\{ \log((2j+1)\frac{\pi}{2}) \ \vert\ j\in\NN \}$. 
Thus in particular, 
the critical points lie to the right of $\Re{z}=\log(3\pi/2)\approx 1.55019$.
The corresponding critical values lie on the real axis and converge to $-\infty$ as the critical
points approach $\infty$.

\noindent
The asymptotic values of $\Psi_X$ are $0, \infty\in\CW_t$.

\noindent
Since $a=0$ is an isolated asymptotic value, there is a (direct) logarithmic singularity 
$U_0$ over it.
Its neighbourhoods $U_0 (\rho)$ are contained in half planes 

\centerline{
$U_0 (\rho) \subset \{z\in\CC_z \ \vert\ \Re{z}<-R(\rho) \}$,
}

\noindent
for appropriate $R(\rho)>0$.
The neighbourhoods $U_0 (\rho)$ are hyperbolic tracts over $0$ 
and are coloured green in Figure \ref{ExpSinExp}.
%%%%%%%%%%%%%%%%%%%%%%%%%%%%%%%%%%%%%%
\begin{figure}[htbp]
\begin{center}
\includegraphics[width=0.65\textwidth]{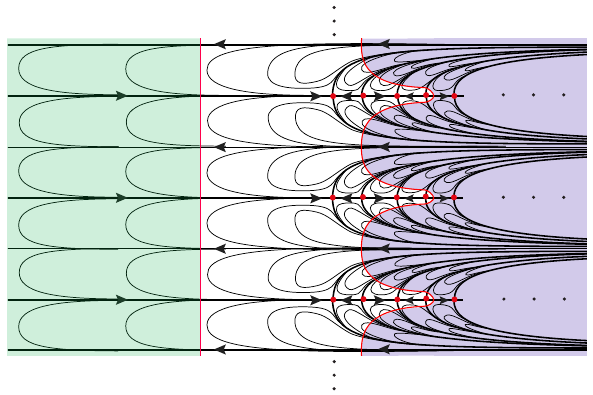}
\caption{
%ExpSinExp-2.pdf.
Example \ref{Ejemplo-ExpSinExp},
function $\Psi_X(z)= \e^z \sin(\e^z)$ and 
phase portrait of the corresponding vector field
$X(z)= \big(\e^z \sin \left(\e^z\right)+\e^{2 z} \cos \left(\e^z\right) \big)^{-1} \del{}{z}$.
The neighbourhood $U_{\infty}(\rho)$
of the nonseparate singularity is coloured purple. 
}
\label{ExpSinExp}
\end{center}
\end{figure}
%%%%%%%%%%%%%%%%%%%%%%%%%%%%%%%%%%%%%

\noindent
On the other hand, 
since $\Psi_X$ is entire, $\infty$ is an omitted value, 
and hence the singularity 
$U_{\infty}$ associated to the
asymptotic value $\infty$ is direct.

\noindent
Note that any neighbourhood $U_{\infty}(\rho)$, coloured 
purple in Figure \ref{ExpSinExp}, 
of this direct singularity contains 
a half plane $\{ \Re{z}>R(\rho) \}$, for
appropriate $R(\rho)$, and thus an infinite
number of critical points (algebraic singularities of $\Psi^{-1}_X$).
Therefore 
$U_{\infty}$ is nonseparate, {\it i.e.} it is a direct nonlogarithmic singularity over $\infty$.

\noindent
It is to be noted that this $\Psi_X$ only has two transcendental singularities of $\Psi^{-1}_X$:
a logarithmic singularity over $0$ and a direct nonlogarithmic singularity over $\infty$.
\end{example}

\begin{example}\label{ejemplo-sin}
We consider the vector field

\centerline{
$X(z)= i \sin(z)\del{}{z}.$
}

\noindent 
In Figure \ref{album-afin-AAP}.d.
is a sketch of  
the phase portrait of $\Re{X}$.
Since $\mathcal{S}_{R}=\{k\pi \ \vert\ k\in\ZZ \}$,
clearly $\Psi_X$ is multivalued
additively automorphic.
Let $\gamma_k({\tt t})=k\pi+{\tt t}\pi$ for ${\tt t}\in[0,1]$,
and $\Gamma=\{\gamma_k\}_{k\in\ZZ\backslash\{0\}}$ so
$\overline{\Gamma}=[-\infty,0]\cup [\pi,\infty]\subset\CW_z$
is a closed arc of a circle containing $\infty$.
It follows that a fundamental region is

\centerline{
$\Lambda = 
( \CW_z\backslash\overline{\Gamma} )
\bigcup\limits_{k\in\ZZ\backslash\{0\}} \gamma_{k+}$,
}

\noindent  
as in \S\ref{construccion-region-fundamental}.
The restriction of $\Psi_X$ to $\Lambda$, for 
$z_{\tt o}=\pi/2$, 

\centerline{$
\Psi_{X,\,\Lambda}(z)
=i
{\displaystyle \int_{z_{\tt o}}^z} \csc(\zeta) d\zeta
=i\Big(\log\big(\sin(z/2)\big) - 
\log\big(\cos(z/2)\big)\Big)
: \Lambda \longrightarrow \CW_t
$}

\noindent 
is single--valued. 
The fundamental region is

\centerline{$\Omega= \left\{\big(z,\Psi_X(z)\big) \ \vert\ 
z\in\Lambda \right\} \subset \R_X$,}

\noindent
see Figure \ref{cosRaiz-superficie}.a  
for a sketch of $\Omega$. 

%%%%%%%%%%%%%%%%%%%%%%%%%%%%%%%%%%%%%%
\begin{figure}[h]
\begin{center}
\includegraphics[width=0.75\textwidth]{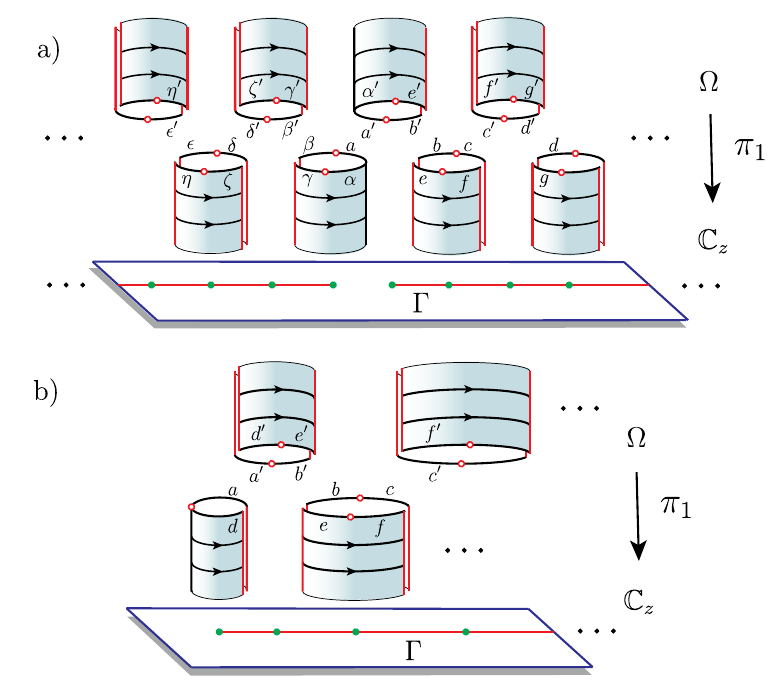}
\caption{
%cosRaiz-superficie.pdf 
A sketch (using surgery) of the fundamental regions
$\Omega \subset \R_X$ of the vector fields: 
(a) $i \sin(z)\del{}{z}$ and
(b) $i \cos(\sqrt{z}) \del{}{z}$.
In both cases, the zeros are simple with
imaginary linear parts, hence they are isochronous
centers of $\Re{X}$ and determine half 
cylinders in the metric $(\CW_z, g_X)$ 
\emph{i.e.} of height $(0, \infty)$ or $(-\infty, 0)$.
Moreover, the upper and lower ends
of the cylinders, which are not identified
(coloured green),correspond to the zeros of the vector fields
(green points in $\CC_z$). 
The path $\Gamma$ (coloured red) is a
cut between the zeros of $X$, 
obtaining  flow boxes. 
The letters indicate the corresponding
identifications, that describe the connected
regions $\Omega$.
}
\label{cosRaiz-superficie}
\end{center}
\end{figure}
%%%%%%%%%%%%%%%%%%%%%%%%%%%%%%%%%%%%%%%%%%%%%%%%%%%

\noindent
One can observe 
a sequence of simple zeros accumulating
at $\infty\in\CW_z$:

\noindent 
$\bigcdot$ 
each simple zero of $X$, 
at $q_k=k\pi$, $k\in\ZZ$,
has asymptotic value $\infty$, 
and 

\noindent 
$\bigcdot$ 
the essential singularity at $\infty \in \CW_z$, 
has two finite asymptotic values 
$a_{\pm}=\mp \pi /2 $;
arising from asymptotics paths, 
$\alpha_\pm({\tt t})\subset\Lambda$, 
that start at $z_{\tt o}$ and 
arrive at $\infty\in\CW_z$ inside
of the upper or lower half planes $\HH_{+}$ or 
$\HH_{-}$ respectively.

\noindent
By using Diagram \ref{diagrama-dominio-fundamental}
and 
Definition \ref{definicion-singularidades-en-Lambda},
the singularities of 
$\Psi_{X,\,\Lambda}^{-1}$
are:

\noindent
$\bigcdot$ the  
$\star$--transcendental 
singularities 
$\{U_{\infty,k}\}_{k\in\ZZ}$ corresponding to the 
zeros $\{q_k\}$ of $X$, and

\noindent
$\bigcdot$ the two 
essential transcendental  
singularities $U_{a_\pm}$.

\noindent
Since the finite asymptotic values 
$a_\pm$ are isolated, 
then by Theorem \ref{teo-logaritmicas-separada},
the essential transcendental singularities $U_{a_\pm}$ 
are logarithmic transcendental singularities of
$\Psi_{X,\,\Lambda} ^{-1}$.
Moreover, since the asymptotic values $a_\pm$ 
are finite, 
the neighbourhoods $U_{a_\pm}(\rho)$ are 
hyperbolic tracts,
which
can be clearly observed on the 
phase portrait.

From the perspective of the universal cover
$\mathfrak{M}$ of
$\CW_z\backslash\{0,\infty\}$, 
Corollary \ref{cubierta-universal-singularidades}.3
applies.

\end{example}

%%%%%%%%%%%%%%%%%%%%%%%%%%%%%%%%%%%%%%%%%%%%%%%%%%%%%%%%%%%%%%%%
\subsubsection{A family of vector fields with only one tract}
\label{un-solo-tract}
Let us now consider the family 

\centerline{
$\left\{ 
X(z)= \lambda z^{\ell} \cos^{r} (\sqrt{z}) \del{}{z} 
\ \big\vert \ 
r\in \ZZ^*, \ \hbox{ even } \ell \in \ZZ, 
\, \lambda \in \CC^* 
\right\}$
}

\noindent 
of singular complex analytic vector fields on 
$\CW_z$ with
an essential singularity at $\infty$.
In particular,
for $\ell \geq 0$ $X$ is holomorphic at $0$.
Using the complex quotient

\centerline{
$ \pi: \CW_z \longrightarrow 
\big( \CW_z / \pm id \big) 
= \CW_\mathfrak{z}, 
\ \ \
z \longmapsto[\pm z],$} 

\noindent 
here $[\ ]$ denotes the equivalence class,
it follows that $\cos(\mathfrak{z})$ is well defined.
The function
$\cos(\sqrt{z})$ is entire and nonvanishing at $0$. 

\begin{example}
\label{cos-raiz}
Case $\ell=0, \ r=1$.
\emph{A transcendental
singularity of $\Psi_X^{-1}$
with an accumulation of simple zeros.}
The vector field

\centerline{
$
X(z)= i \cos (\sqrt{z}) \del{}{z}
$} 

\noindent 
has a unidirectional sequence of simple
zeros (isochronous centers)
$\mathcal{Z}_{R}=\{ (k\pi + \pi/2 )^2 \ \vert \ k \in \NN\}
\subset \RR^+$
that accumulates to
the essential singularity at $\infty$.
The corresponding

\centerline{
$
\Psi_X(z) = -i 
{\displaystyle \int}_{\hspace{-4.5pt}0}^z \sec(\sqrt{\zeta}) d\zeta
=2 \left[
2 i \mathfrak{C} 
+ \text{Li}_2\left(-i e^{i \sqrt{z}}\right) 
- \text{Li}_2\left(i e^{i \sqrt{z}}\right)
- 2 \sqrt{z} \tan^{-1} \left(e^{i \sqrt{z}} \right)
\right],
$}

\noindent 
is a multivalued additively automorphic 
singular complex analytic function, 
where
$\mathfrak{C} \doteq \sum_{k=0}^{\infty } \frac{(-1)^k}{(2 k+1)^2} 
\simeq 0.91597$ 
is Catalan's constant and
$\text{Li}_2(z)
=-\int_0^z \frac{\log (1-u)}{u} du$
is the dilogarithm function,
see for instance \cite{Lewin}.
The phase space of $\Re{X}$ is sketched in
Figure \ref{album-afin-AAP}.e. 
Note that $\infty\in\CW_z$ is an accumulation (from the right)
of the simple zeros $\mathcal{Z}_R$ of $X$.
We construct a fundamental 
domain $\Lambda$ for $\Psi_X$, Subsection
\ref{construccion-region-fundamental}.
Let $\{ \gamma_k\}$ be real segments between two 
consecutive zeros of $X$, 
$\overline{\Gamma}$ is the closure of these segments, 
a fundamental region  

\centerline{
$\Lambda =
\big( \CW_z \backslash  \overline{\Gamma}  \big)
\cup \gamma_{k+}, 
\ \ \
\hbox{ where }
\overline{\Gamma} = [\pi/2, + \infty].$
}

\noindent 
The periods of the trajectories of 
$\Re{ i\cos( \sqrt{z}) \del{}{z}}$ are

\centerline{
$T_k= 2 \pi (r_k) =
%2\pi ((-1)^{k} i \pi  (2 k+1) )
%= 
(-1)^{k} i \pi^2  (4k+2)$
}

\noindent
where $r_k = 
Res(\omega_X, q_k) = 
\frac{1}{2\pi i} \int_\vartheta \omega_X$,
and as usual $\vartheta$ encloses the respective zero.  
Each zero $q_k$  
of $X$ determines a basin of periodic trajectories
of $\Re{X}$, 
say $\mathscr{C}_k$, 
which provided with the metric $g_X$
is isometric to a semi infinite flat cylinder
of perimeter $T_k$. 
Thus, the perimeters of the cylinders
tend towards $\infty$, as the zeros approach
the essential singularity $\infty$.
See Figure \ref{cosRaiz-superficie}.b.

\noindent
On the other hand, since $X(z)$ is entire, then 
$\Psi_X$ does not have any finite critical values.
Moreover, since $\Psi_{X,\,\Lambda}$ is single--valued 
on $\Lambda$, and there 
is only one homotopy class of paths approaching $\infty\in\CW_z$,
there is one finite asymptotic value
for $\Psi_{X,\,\Lambda}$ arising from the asymptotic path $\alpha({\tt t})=-{\tt t}$ for ${\tt t}\in\RR^+$,
that is

\centerline{$
a=
\lim\limits_{{\tt t}\to\infty} 
\Psi_{X,\,\Lambda}\big( \alpha({\tt t}) \big)
= 4 i \mathfrak{C} \simeq i 3.66388 \in\CC_t.%
$}

\noindent 
Thus the singular values are $\{ a,\infty\}$.
Once again, by Theorem \ref{teo-logaritmicas-separada},
the singularity $U_a$ associated to the asymptotic value $a$ is logarithmic.
Since the asymptotic value $a\in\CC_t$ is finite, 
$U_a(\rho)$ are hyperbolic tracts,
coloured green in Figure \ref{album-afin-AAP}.e.

\noindent
Thus, 
by using Diagram \ref{diagrama-dominio-fundamental}
and 
Definition \ref{definicion-singularidades-en-Lambda}, 
all the singularities of 
$\Psi_{X,\,\Lambda}^{-1}$ are:

\noindent
$\bigcdot$ the $\star$--transcendental 
singularities $\{ U_{\infty,k} \}_{k\in\NN} $ corresponding to the
zeros $q_k$ of $X$, and

\noindent
$\bigcdot$ the logarithmic singularity
$U_a$ 
over the finite asymptotic value
$a\in\CC_t$ as above.

From the perspective of the universal cover
$\mathfrak{M}$ of
$\CW_z\backslash\{0,\infty\}$, 
Corollary \ref{cubierta-universal-singularidades}.1--3
applies.
\end{example}

\begin{example}
\label{una-sola-singularidad-de-la-inversa}
Case $\ell=0, \ r=-1$.
\emph{A direct nonlogarithmic 
singularity of $\Psi_X^{-1}$
over $\infty$ with an accumulation of finite 
critical values.}
The vector field

\centerline{$
X(z)= \frac{1}{\cos (\sqrt{z})} \del{}{z}
$}

\noindent 
has an unidirectional sequence of poles 
$\mathcal{P}=\{ p_k\doteq (k \pi + \pi/2 )^2 \ \vert  \ k \in \NN\cup\{0\} \}
\subset \RR^+$
that accumulates to
the essential singularity al $\infty$.
The singular set of $X$ is 
$\mathcal{S}_{X}=
\overline{\mathcal{P}}=\mathcal{P}\cup\{\infty\}$.
The single--valued additively automorphic 
entire function

\centerline{
$
\Psi_X(z)= \int_0^z 
\cos(\sqrt{\zeta})
d\zeta
=
2 \big( \sqrt{z} \sin \left(\sqrt{z}\right)+ \cos \left(\sqrt{z}\right) -1 \big)
$,
}

\noindent 
has critical points at $\mathcal{P}$
with critical values 
$\{ \widetilde{p}_k\doteq
(-1)^{k} (2 k + 1)\pi -2 \ \vert  \ k \in \NN\cup\{0\} \}
\subset\CC_t$,
an alternating sequence centered about $0\in\CC_t$ with 
accumulation point $\infty\in\CW_t$.
A model for the Riemann surface $\R_X$ can be constructed by surgery as follows, 
see Figure \ref{raiz-coseno-trayectorias}:

%%%%%%%%%%%%%%%%%%%%%%%%%%%%%%%%%%%%%%
\begin{figure}[h]
\begin{center}
\includegraphics[width=0.85\textwidth]{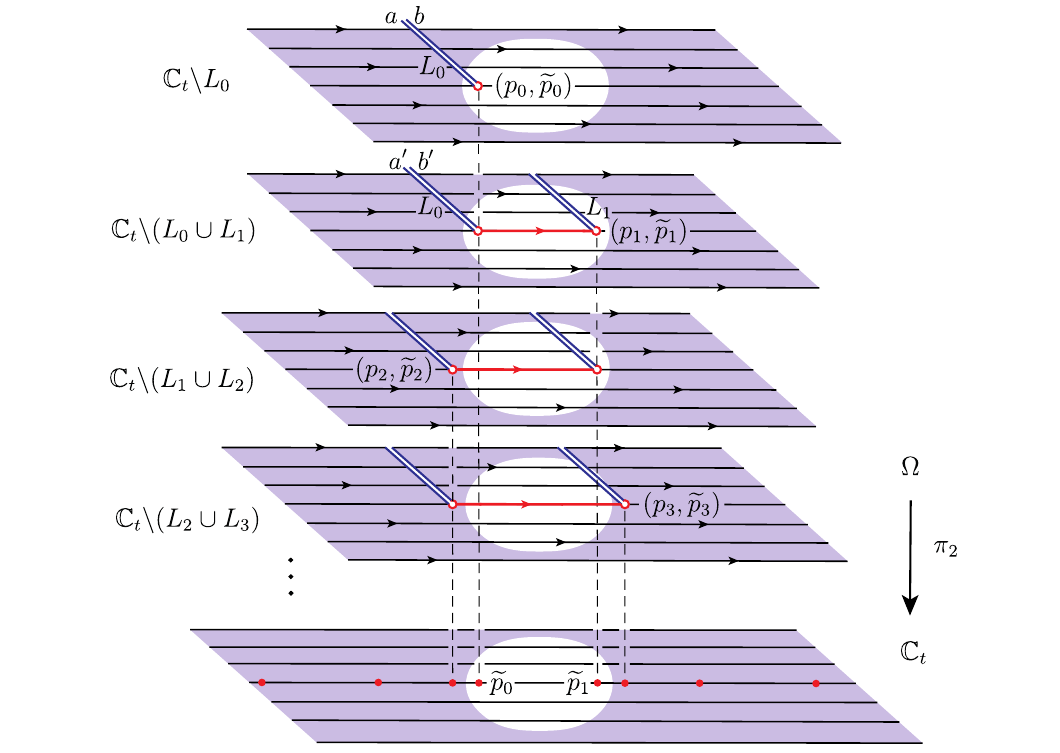}
\caption{
%raiz-coseno-trayectorias.pdf
A sketch (using surgery) of the fundamental region
$\Omega \subset \R_X$ of the vector field 
$X(z)= \big(1/\cos (\sqrt{z})\big) \del{}{z}$.
The segments of $\pi_1^{-1}(\Gamma)$ are in red in $\Omega$.
Thus, there are infinite copies of $\CC_t$ in $\Omega$ with
auxiliary cross cuts $L_k $,
whose boundaries
are identified to produce branch points $(p_k, \widetilde{p}_k )$ of index two. The $L_k$
are orthogonal to the trajectories of $\Re{X}$. 
The critical values $\widetilde{p}_k$ accumulate to $\infty \in \CW_t$.
A neighbourhood $D(\infty, \rho) \subset \CC_t$ 
is coloured purple. 
The preimage $\pi_2^{-1} \big( D(\infty, \rho) \big)
\subset \Omega$ is one connected component, coloured purple,
and contains an infinite number of branch points
$(p_k, \widetilde{p}_k)$ corresponding to poles of $X$, 
represented as red dots.
}
\label{raiz-coseno-trayectorias}
\end{center}
\end{figure}  
%%%%%%%%%%%%%%%%%%%%%%%%%%%%%%%%%%%%%%

\noindent
$\bigcdot$ 
As a first step
consider copies of $\CC_t$, say $\CC_t\backslash L_0$
and $\CC_t\backslash (L_0 \cup L_1)$,
where $L_0$ is a branch cut starting at the branch point 
$(p_0,\widetilde{p}_0)$ 
and $L_1$ is a branch cut starting at the branch point 
$(p_1,\widetilde{p}_1)$, both 
of ramification index 2.
As usual, the boundaries of the $L_0$'s 
are identified:
side $a$  with  side $b'$ and
side $b$  with  side $a'$.

\noindent
$\bigcdot$ 
Secondly, for each $k\in\NN$ consider $\CC_t\backslash(L_k\cup L_{k+1})$, 
here $L_{k}$ is a branch cut starting at the branch point
$(p_k,\widetilde{p}_k)$ of ramification index 2,
and $L_{k+1}$ is a branch cut starting at the branch point
$(p_{k+1},\widetilde{p}_{k+1})$ of ramification index 2.
The above copies of 
$\CC_t\backslash(L_{k}\cup L_{k+1})$ 
are glued along the corresponding branch cuts $L_k$, $k\in\NN\cup\{0\} $.

\noindent
Moreover, note that 
$\infty\in\CW_t$ is an asymptotic value with asymptotic path $\alpha_{\infty}({\tt t})$
in the angular sector $\{0<\arg{z}<2\pi\}$.
\begin{lemma} 
Let $
\Psi_X(z)= \int_0^z \cos(\sqrt{\zeta}) d\zeta$, 
the singularity $U_\infty$ of $\Psi_X^{-1}$ is 

\begin{enumerate}[label=\arabic*),leftmargin=*]
\item
non separate,

\item
direct and non logarithmic. 
\end{enumerate}
\end{lemma}
\begin{proof}
To prove (1), consider $\overline{D(0,R)}\subset\CW_t$, for $R>\abs{\widetilde{p}_1}$,
note that the complement is $D(\infty,1/R)$.
By considering Diagram \ref{diagramaRX},
it follows that 
$\pi_2^{-1}\big( D(\infty,1/R)\big)\subset\R_X$ 
contains an infinite number of 
branch points for $R>\abs{\widetilde{p}_1}$.
From this it immediately follows that 
$U_\infty(1/R)=\Psi_X^{-1}\big( D(\infty,1/R)\big)$ 
contains an infinite number of critical points $\{p_k\}$.
Thus $U_\infty$ is nonlogarithmic.

Finally, since $\Psi_X$ is entire, $\infty\notin U_\infty(\rho)$ for $\rho>0$, 
so $U_\infty$ is also direct.
\end{proof}

\noindent 
The singularities of $\Psi_{X}^{-1}$ are:

\noindent
$\bigcdot$ the algebraic
singularities corresponding to the
poles $p_k$ of $X$, and

\noindent
$\bigcdot$ 
exactly one
direct and non logarithmic singularity
$U_\infty$.

\noindent
Figure \ref{album-afin-AAP}.c illustrates 
the phase portrait of $\Re{X}$ at $\infty$.
Note that every neighbourhood $U_\infty (\rho)$
contains an
infinite number of poles of $X$.
Moreover, the phase space of $\Re{X}$ has at infinity a region 
which is a 
topological elliptic tract
(with an angular sector of angle $2\pi$),
however it is nonanalytically equivalent to 
the elliptic tract in 
Definition \ref{tract-eliptico-hiperbolico}.
\end{example}

\begin{example} 
\label{ejemplo-tan}
Consider the vector field 
  
\centerline{$X(z)= \tan(z)\del{}{z}.$}

\noindent  
A sketch of the phase portrait of $\Re{X}$ 
can be found in 
Figure \ref{album-afin-AAP}.f.
Once again the simple zeros of $X$ are 
$\mathcal{S}_{R}=\{ q_k \doteq k\pi \ \vert\ k\in\ZZ \}$,
whence $\Psi_X$ is multivalued.

\noindent
The poles of $X$ are 
$\mathcal{P}=\{ p_k \doteq\frac{\pi}{2} + k\pi \ \vert\ k\in\ZZ \}$, 
and since $\infty\in\CW_z$ is an accumulation point of zeros and poles, 
it follows that 
$\mathbb{E}=\{\infty\}$.

\noindent
We now choose a fundamental domain as in 
\S\ref{construccion-region-fundamental};
for this we note that because of Remark \ref{No-necesario-evitar-polos}.3
it is not necessary that $\Gamma$ avoid $\mathcal{P}$. 
Let 

\centerline{
$\gamma_0({\tt t}) = q_0 - {\tt t} \pi$, for  $t \in [0,1]$,
}

\centerline{
$\gamma_k({\tt t}) = q_k + \text{sign}(k)\, {\tt t} \pi$, for  $t \in [0,1]$, 
and $k\in\ZZ\backslash\{0\}$.
}

\noindent
Let $\Gamma_\varepsilon=\{ \gamma_k \}_{k\in\ZZ}$,
so
$\overline{\Gamma}\subset\CW_z$
is a simple path 
containing $\infty$.
It follows that a fundamental domain is
  
\centerline{
$\Lambda = 
( \CW_z\backslash\overline{\Gamma} )
\bigcup\limits_{k\in\ZZ} \gamma_{k+}$,
}  

\noindent  
as in \S\ref{construccion-region-fundamental}.
The restriction of $\Psi_X$ to $\Lambda$, for 
$z_{\tt o}=\pi/2$, 
  
\centerline{
$\Psi_{X,\,\Lambda}(z)
={\displaystyle  \int_{z_{\tt o}}^z }\cot(\zeta) d\zeta
=\log\big(\sin(z)\big)
: \Lambda \longrightarrow \CW_t$
}
  
\noindent 
is single--valued. 
The fundamental region is
  
\centerline{$\Omega= \left\{\big(z,\Psi_X(z)\big) \ \vert\ 
z\in\Lambda \right\} \subset \R_X.$}
  
\noindent
A surgery model for $\Lambda$ and $\Omega$, 
illustrating  Diagram \ref{diagrama-dominio-fundamental}, 
is shown in Figure \ref{tangente-proyecciones}.
Recalling that $k\in\ZZ$, the 
needed 
identifications are:

\centerline{
side $a_k$ is to be identified with side $b_{k+1}$,
side $d_k$ is to be identified with side $c_{k+1}$.}

\noindent
For simplicity of the drawing, the identifications are shown 
on two of the building blocks of $\Lambda$ 
and only on one of the building blocks of $\Omega$.

\noindent
In the same figure, on $\Lambda$ one can also observe (as red trajectories of $\Re{X}$) the segments $\{\gamma_k\}$ comprising $\Gamma$. 
The corresponding image on $\Omega$ is  
observed as the red trajectories of $\Re{X}$ that come from $\infty$ and land on the
branch points $(p_k,\widetilde{p}_k)$ that have ramification index 2.

\noindent 
Note that for the
sequence of simple zeros and simple poles accumulating
at $\infty\in\CW_z$,

\noindent 
$\bigcdot$ 
each simple zero $q_k$ of $X$, 
has asymptotic value $\infty$, 
  
\noindent
$\bigcdot$ 
each simple pole $p_k$ of $X$, 
has critical value 
$\widetilde{p}_k = i k\pi$;
that is
the critical values associated to the poles 
$\mathcal{P}$ are 
$\mathcal{CV}=\{ i k\pi\ \vert\ k\in\ZZ \}\subset\CC_t$,
and 
  
\noindent 
$\bigcdot$ 
the essential singularity at $\infty \in \CW_z$, 
has associated the asymptotic value $\infty$ 
with multiplicity two
arising from asymptotics paths, 
$\alpha_\pm({\tt t})\subset\Lambda$, 
that start at $z_{\tt o}$ and 
arrive at $\infty\in\CW_z$ inside
of the upper or lower half planes $\HH_{+}$ or 
$\HH_{-}$ respectively.

%%%%%%%%%%%%%%%%%%%%%%%%%%%%%%%%%%%%%%
\begin{figure}[h]
\begin{center}
\includegraphics[width=1.0\textwidth]{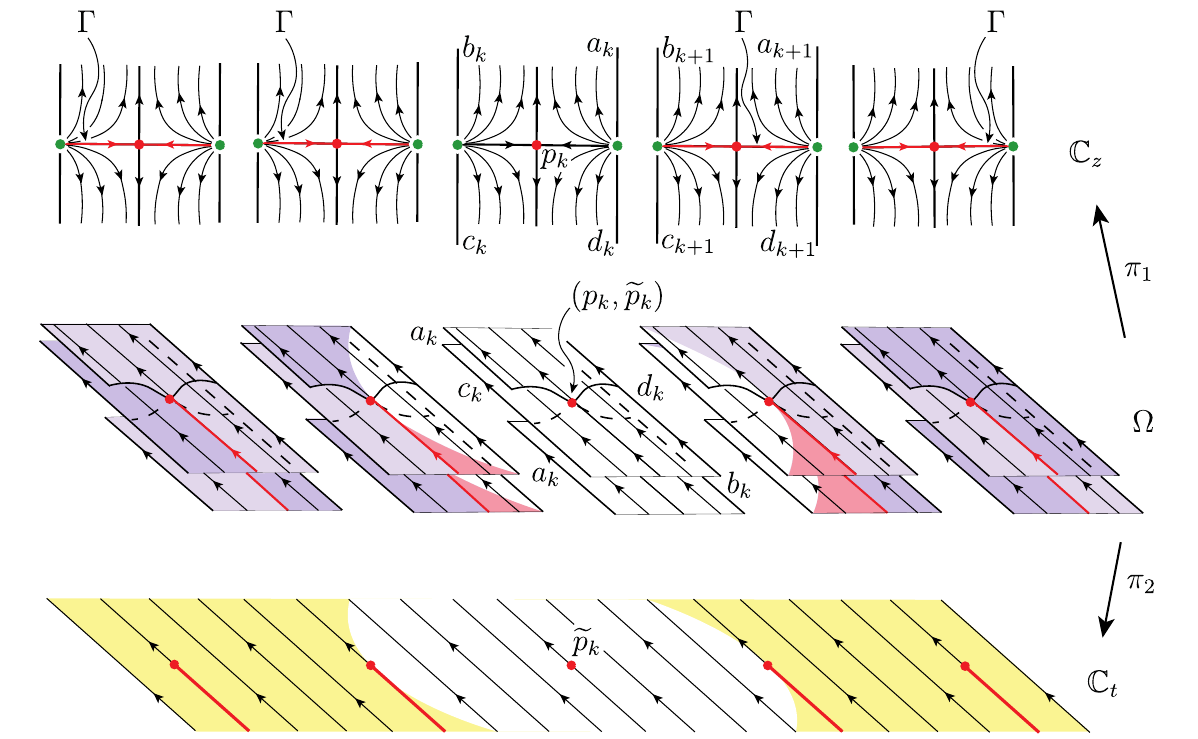}
\caption{
%tangente-proyecciones.pdf
A sketch (using surgery) of the fundamental region
$\Omega \subset \R_X$ of the vector field 
$X(z)= \tan(z) \del{}{z}$.
Using the 
$\pi$--periodicity of $X$ in $\CC_z$,
we recognize that each vertical band in $\CC_z$
has a pole of 
$X$ (a red point), 
determining a branch point of index two in $\Omega$.
A neighbourhood $D(\infty, \rho) \subset \CC_t$ is
coloured yellow.  
The inverse image $\pi_2^{-1}\big( D(\infty,\rho)\big)$ 
lifts to $\Omega$ with several connected components.
The segments of $\pi_1^{-1} (\Gamma) \subset \Omega$ 
are in red, recall that they are cuts, 
hence the colours
that describe the connected components of 
$\pi_2^{-1}\big( D(\infty, \rho) \big)$
change along them.
Our interest lies in the
two connected components 
that contain an infinite number of branch points,
these connected components are coloured purple and dark 
purple.
The branch points 
$(p_k,\widetilde{p}_k)$ corresponding
to poles of $X$ are represented as red dots on
$\Omega$.
Note that the  critical values $\widetilde{p}_k$ are
$ ik \pi$, $k \in \ZZ$.
The zeros of $X$ are represented as 
green dots on $\CC_z$,
the corresponding branch points $(q_k,\infty)$
are not illustrated in $\Omega$.
}
\label{tangente-proyecciones}
\end{center}
\end{figure}  
%%%%%%%%%%%%%%%%%%%%%%%%%%%%%%%%%%%%%
  
\noindent
By using Diagram \ref{diagrama-dominio-fundamental}
and Definition \ref{definicion-singularidades-en-Lambda},
the singularities of 
$\Psi_{X,\,\Lambda}^{-1}$
are:
  
\noindent
$\bigcdot$ the 
$\star$--transcendental 
singularities 
$\{U_{\infty,k}\}_{k\in\ZZ}$ corresponding to the 
zeros $\{q_k\}$ of $X$, 
  
\noindent
$\bigcdot$ the algebraic singularities 
$U_{i k\pi}$ 
corresponding to the poles $p_k$,
and
  
\noindent
$\bigcdot$ the two 
essential transcendental singularities 
$U_{\infty+}$ and $U_{\infty-}$ corresponding to asymptotic paths $\alpha_{\pm}$.
  
\noindent
Since the critical values (arising from the poles of $X$) accumulate at 
$\infty \in \CW_t$, the asymptotic values $\infty$ 
are not isolated.
  
\noindent
For the separateness properties of the singularities $U_{\infty\pm}$, consider
the fundamental region $\Omega\subset\R_X$ 
(for the choice of fundamental domain $\Lambda$ as above). 
Moreover, 
recall that $\Psi_{X,\,\Lambda}^{-1}=\pi_{1}|_{\Omega}\circ\pi_{2}^{-1}$,
as in Diagram \ref{diagrama-dominio-fundamental}, 
thus in Figure \ref{tangente-proyecciones} 
the red segments 
$\pi_1^{-1}(\Gamma)\subset\Omega$ are cuts, 
and thus boundaries of $\Omega\subset\R_X$.
It follows that the purple and dark purple 
connected components of $\pi_2^{-1}\big(D(\infty,\rho)\big)$, 
correspond to 
the two essential transcendental 
singularities $U_{\infty\pm}$.
Note that they always intersect 
with an infinite number of neighbourhoods 
$V\big( (p_k,\widetilde{p}_k), \rho') \subset\Omega$ and 
$V\big( (q_k,\infty), \rho') \subset\Omega$, 
associated  to the poles and zeros of $X$. 
In other words the two neighbourhoods $U_{\infty \pm}(\rho)$
intersect an infinite number of neighbourhoods of the
branch points corresponding to the poles and zeros of $X$.

\noindent 
We conclude that the two essential transcendental singularities 
$U_{\infty\pm}$ are non separate.
  
\noindent
From the perspective of the universal cover
$\mathfrak{M}$ of
$\CW_z\backslash\{0,\infty\}$, 
Corollary \ref{cubierta-universal-singularidades}.1--3
applies. 
\end{example}

%%%%%%%%%%%%%%%%%%%%%%%%%%%%%%%%%%%%%%%%
\section{Three applications}
\label{aplicaciones}
\subsection{Maximal domains for the flow: the description of $\R_X$}
\label{flujo-maximal}

Let $X$ be a singular complex analytic vector field
on a Riemann surface $M$. 
Our interest is in 
\emph{local nonstationary 
complex trajectory solutions} of $X$ 
with initial conditions $z_{\tt o} \in M \backslash \mathcal{S}$, \emph{i.e.}

\centerline{
$z(t) : D(0,\rho) \subset  \CC_{t}
\longrightarrow  M, 
\ \
t\longmapsto \Psi_X^{-1}(t),
$}

\noindent  
where $\Psi(z)= \int_{z_{\tt o}}^z \omega_X:
M \backslash \mathcal{S} \longrightarrow \CC_t$,
compare with Equation \eqref{parametro-local}.

\begin{definition}
1) A vector field $X$ is \emph{complete} when 
its  complex trajectory solutions $\{ z(t) \}$ are 
holomorphic for all complex time $t \in \CC_t$ and
all initial condition 
$z_{\tt o} \in M$. 
Otherwise $X$ is \emph{incomplete}.    

\noindent 
2)
A \emph{real incomplete trajectory}
$z({\tt t} ): (a, b) \subseteq \RR \longrightarrow M$ of $X$  
is such that its maximal domain is an strict subset 
$(a, b)$ of $\RR$. 
\end{definition}

The following result is well known,
an elementary proof 
is provided in \cite{LopezMucino}.

\begin{corollary}
\label{familias-campos-completos}
A singular complex analytic vector field $X$ on 
a Riemann surface $M$ 
is complete if and only if
belongs to one of the following families. 

\begin{enumerate}[label=\arabic*),leftmargin=*]
\item
$X$ is rational on $\CW$ with two zeros 
(counted with multiplicity).  

\item
$X$ is polynomial of degree zero or one on $\CC$.

\item
$X$ is polynomial of degree one on $\CC^*$
with zero at $0$.

\item
$X$ is holomorphic on a torus $\CC/ \Lambda$.
\hfill \qed
\end{enumerate}

\end{corollary}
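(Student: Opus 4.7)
The plan is to prove each implication separately, leaning on the preceding lemmas together with an end-compactification argument.

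\emph{Sufficiency} ($\Leftarrow$) is routine verification: in each of the four families the complex flow is an elementary expression defined on all of $\CC\times M$. Case (1) is Lemma \ref{campos-completos-en-la-esfera}. In case (2) the flows are $z\mapsto z+as$ for $X=a\del{}{z}$ and $z\mapsto \e^{as}(z+b/a)-b/a$ for $X=(az+b)\del{}{z}$ with $a\neq 0$; both are entire in $s\in\CC$ and map $\CC$ to $\CC$. In case (3), $X=az\del{}{z}$ has flow $z\mapsto z\,\e^{as}$, entire in $s$ and preserving $\CC^*$. In case (4), a nonzero holomorphic vector field on $\CC/\Lambda$ is $c\del{}{z}$ in a uniformizing coordinate, and the translation flow $z\mapsto z+cs$ descends to the torus.

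\emph{Necessity} ($\Rightarrow$). Complex completeness implies $\RR$-completeness, so we can exploit the absence of incomplete real trajectories. First, if $M=M_\mathfrak{g}$ is compact, Theorem \ref{teo:infinidad-trayectorias-incompletas} forces $X$ to be rational, and then Lemma \ref{familia-campos-racionales-en-M-compacta} forces $X$ to be holomorphic. A compact Riemann surface of genus $\mathfrak{g}\geq 2$ carries no nonzero holomorphic vector field (its tangent bundle has negative degree $2-2\mathfrak{g}$), so $\mathfrak{g}\in\{0,1\}$, giving cases (1) and (4).

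If $M$ is non-compact I pass to an end compactification $\widetilde{M}$, built by iteratively filling in conformal punctures as in the Remark preceding Lemma \ref{campos-completos-en-la-esfera}; the vector field extends to $\widetilde{X}$ on $\widetilde{M}$. At each added point $e$ I rule out three possibilities. A regular nonzero value at $e$ would let the trajectory through $e$ on $\widetilde{M}$ leave $M$ in finite real time. A pole at $e$ would, by Proposition \ref{la-topologia-dice-la-singularidad}, be surrounded by $2k+2$ hyperbolic sectors whose separatrices reach $e$ in finite time, producing incomplete trajectories of $X$ in $M$. An essential singularity at $e$ is excluded by Theorem \ref{Teo:ValAsintotico-TrayIncompleta}, which supplies incomplete trajectories in every neighborhood of such a singularity. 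Hence every added end is a zero of $\widetilde{X}$; in particular there are only finitely many ends, so $\widetilde{M}$ is compact and $\widetilde{X}$ holomorphic. The compact case then applies to $(\widetilde{M},\widetilde{X})$: if $\widetilde{M}$ is a torus then $\widetilde{X}$ is nowhere zero and no end could have been removed, so $M=\widetilde{M}$ (case (4), redundant); if $\widetilde{M}=\CW$, then by Lemma \ref{campos-completos-en-la-esfera} $\widetilde{X}$ has exactly two zeros counted with multiplicity, and $M$ is $\CW$ minus a subset of these zeros. A M\"obius change of coordinates places $M$ in $\{\CW,\CC,\CC^*\}$ and $X$ in one of the forms (1), (2), (3).

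The main obstacle is ruling out essential singularities at ends: this is where Theorem \ref{Teo:ValAsintotico-TrayIncompleta} (and its analogue for the asymptotic value $\infty$, via the elliptic tract picture of Proposition \ref{sing-log-hyper-elipt}) is decisive, since it converts the geometric notion of essential singularity into a concrete supply of incomplete trajectories of $X$ that obstruct completeness.
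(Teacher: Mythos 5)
Your sufficiency check and your treatment of the compact case are fine (and note that the paper itself offers no proof of this corollary: it defers entirely to \cite{LopezMucino}, so there is no internal argument to compare against). The compact case correctly chains Theorem \ref{teo:infinidad-trayectorias-incompletas} and Lemma \ref{familia-campos-racionales-en-M-compacta} to force $X$ holomorphic, and then genus $\mathfrak{g}\leq 1$.

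The genuine gap is in the non-compact case. Your end compactification presupposes that every end of $M$ is a conformal puncture, i.e.\ that $M$ is a compact surface minus finitely many points. That is false for most non-compact Riemann surfaces: the unit disk, an annulus of finite modulus, or any hyperbolic surface with a ``large'' end admits no such filling, and your argument never rules these out --- there is nothing in the proposal showing that, say, the disk carries no nonzero complete holomorphic vector field. Even when all ends are punctures, the step ``every added end is a zero of $\widetilde{X}$, hence there are finitely many ends, hence $\widetilde{M}$ is compact'' is asserted rather than proved: infinitely many puncture-ends could a priori accumulate only at a non-puncture end, which is exactly the configuration your construction cannot handle. The standard repair (and, in essence, the content of \cite{LopezMucino}) is to use completeness directly: if $X\not\equiv 0$ is complete, pick $p$ with $X(p)\neq 0$; the orbit map $s\mapsto\varphi_s(p)$ is a nonconstant holomorphic map $\CC\longrightarrow M$, so by uniformization and Liouville $M$ cannot be hyperbolic, leaving only $\CW$, $\CC$, $\CC^*$ and tori; your zero-counting on $\CW$ via Lemma \ref{campos-completos-en-la-esfera} then finishes cases (1)--(3), and the torus case is immediate. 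With that replacement the rest of your argument goes through.
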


For an incomplete $X$, the interesting phenomenon
is the following.

\begin{definition}
\label{dominio-maximal-de-univalencia}
A
\emph{maximal region of univalence $\mathscr{D}_X$ of $X$}
is the connected Riemann surface obtained by
analytic continuation of a local 
nonstationary complex trajectory solution $z(t)$,
along paths from $t=0$ in $\CC_t$.
\end{definition}

The surface $\mathscr{D}_X$ satisfies that 
$\pi_2 :\mathscr{D}_X \longrightarrow \CC_t$
is a Riemann domain (an unbranched cover).

\begin{example}[Meromorphic vector fields case]
Let $X$ be a meromorphic vector field on $M$, 
non necessarily compact.
The local analytic normal forms,
Proposition \ref{la-topologia-dice-la-singularidad},
show that 
each pole $p$ of $X$ of order/multiplicity $-k\leq -1$ provides a 
exactly $(2k+2)$ hyperbolic sectors, 
hence the same number of separatrices
which are incomplete trajectories 
$z({\tt t})$ having an $\alpha$ or $\omega$--limit
at the pole $p$.
We consider a cover  

\centerline{
$\pi_u: \widehat{M} \longrightarrow 
M \backslash\{ \mathcal{Z}_R\}
$}

\noindent 
that kills classes $[\beta] \in H_1(M, \ZZ)$
of the poles of $\omega_X$ with nonzero residue and the 
nonzero periods of $\omega_X$. 
Note that, since $\omega_X$ is meromorphic 
non zero classes $[\beta] \in H_1(M, \ZZ)$ 
with $\int_\beta \omega_X =0$ may exist; 
these classes are not killed by $\pi_u$. 
The maximal region of univalence of a non stationary
solution 
$z(t)$ of $X$ is the punctured surface

\centerline{$
\mathscr{D}_X = 
\widehat{M} \backslash  \{ \pi_u^{-1}(\mathcal{P} \cup
\mathcal{Z}_0)\}.
$}

\noindent
Moreover, we recognize that  

\centerline{$
\mathscr{D}_X
=
\{ (z,\Psi_X(z) ) \ \vert \
z \in M \backslash ( \mathcal{P} \cup \mathcal{Z} )
\}
= 
\R_X \backslash 
\pi_{1}^{-1}(\mathcal{P} \cup \mathcal{Z}_0). 
$}
\end{example}

The results outlined in
\S\ref{caso-widetilde-Psi}, 
particularly Corollary 
\ref{cubierta-universal-singularidades}
provides us with the following.

\begin{theorem}[Maximal univalence region for trajectory solutions]
\label{teo:cubierta-universal-RX}
Let $X$ be a singular complex analytic vector field
on $M$.
The maximal univalence region for a non stationary
complex solution $z(t)$ of $X$ is
\begin{equation*}
\mathscr{D}_X
=
\{ (z,\Psi_X(z) ) \ \vert \
z \in M \backslash \mathcal{S} \}.
\end{equation*}
 
\noindent 
Moreover, 
$\mathscr{D}_X$ is independent of 
the initial condition
$z_{\tt o} \in 
M \backslash \mathcal{S}$.
\end{theorem}

\begin{proof}
Let $\R_{X}\subset M\times\CW_{t}$ be the Riemann surface 
defined as the graph of 
$\Psi_{X} (z) = \int_{z_{\tt o}}^z \omega_X$ as in Equation 
\eqref{parametro-local}. 
The Riemann surface $\R_X$ is a leaf of the 
singular complex analytic vector field

\centerline{$
f(z) \del{}{z} + \del{}{t} \ \ \
\hbox{ on }  M \times \CC_{t},
$}

\noindent
where $f(z) \del{}{z}$ 
corresponds to $\Psi_X(z)$ due to the Dictionary \eqref{diagramacorresp}.
The singular complex analytic foliation in the 
two dimensional complex
manifold has as leaves:

\noindent 
$\bigcdot$
copies of $\R_X$ under translations
in the $\CC_t$ factor, and
 
\noindent 
$\bigcdot$
horizontal copies of $\{q\} \times \CC_t$
from each zero $q$ of $X$. 

\noindent 
Clearly, $\mathscr{D}_X$ is independent of the
initial condition $z_{\tt o}$.
Since we are considering holomorphic solutions, 
it is necessary to remove  the set
$\pi_1^{-1} (\mathcal{P} \cup \mathcal{Z}_0)$
from $\R_X$.
\end{proof}

\begin{example}
\label{ejemplo-3-puntos}
Consider the vector field

\centerline{
$X(z)=z(z-1)\e^{-z} \del{}{z}\in\E(2,0,1)$,
}

\noindent
with singular set 
$\mathcal{S}=\mathcal{S}_{R}
=\{0,1,\infty\}\subset\CW_z$.
Its associated multivalued additively automorphic 
singular complex analytic function is

\centerline{$\Psi_X(z)=
{\displaystyle  \int^z } 
\dfrac{\e^z}{\zeta (\zeta-1)}d\zeta.
$}

\noindent
The residues of the 1--form of time $\omega_X$ at 
$\mathcal{S}_{R}$ are 
$\{-1,\e, 1-\e\}$, respectively.

\noindent
A fundamental domain, as in \S\ref{construccion-region-fundamental},
can be chosen as follows.
Let $\gamma_1({\tt t})=1+ i {\tt t}$  and 
$\gamma_2({\tt t})=- i {\tt t}$, for ${\tt t}\in(0,\infty)$. 
Furthermore, let $\Gamma=\gamma_1 \cup \gamma_2$.
Thus, a fundamental domain is 

\centerline{
$\Lambda=
\Big(
\CW\backslash\overline{\Gamma}
\Big)
\cup (\gamma_{1+} \cup \gamma_{2+})$.
}

\noindent
Note that the singularities of $\Psi_{X,\,\Lambda}^{-1}$
are two $\star$--transcendental singularities over 
$\infty\in\CW_t$ corresponding to the two zeros of 
$X$, and two logarithmic singularities: 
$U_\infty$ over $\infty$
and $U_a$ over the finite asymptotic value 

\centerline{
$a=\lim\limits_{{\tt t}\to\infty} \int^{\alpha({\tt t})}
\omega_X = 0$.
}

\noindent
In other words, $\Lambda$ contains an elliptic tract $U_\infty (\rho)$
(corresponding to the logarithmic singularity 
over $\infty$) and a hyperbolic tract $U_a (\rho)$ (corresponding to 
the logaritmic singularity over the finite asymptotic
value $a$).
See Figure \ref{campos-Delta}.a.

\noindent
Since $M\backslash\overline{\mathcal{S}_{R}}
=\CW\backslash\{0,1,\infty\}$, 
then the universal cover
$\mathfrak{M}=\Delta$ is the unit disk.
Moreover $\mathfrak{M}$ is composed of infinite copies of 
an ideal hyperbolic quadrangle $\Lambda$
glued together at the two borders 
$\gamma_1$ and $\gamma_2$, see Figure 
\ref{campos-Delta}.b.
The function $\widetilde{\Psi_X} :\Delta \longrightarrow \CW$
is holomorphic. 
The singular points of $\widetilde{\Psi_X}$ 
form a countable and dense
set on the boundary $\partial\Delta$ of the disk $\Delta$;
they are precisely the ideal points of 
$\widetilde{\Psi_X}^{-1}$.
The reader can compare the present singularities of 
$\widetilde{\Psi_X}$ on the boundary $\partial \Delta$
with the classical theorem of 
A. I. Plessner for singularities
on the boundary, see \cite{Pommerenke} \S 6.4. 
Clearly, $\widetilde{\Psi_X}$ is invariant under a 
Fuchsian group (the group of deck transformations).

%%%%%%%%%%%%%%%%%%%%%%%%%%%%%%%%%%%%%%
\begin{figure}[htbp]
\begin{center}
\includegraphics[width=0.9\textwidth]{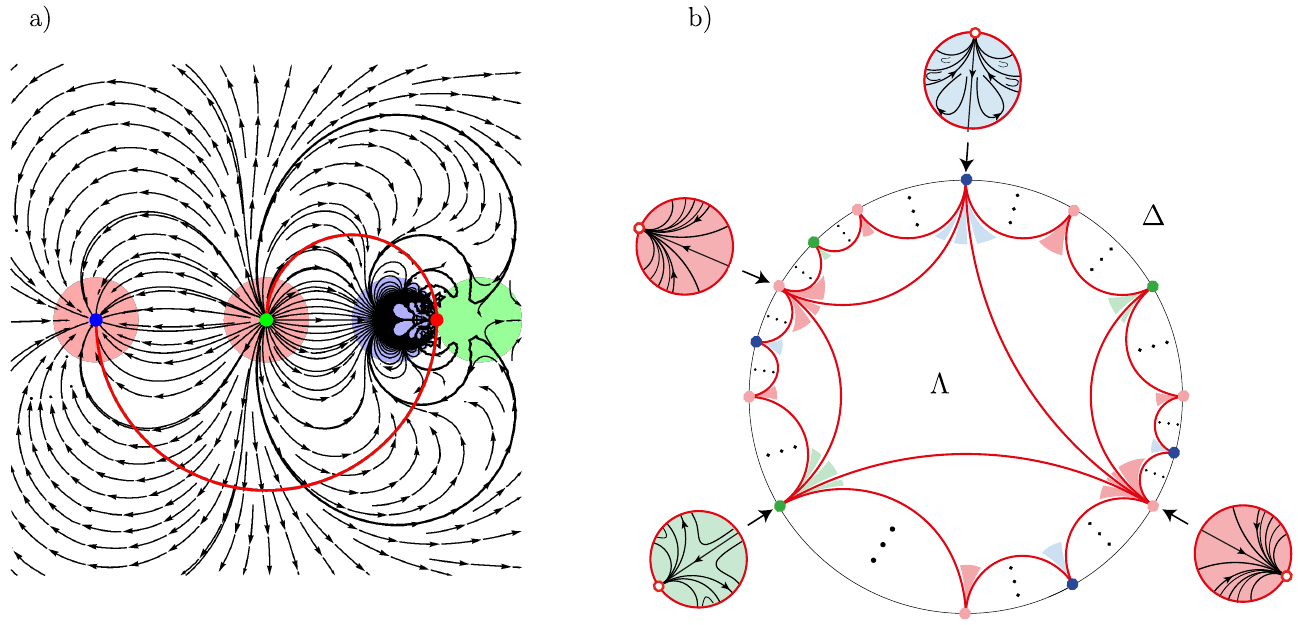}
\caption{
%campos-Delta.pdf.
We regard
the vector field $X(z)=z(z-1)\e^{-z} \del{}{z}$,
Example \ref{ejemplo-3-puntos}.
(a) in order to 
better visualize the behaviour, we 
show the pullback
vector field $Y(w)=(T^* X)(w)$, with $T(w)=w/(-w+1)$; 
the blue and green points correspond to 
the zeros of $Y$, and the red point is the 
essential singularity of $Y$. 
The red arcs of a circle correspond to the inverse 
images of $\Gamma$.
The hyperbolic and elliptic tracts are shaded 
green and blue respectively, 
while the $\star$--transcendental singularities 
corresponding to the zeros are pink.
(b) Shows the universal cover $\mathfrak{M}\cong\Delta$ and 
some copies of the fundamental domain $\Lambda$.
Note that the ideal points of $\widetilde{\Psi_X}^{-1}$ 
form a countable dense set on the boundary 
$\partial\Delta$ of the disk $\Delta$. 
Each neighbourhood of these ideal points is composed
by an infinite number of angular sectors with angle 0.
Each angular sector is a tract  
of the ideal point of $\Psi_{X,\,\Lambda}^{-1}$ with
the same behaviour. 
In this example: hyperbolic tracts (green), elliptic tracts (blue)
and parabolic sectors (pink).
}
\label{campos-Delta}
\end{center}
\end{figure}

%%%%%%%%%%%%%%%%%%%%%%%%%%%%%%%%%%%%%%

\end{example}

\subsection{Localizing incomplete trajectories}\label{localizingIncomplete}
In \cite{Guillot} A. Guillot explores 
relations between complex differential equations 
and the geometrical properties of their
(incomplete) trajectories. We recall facts.  

\begin{proposition}\label{prop:infinidad-trayectorias-incompletas}
Let $X$ be a
singular complex analytic vector field on  
a compact Riemann surface $M_\mathfrak{g}$. 

\begin{enumerate}[label=\arabic*),leftmargin=*]
\item
A vector field $X$ is rational and nonholomorphic on 
$M_\mathfrak{g}$ if and only if
$X$ has a finite (nonzero) number of incomplete trajectories.

\item
Every nonrational, 
singular complex analytic vector field $X$ on $M_\mathfrak{g}$, 
has an infinite number of incomplete trajectories. 
\end{enumerate}
\end{proposition}

\begin{proof}
Assertion (1) uses the normal form 
in Proposition \ref{la-topologia-dice-la-singularidad}.
For assertion (2), the argument is by contradiction, 
if the number of 
incomplete trajectories is
finite, then by (1) $X$ is rational.
\end{proof}

Note that the above proof is not constructive;
however,
the appearance of incomplete trajectories
in the vicinity of an essential singularity of $X$ 
is explained
in the next subsection.

\begin{remark}
\label{rem-tray-incompletas}
Let $X$ be a rational vector field on 
the Riemann sphere. 
There exists an
incomplete trajectory $z({\tt t} )$ of $X$ 
having $\alpha$ or $\omega$--limit at $p\in\CW_z$
if and only if 
$p$ is a pole of $X$, equivalently
$p$ is a critical point of $\Psi_X$ with
a finite critical value 
$\widetilde{p}=\Psi_{X}(p) \in\CC_t$.
In particular, if $\alpha_p ({\tt t} )$ is a path with $\lim\limits_{{\tt t} \to\infty} \alpha_p({\tt t})=p$,
then  

\centerline{
$\lim\limits_{{\tt t} \to\infty}\Psi_X(\alpha_p ({\tt t} ))=\widetilde{p}$.}
\end{remark}

With this in mind, 
the following is straightforward.

\begin{theorem}[Incomplete trajectories and finite singular values]
\label{teo:tray-asintoticas-e-incompletas}
Let $X$ be a singular complex analytic vector field on
$M$.
The following statements are equivalent. 
\begin{enumerate}[label=\arabic*),leftmargin=*]
\item
There exists an
incomplete trajectory $z({\tt t} )$ of $X$ 
having $\alpha$ or $\omega$--limit at 
$z_{\tt s} \in M$.

\item
There exists a finite singular value 
$a \in\CC_t$ of $\Psi_X$, whose asymptotic path 
$\alpha_a ({\tt t} )$ is a trajectory of $\Re{X}$ ending at $z_{\tt s} \in M$.
\end{enumerate}
\end{theorem}

\begin{proof}
The argument
follows directly from the definitions of asymptotic 
path, of a finite asymptotic value of $\Psi_X$ and 
of incomplete trajectories of $X$.
\end{proof}

\begin{remark} 
Theorem \ref{teo:tray-asintoticas-e-incompletas} 
is independent 
of whether $\Psi_X$ is single or multivalued.
\end{remark}
A natural question to ask is where these incomplete 
trajectories are localized in a vicinity of an essential 
singularity. The assertion is as follows.

\begin{theorem}[Localizing incomplete trajectories]
\label{Teo:ValAsintotico-TrayIncompleta}
Let $X$ be a singular complex analytic vector field on  
$M$ with an essential singularity at $z_{\tt s} \in M$.

\begin{enumerate}[label=\arabic*),leftmargin=*]
\item
Any neighbourhood $U_a(\rho)$, 
of an essential transcendental singularity $U_a$ of 
$\Psi_X^{-1}$ over a finite 
asymptotic value $a\in\CC_t$,  
contains an infinite number
of incomplete trajectories of $X$.

\item
If $\Psi_X$ has no finite asymptotic values 
at $z_{\tt s}$,
then $X$ has an infinite number of poles accumulating at 
$z_{\tt s} \in M$.
\end{enumerate}
\end{theorem}

\begin{proof}
For statement (1), 
first consider the case when $U_a$ is a logarithmic singularity of $\Psi_X^{-1}$.
Recalling Theorem \ref{singularidades-algebraicas-y-logaritmicas}.1, 
note that for $\rho>0$ small enough, 
the neighbourhood $U_{a}(\rho)$ 
of a logarithmic singularity $U_a$ over a finite asymptotic value $a$ is a hyperbolic tract.
It consists of an infinite number of hyperbolic sectors, 
and the separatrices of each hyperbolic sector are incomplete trajectories. 
Thus any neighbourhood $U_{a}(\rho)$ of the logarithmic singularity $U_a$ contains an 
infinite number of incomplete trajectories.

On the other hand, if the transcendental singularity $U_a$ of $\Psi_X^{-1}$ 
is nonlogarithmic,
by Theorem \ref{teo-logaritmicas-separada}, $U_a$ is nonseparate.
Thus
for any $\rho_a>0$, the neighbourhood
$U_a(\rho_a)$ contains an infinite number of neighbourhoods 
$U_{a_\sigma}(\rho_{\sigma})$, for appropriate
$\{ \rho_\sigma>0 \}$. 
Note that the collection $\{a_\sigma\}$ is bounded, \emph{i.e.} the $a_\sigma$ 
are all finite, and 
satisfy 
\begin{equation}
\label{contencion-de-vecindades}
U_{a_\sigma}(\rho_{\sigma}) \subset U_a(\rho_a).
\end{equation}

\noindent
If an infinite number of the $a_\sigma$ are critical values, we are done: 
these critical values have corresponding critical points 
that are poles of $X$.
Thus, 
by \eqref{contencion-de-vecindades},
any neighbourhood $U_{a}(\rho)$ of the nonlogarithmic singularity $U_a$ contains an 
infinite number of incomplete trajectories.

\noindent
Otherwise the collection $\{a_\sigma\}$ contains an infinite number of distinct (finite) 
a\-symp\-to\-tic values.
Without loss of generality,
we shall assume that the $\{a_\sigma\}$ are all asymptotic values and 
that they once again satisfy \eqref{contencion-de-vecindades}.
Now recall that the associated Riemann surface $\R_{X}$ has as its (ideal) boundary 
precisely the branch points corresponding to all the asymptotic values of $\Psi_X$. 

\noindent
Since the (ideal) boundary of  $\R_{X}$ is totally disconnected, 
then every single branch point corresponding
to the singularities $U_{a_\sigma}$ has a trajectory $\widetilde{\alpha}_\sigma ({\tt t} )\subset\R_X$
arriving to it. This trajectory projects,
via $\pi_1$, to an incomplete trajectory 
$\alpha_\sigma ({\tt t} )\subset U_{a_\sigma}(\rho_{\sigma}) 
\subset U_a(\rho_a)
\subset M $.

\medskip
The proof of statement (2) is by contradiction.
Assume that there is only a finite number of poles of $X$, the number of incomplete
trajectories is then finite. 
This contradicts Proposition 
\ref{prop:infinidad-trayectorias-incompletas}.
\end{proof}

The interested reader can compare the above results with theorems 1.2 and 1.3 of \cite{Sixsmith}.

\begin{remark}
Whenever there is an essential singularity of $X$, 
we have the dichotomy described below.

\noindent
$\bigcdot$ 
If $\Psi_X$ has no finite asymptotic values, 
then $X$ has an infinite number of poles accumulating at the essential singularity of $X$ at 
$z_{\tt s}\in M$.

\noindent
$\bigcdot$ 
If $X$ only has a finite number of poles, 
then $\Psi_X$ has (at least) one finite asymptotic
value.
\end{remark}

\subsubsection{What can be said about $X$ without an explicit knowledge of $\Psi_X$?}

\noindent
As a direct consequence of Theorem \ref{Teo:ValAsintotico-TrayIncompleta}, 
we can extend Langley's result (see \cite{Langley}) from
the case when $f^{-1}$ has a logarithmic singularity over $\widetilde{a}=\infty$, 
to the general case:

\begin{corollary}
\label{sing-trancendente-de-f}
Let $X=f(z)\del{}{z}$ be a singular complex
analytic vector field on 
$M$
with an essential singularity at 
$z_{\tt s}\in M$.
Any neighbourhood $U_{\widetilde{a}}(\rho)$ of a 
transcendental singularity
$U_{\widetilde{a}}$ of $f^{-1}$
over a nonzero asymptotic value $\widetilde{a}
\in\CW_t\backslash\{0\}$
contains an infinite number of incomplete trajectories of 
$X$.
\end{corollary}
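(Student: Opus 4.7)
The plan is to reduce the corollary to Theorem \ref{Teo:ValAsintotico-TrayIncompleta} by producing, inside $U_{\widetilde{a}}(\rho)$, the neighborhood of a transcendental singularity of $\Psi_X^{-1}$ over a finite asymptotic value $a \in \CC_t$ of $\Psi_X$, and then invoking that theorem on the nested neighborhood.

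First I would translate the hypothesis on $f$ to one on $\Psi_X$. Fix an asymptotic path $\alpha_{\widetilde{a}}\colon (0,\infty) \to M$ realizing $U_{\widetilde{a}}$, and shrink $\rho$ so that $\abs{f}$ is uniformly bounded away from $0$ on $U_{\widetilde{a}}(\rho)$; this is possible because $\widetilde{a} \neq 0$, using the spherical metric when $\widetilde{a} = \infty$. In particular, $X$ has no poles in $U_{\widetilde{a}}(\rho)$, and $\omega_X = dz/f$ is a nonsingular, bounded $1$--form there.

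The key step is to show that $\Psi_X$ acquires a corresponding finite asymptotic value $a$, realized by a trajectory $z(\tau)$ of $X$ lying in $U_{\widetilde{a}}(\rho)$ and asymptoting to $e$. For $\widetilde{a} = \infty$, the estimate $\abs{\omega_X} < \rho \abs{dz}$ on $U_\infty(\rho)$ forces $z(\tau)$ to reach $e$ in finite real time $\tau_0$, so $a := \lim_{\tau \to \tau_0} \Psi_X(z(\tau))$ is a finite asymptotic value of $\Psi_X$. For $\sigma$ sufficiently small, the associated neighborhood $U_a(\sigma)$ of the transcendental singularity of $\Psi_X^{-1}$ over $a$ lies inside $U_{\widetilde{a}}(\rho)$. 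Applying Theorem \ref{Teo:ValAsintotico-TrayIncompleta}(1) to $U_a$ then yields an infinite family of incomplete trajectories of $X$ contained in $U_a(\sigma) \subset U_{\widetilde{a}}(\rho)$, as desired.

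The main obstacle is the construction of the finite asymptotic value $a$ of $\Psi_X$ from $\widetilde{a}$ of $f$, together with the verification of the nesting $U_a(\sigma) \subset U_{\widetilde{a}}(\rho)$. The case $\widetilde{a} = \infty$ is transparent and recovers Langley's setting: the smallness of $\omega_X$ gives bounded $g_X$--distance from interior points of $U_\infty(\rho)$ to the ideal boundary at $e$. For finite non-zero $\widetilde{a}$, $\omega_X$ is only bounded (not small), so the argument has to follow $X$--trajectories inside $U_{\widetilde{a}}(\rho)$ rather than arbitrary asymptotic paths of $f$, and exploit the essential singularity of $X$ at $e$ via Theorem \ref{teo:infinidad-trayectorias-incompletas} to ensure that the $\tau$-time along such a trajectory stays bounded.
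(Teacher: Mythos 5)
Your strategy is the same as the paper's: manufacture a finite asymptotic value $a\in\CC_t$ of $\Psi_X$ from the non-zero asymptotic value $\widetilde{a}$ of $f$, and then quote Theorem \ref{Teo:ValAsintotico-TrayIncompleta}(1). The paper does this in three lines --- along an asymptotic path $\widetilde{\alpha}(\tau)$ for $\widetilde{a}$ the quantity $\abs{1/f(\widetilde{\alpha}(\tau))}$ stays in a small disk about $1/\abs{\widetilde{a}}$, whence it asserts $\lim_{\tau\to\infty}\Psi_X(\widetilde{\alpha}(\tau))=a\in\CC_t$ --- and you are, if anything, more explicit about the nesting $U_a(\sigma)\subset U_{\widetilde{a}}(\rho)$ needed to localize the conclusion, which the printed proof leaves implicit.

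The problem is that the step you yourself single out as the main obstacle, namely the convergence of $\Psi_X$ to a finite limit along some path in $U_{\widetilde{a}}(\rho)$ tending to $e$, is not actually carried out, and neither of the two devices you offer closes it. For $\widetilde{a}=\infty$, the inequality $\abs{\omega_X}<\rho\abs{dz}$ on $U_{\infty}(\rho)$ only bounds the elapsed time (the $g_X$--length) of a trajectory by $\rho$ times its Euclidean length, and the latter is infinite for any path tending to $e$ in the standard situation $M=\CC_z$, $e=\infty$; for instance $f(z)=z(1+\e^{z})$ has a transcendental singularity of $f^{-1}$ over $\infty$ whose neighborhoods contain a left half plane on which $\abs{\omega_X}\sim\abs{dz}/\abs{z}$ is pointwise as small as desired, yet $\Psi_X\sim\log z\to\infty$ there, so no finite asymptotic value of $\Psi_X$ is produced by this estimate. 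For finite $\widetilde{a}$ the appeal to Theorem \ref{teo:infinidad-trayectorias-incompletas} is a non sequitur: that theorem is a global, non-constructive existence statement on a compact $M_{\mathfrak{g}}$ and gives no information on whether any incomplete trajectory meets $U_{\widetilde{a}}(\rho)$, nor any bound on $\tau$ along a prescribed trajectory inside it. What must actually be proved (and what the paper itself only asserts) is that some asymptotic path of $f$ inside $U_{\widetilde{a}}(\rho)$ --- preferably a trajectory of $\Re{X}$, so that Lemma \ref{lem-tray-incompletas} applies --- has finite $g_X$--length, i.e.\ that $\int\omega_X$ converges along it; pointwise boundedness (or smallness) of $\omega_X$ alone does not give this, so the reduction to Theorem \ref{Teo:ValAsintotico-TrayIncompleta} is announced but not achieved.
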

\begin{proof}
By definition,  
$f: M \longrightarrow\CW_t$
is transcendental meromorphic.
Since $f$ has a nonzero asymptotic value 
$\widetilde{a}$, 
then it follows that 
there is an asymptotic path 
$\widetilde{\alpha}({\tt t} )$ 
of $f$ such that 
$\abs{ \frac{1}{f( \widetilde{\alpha}({\tt t} ) )} } 
\subset 
D\Big( \frac{1}{ \abs{\widetilde{a}} }, \varepsilon \Big)$ 
for small enough $\varepsilon>0$ and 
large enough ${\tt t} >0$. 
Thus, 

\centerline{
$\lim\limits_{{\tt t} \to\infty}\Psi_X\big( \widetilde{\alpha}({\tt t} ) \big)=a \in\CC_t$,}

\noindent  
{\it i.e.} $\Psi_X$ has $a$ as a finite asymptotic value.
By Theorem \ref{Teo:ValAsintotico-TrayIncompleta}, 
we are done.
\end{proof}

\begin{lemma}\label{flog-Psilog}
The following assertions are equivalent.

\begin{enumerate}[label=\arabic*),leftmargin=*]
\item
$f^{-1}$ has a logarithmic singularity over an asymptotic value 
$\widetilde{a} \in \CW$.

\item
$\Psi_X^{-1}$ has a logarithmic 
singularity over the corresponding asymptotic value $a \in\CW$ as
in \eqref{valasintPsi}.
\end{enumerate}
\end{lemma}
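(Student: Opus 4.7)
The plan is to use Proposition \ref{sing-log-hyper-elipt} as a bridge: it characterizes a logarithmic singularity of $\Psi_X^{-1}$ over $a$ as biholomorphic equivalence of $(U_a(\rho),X)$ with the hyperbolic tract $(\{\Re w>0\},\e^w\del{}{w})$ (when $a\in\CC_t$) or the elliptic tract $(\{\Re w<0\},\e^w\del{}{w})$ (when $a=\infty$). The key remark is that in each of these standard tract models the coefficient function $f_{\mathrm{model}}(w)=\e^w$ is itself a universal covering of a punctured disk at $\infty$ or at $0$: on the right half-plane $\e^w$ covers $\{|t|>1\}=D(\infty,1)\setminus\{\infty\}$, and on the left half-plane $\e^w$ covers $\{0<|t|<1\}=D(0,1)\setminus\{0\}$. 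Consequently $f_{\mathrm{model}}^{-1}$ has a logarithmic singularity over $\widetilde{a}=\infty$ in the hyperbolic case and over $\widetilde{a}=0$ in the elliptic case, so the same tract characterization simultaneously captures both conditions (1) and (2).

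For the direction $(2)\Rightarrow(1)$, Proposition \ref{sing-log-hyper-elipt} gives a biholomorphism $\Phi:(U_a(\rho),X)\to (\HH^{\pm},\e^w\del{}{w})$; transporting the universal cover $\e^w$ of $\{|t|>1\}$ (or $\{0<|t|<1\}$) back through $\Phi^{-1}$ exhibits $f|_{U_a(\rho)}$ as a universal cover of the punctured disk at $\widetilde{a}\in\{0,\infty\}$, proving (1). For $(1)\Rightarrow(2)$, from a universal covering $f:\widetilde{U}_{\widetilde{a}}(\widetilde{\rho})\to D(\widetilde{a},\widetilde{\rho})\setminus\{\widetilde{a}\}$, the simply connected $\widetilde{U}_{\widetilde{a}}(\widetilde{\rho})$ admits a biholomorphism $\phi$ onto a half-plane in which $f$ takes the standard exponential form. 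In this coordinate $X=f\del{}{z}$ becomes the hyperbolic or elliptic tract model, Proposition \ref{sing-log-hyper-elipt} applies in reverse, and the resulting asymptotic value $a=\lim_{\tau\to\infty}\Psi_X(\widetilde{\alpha}(\tau))$ of \eqref{valasintPsi} is precisely the center of the punctured disk that $\Psi_X$ covers universally.

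The main obstacle is the compatibility between two a priori different biholomorphisms: the one realizing the universal cover of $f$ and the rectifying chart for the vector field $X$ (equivalently, the chart provided by $\Psi_X$ in the tract model). Reconciling them is exactly what forces the asymptotic values to match as $\widetilde{a}=\infty\leftrightarrow a\in\CC_t$ (hyperbolic tract) and $\widetilde{a}=0\leftrightarrow a=\infty$ (elliptic tract), and it is why the proof ultimately depends on the exponential normal form $f=\e^{\pm\phi}$ valid in these two cases rather than on the generic expansion $f=\widetilde{a}+\e^{-\phi}$ which would force an additional nontrivial conformal factor when passing from $\int d\zeta/f$ to a pure exponential.
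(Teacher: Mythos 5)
Your route is genuinely different from the paper's: you pivot on Proposition \ref{sing-log-hyper-elipt} and the tract models $\mathscr{U}_{H}$, $\mathscr{U}_{E}$, whereas the paper argues directly from the universal--covering normal forms, writing $f\circ\phi=\exp$ for $(1)\Rightarrow(2)$ and differentiating $\Psi_X\circ\phi=\exp$ for $(2)\Rightarrow(1)$. Unfortunately the step on which your argument pivots does not survive the transport. If $\Phi$ is a biholomorphism with $\Phi_{*}X=\e^{w}\del{}{w}$, the coefficient transforms as a vector field, not as a function: $f(z)\,\Phi'(z)=\e^{\Phi(z)}$, hence $f=\e^{\Phi}/\Phi'$ and not $\e^{\Phi}$. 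So the (correct) observation that $f_{\mathrm{model}}(w)=\e^{w}$ is a universal covering of $D(\infty,1)\setminus\{\infty\}$, resp. $D(0,1)\setminus\{0\}$, does not transport through $\Phi^{-1}$ to the statement that $f|_{U_a(\rho)}$ is a universal covering of a punctured disk: the factor $1/\Phi'$ is unconstrained by the tract model alone. Worse, the transport is circular: since $\Psi_X-a=-\e^{-\Phi}$ one computes $\Phi'=1/\big(f\cdot(a-\Psi_X)\big)$ and $\e^{\Phi}/\Phi'\equiv f$ identically, so conjugating the model back merely returns the definition of $f$ and yields no information about its valence. Put differently, ``$f^{-1}$ has a logarithmic singularity'' is not an invariant of the biholomorphism class of the pair $\big(U_a(\rho),X\big)$ --- it depends on how the tract sits in the $z$--coordinate --- so no argument that only invokes Proposition \ref{sing-log-hyper-elipt} can decide it.

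You do flag this as ``the main obstacle'' in your closing paragraph, but you then assert, rather than prove, that reconciling the covering chart of $f$ with the rectifying chart of $X$ ``is exactly what forces the asymptotic values to match''; that reconciliation \emph{is} the content of the lemma. The identity one must actually control is $f(\phi(w))=\phi'(w)\,\e^{-w}$, obtained by differentiating $\Psi_X\circ\phi=\exp$ (this is the relation the paper's proof differentiates), and the whole point is to handle the conformal factor $\phi'$ of the uniformizer of the tract; your proposal never carries out this step in either direction. A secondary issue: your dichotomy only produces $\widetilde{a}\in\{0,\infty\}$, while the statement (and its use in Proposition \ref{hipotesisf}) allows arbitrary $\widetilde{a}\in\CW$; for $\widetilde{a}\in\CC^{*}$ the covering normal form is $f=\widetilde{a}+\widetilde{\rho}\,\e^{-\phi}$, the quantity $1/f$ tends to the nonzero constant $1/\widetilde{a}$, and neither of your two exponential models applies.
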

\begin{proof}
$(1)\Rightarrow(2)$. 
From the definition, a transcendental singularity 
$U_{\widetilde{a}}$ of $f^{-1}$
is a logarithmic singularity over $\widetilde{a}$ 
if $f:
U_{\widetilde{a}}(\rho)
\longrightarrow 
D(\widetilde{a},\rho)\backslash\{\widetilde{a}\}\subset\CW$ 
is a universal covering 
for some $\rho>0$. 
Hence, there exists a biholomorphism 
$\phi:D(0,r)\longrightarrow 
U_{\widetilde{a}}(\rho)$ 
such that $f(\phi(w))=\exp(w)$ for small enough $r>0$.
In other words, $\Psi_X^{-1}$ has a 
logarithmic singularity over $a$.

\noindent
$(1)\Leftarrow(2)$.
Since $\Psi_X$ is a universal cover, locally $\Psi_X(\phi(w))=\exp(w)$ 
so $f(\psi(w))= \frac{d}{dw} \exp(w) =\exp(w)$,
{\it i.e.} $f$ is a universal covering for some $\rho>0$.
\end{proof}

The following complements Corollary \ref{sing-trancendente-de-f}.
Compare with \cite{Langley} theorem 1.2.

\begin{proposition}\label{hipotesisf}
Let 
$f: M \longrightarrow\CW_t$
be a transcendental meromorphic function,
such that $f^{-1}$ has a logarithmic singularity $U_{\widetilde{a}}$ 
over $\widetilde{a}\in\CW_t$.

\begin{enumerate}[label=\arabic*),leftmargin=*]
\item
If the singularity $U_{\widetilde{a}}$ of $f^{-1}$ is over a nonzero asymptotic value
$\widetilde{a} \in \CC^*\cup\{\infty\}$,
then $X$, at $z_{\tt s}\in M$,
has an infinite number of hyperbolic sectors
and an infinite number of incomplete trajectories.

\item
If the singularity $U_{\widetilde{a}}$ of $f^{-1}$ is over the asymptotic value
$0=\widetilde{a} \in\CC_{t}$,
then $X$ at $z_{\tt s}\in M$
has an infinite number of elliptic sectors.
\end{enumerate}
\end{proposition}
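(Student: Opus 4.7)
The plan is to bootstrap from $f$ to $\Psi_X$ using Lemma~\ref{flog-Psilog}, then read off the local geometry from Proposition~\ref{sing-log-hyper-elipt}, and finally import the trajectory count in case (1) from Theorem~\ref{Teo:ValAsintotico-TrayIncompleta}. Concretely, by Lemma~\ref{flog-Psilog}, the hypothesis that $f^{-1}$ has a logarithmic singularity $U_{\widetilde{a}}$ over $\widetilde{a}\in\CW_t$ immediately gives a logarithmic singularity $U_a$ of $\Psi_X^{-1}$ over the corresponding asymptotic value $a\in\CW_t$ defined by \eqref{valasintPsi}. Everything then reduces to pinning down whether $a$ is finite or $\infty$.

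The key intermediate claim is the dichotomy
\[
\widetilde{a}=0 \ \Longleftrightarrow\ a=\infty.
\]
To establish it I would work in the standard universal cover model: the log singularity of $f$ over $\widetilde{a}$ provides a biholomorphism $\phi:\{\Re w < \log\rho\}\longrightarrow U_{\widetilde{a}}(\rho)$ with $f\circ\phi(w)=\widetilde{a}+\exp(w)$ (in the case $\widetilde{a}\in\CC$) or $f\circ\phi(w)=\exp(-w)$ (in the case $\widetilde{a}=\infty$). Plugging into $\Psi_X=\int dz/f(z)$: if $\widetilde{a}\neq 0$, then $1/f$ stays bounded along $\phi$ and the pulled-back 1--form is integrable on the half-plane, so the limit $a$ in \eqref{valasintPsi} is finite; if $\widetilde{a}=0$, then $1/f(\phi(w))\sim \exp(-w)$ blows up exponentially as $w\to -\infty$ and the integral diverges, forcing $a=\infty$.

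With the dichotomy in hand, apply Proposition~\ref{sing-log-hyper-elipt}. In case (1) the value $a$ is finite, so $U_a(\rho)$ is a hyperbolic tract of $X$, and by Definition~\ref{tract-eliptico-hiperbolico} it is biholomorphic to $\mathscr{U}_H=(\{\Re z>0\},\e^{z}\del{}{z})$; since this model vector field is $2\pi i$-periodic in $z$ its restriction to each horizontal strip contributes a hyperbolic sector of $\Re X$, yielding infinitely many such sectors inside $U_a(\rho)$. Now invoke Theorem~\ref{Teo:ValAsintotico-TrayIncompleta}.(1), whose hypothesis (transcendental singularity over a \emph{finite} asymptotic value) is exactly what Step~2 secured: this produces the infinite family of incomplete trajectories of $X$ accumulating at $e\in M$. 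In case (2), $a=\infty$ and $U_a(\rho)$ is instead an elliptic tract, biholomorphic to $\mathscr{U}_E=(\{\Re z<0\},\e^{z}\del{}{z})$; the same periodicity argument yields infinitely many elliptic sectors.

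The main obstacle is the dichotomy $\widetilde{a}=0\Leftrightarrow a=\infty$, since Lemma~\ref{flog-Psilog} asserts the correspondence $f^{-1}$-log $\leftrightarrow \Psi_X^{-1}$-log but is silent about where on $\CW_t$ the values land; one has to do the explicit universal-cover computation above. Once that is in place, the rest is a direct application of Proposition~\ref{sing-log-hyper-elipt} plus the periodicity of the canonical model $\e^{z}\del{}{z}$, and statement (1)'s trajectory count is handed over to Theorem~\ref{Teo:ValAsintotico-TrayIncompleta}.
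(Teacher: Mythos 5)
Your overall route coincides with the paper's: Lemma \ref{flog-Psilog} to transfer the logarithmic singularity from $f^{-1}$ to $\Psi_X^{-1}$, a dichotomy deciding whether the corresponding value $a$ is finite or infinite, and then Proposition \ref{sing-log-hyper-elipt} (the paper closes case (1) by reading the incomplete trajectories directly off the hyperbolic tract rather than citing Theorem \ref{Teo:ValAsintotico-TrayIncompleta}, which is a cosmetic difference). The one place where you supply detail that the paper omits is the dichotomy $\widetilde{a}=0\Leftrightarrow a=\infty$, and that is exactly where your argument has a genuine gap.

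The inference ``$1/f$ stays bounded along $\phi$, hence the pulled-back 1--form is integrable on the half-plane'' is not valid: the pullback of $\omega_X=dz/f$ is $\phi'(w)\,dw/(\widetilde{a}+\e^{w})$, and you have no control on $\phi'$. Worse, for $\widetilde{a}\in\CC^*$ the estimate cuts the other way: on the tract $\omega_X\sim \widetilde{a}^{-1}\,dz$, so $\Psi_X(z)\sim\widetilde{a}^{-1}(z-z_0)$, which diverges along any asymptotic path tending to $e$ (say $e=\infty$ in $M=\CC_z$); the corresponding value is then $a=\infty$, not a finite value. The paper's own Example \ref{Ejemplo-IntExpExp} realizes this failure: $f(z)=\e^{\e^{z}}$ has a logarithmic singularity of $f^{-1}$ over $\widetilde{a}=1$ on the far left half-plane, yet there $\Psi_X(z)\approx z$, the corresponding singularity of $\Psi_X^{-1}$ is $U_{\infty,-}$ over $\infty$, and the paper itself shows it is non separate, hence \emph{non} logarithmic --- so for $\widetilde{a}\in\CC^*$ both your dichotomy and the appeal to Lemma \ref{flog-Psilog} break down. (To be fair, the paper's proof asserts the same dichotomy, deriving it from the finiteness of zeros and poles of $f$ in the tract, and is exposed to the same objection; the only sub-case in which ``$a$ finite'' is genuinely forced is $\widetilde{a}=\infty$, where $1/(f\circ\phi)$ decays exponentially --- and even there one must bound $\phi'$, e.g. by a Koebe-type distortion estimate, which neither you nor the paper supplies.) The remaining steps --- the periodicity of $\e^{z}\del{}{z}$ producing infinitely many hyperbolic or elliptic sectors, and Theorem \ref{Teo:ValAsintotico-TrayIncompleta}.(1) for the incomplete trajectories --- are fine once a finite $a$ is secured, so any repair must happen at the dichotomy.
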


\begin{proof}
Because of Lemma \ref{flog-Psilog}, 
it follows that $\Psi_X^{-1}$ has a logarithmic singularity over
$a$, recall Equation \eqref{valasintPsi}.
By Theorem \ref{teo-logaritmicas-separada}, 
$f$ has at most a finite number of zeros and poles 
in the exponential tract.
Thus 

\centerline{
$\lim\limits_{{\tt t} \to\infty}\Psi_X\big( \widetilde{\alpha}({\tt t} ) \big)
=
\begin{cases}
a \in\CC_t & 
\text{if } \widetilde{a} \in \CC^*\cup\{\infty\}, 
\\
a=\infty & \text{if } \widetilde{a} = 0,
\end{cases}$
}

\noindent
and
hence by Theorem \ref{singularidades-algebraicas-y-logaritmicas} we are done.
\end{proof}

%%%%%%%%%%%%%%%%%%%%%%%%%%%%%%%%%%%%%%

\subsection{Riemann $\xi$--vector field}
\label{xi-campo-Broughan-subseccion}
Let 
\begin{equation}
\label{xi-campo}
X_\xi(z) =
\xi(z) \del{}{z}
\end{equation}
be the entire 
\emph{Riemann $\xi$--vector field}, 
arising by considering the Riemann $\xi$--function as in
K. Broughan \emph{et al.} \cite{Broughan}, 
\cite{Broughan-Barnett}.
The following features are related to the vector field \eqref{xi-campo}.

\noindent 
$\bigcdot$
The multivalued
additively automorphic 
function associated to $X_\xi$ is 

\centerline{
$\Psi_{X_\xi} (z) = 
{\displaystyle \int_{z_0}^z }
\dfrac{d\zeta}{ \xi(\zeta)}:
\CC \backslash \{ \xi(z)=0 \}
\longrightarrow \CC_t.$
}

\noindent 
$\bigcdot$ 
The real singular foliation of $\Re{ X_\xi }$
has a symmetry of reflection 
with
respect to the critical line $\{\Re{z}=1/2 \}$.
In particular, it implies that the simple zeros 
$\{ \frac{1}{2} + i \gamma_n \}$
of $X_\xi$ are isochronous centers.

\noindent 
$\bigcdot$
In \cite{Broughan-Barnett}, it is proved that
$\log{T_n} \geq \frac{\pi}{4} \gamma_n + O(\log{\gamma_n})$ for $n\in\NN$,
where $T_n$ is the absolute value of the periods
$\tau_{n}=2\pi i /\xi' \big( \frac{1}{2} +i \gamma_{n} \big)$
of the $n$--th isochronous center 
$\frac{1}{2} + i \gamma_{n}$ along the critical line.
This implies that $T_n$ is strictly increasing.

\noindent 
$\bigcdot$
In \cite{Broughan}, it 
is proved that there exists an infinite number of 
incomplete trajectories $\Gamma_j$, for $j\in\ZZ$, 
with $\alpha$ and $\omega$--limits at $\pm \infty$ 
that do not contain
any singularities of $X_\xi(z)$
(crossing separatrices in their terminology).
Moreover, these incomplete trajectories separate the zeros
on the critical line, \emph{i.e.} $\Gamma_{j-1}$ and $\Gamma_j$ 
are the boundaries of an unbounded band $B_j$.

\noindent
Since it is unknown whether all the zeros on the critical line are simple (centers), then 
the band containing the $n$--th isochronous center $\frac{1}{2} + i \gamma_n$ 
along the critical line is $B_{j(n)}$.
Note that there might be other zeros (not on the critical line) inside 
each band $B_j$.
However, it is well known that if there are zeros not on the critical line, they must lie inside
the critical strip: 
a vertical strip of width 1 centered at the critical line.

\noindent  
As is expected, 
we show that 
$X_\xi$ can not be as simple as a pullback of 
a periodic vector field, 
compare with theorem 6.1 of \cite{Broughan}.

Since $\Psi_{X_\xi}$ is a multivalued additively 
automorphic meromorphic function on $\CC_z$, 
we proceed 
to construct a fundamental domain $\Lambda$ 
as in \S\ref{caso-multivaluado-para-Psi}.1.

\noindent
Consider first a closed Jordan path $\widehat{\Gamma}$ 
that contains $\mathcal{Z}_{R}\cup \{\infty \}$
and the vertical segment 
$[\frac{1}{2}-i \gamma_1,\frac{1}{2}+i \gamma_1]$.
Now let $\Gamma=\widehat{\Gamma}\backslash[\frac{1}{2}-i \gamma_1,\frac{1}{2}+i \gamma_1]$ and the
fundamental domain for $\Psi_{X_\xi}$ be
$\Lambda=(\CW_z\backslash\Gamma)\cup\Gamma_+$.

\begin{proposition}\hfill
\label{prop:Xi-Riemann}
\begin{enumerate}[label=\arabic*),leftmargin=*]

\item 
The Riemann $\xi$--vector field  \eqref{xi-campo} 
is not holomorphically equivalent to a
pullback of a periodic vector field $Y$ with a finite 
number of distinct residues.

\item
Let $\Lambda$ be the fundamental domain described above.
The single--valued function $\Psi_{X_\xi,\,\Lambda}$ 
has: 
\begin{enumerate}[label=\alph*),leftmargin=*]
\item an infinite number of $\star$--trascendental singularities over $\infty$ corresponding to the
zeros with nonzero residue in the critical strip,

\item 
two logarithmic singularities $U_{a_{\alpha_\pm}}$ over the 
finite asymptotic values $a_{\alpha_\pm}$,

\item
two hyperbolic tracts: the left and right hand planes, 
$\{ \Re{z}<0 \}$ and $\{ \Re{z}>1 \}$.
\end{enumerate}

\end{enumerate}
\end{proposition}

\begin{proof}
For the first statement,
recall the fact that the residues of a 
vector field at its zeros 
are holomorphic invariants. 
However 
$X_\xi(z)$ has an infinite number of \emph{distinct} periods.

For the second statement, 
let $\mathcal{Z}_{R}$ denote
the zeros of $\xi(z)$ that determine a nonzero residue 
of the 1--form of time $dz/\xi(z)$,
in particular the simple zeros 
$\{ \frac{1}{2}\pm i\gamma_n \}$ 
of $\xi(z)$ are contained in $\mathcal{Z}_{R}$.
Each of them is associated to a $\star$--transcendental singularity over $\infty$.

\noindent
On the other hand, 
the function $\xi(z)$ is entire, 
thus $\Psi_{X_\xi}$ does not have
any finite critical values.
Moreover, since $\Psi_{X_\xi ,\, \Lambda}$ 
is single--valued on $\Lambda$, 
and there are only two homotopy classes of paths approaching $\infty\in\CW_z$,
there are two finite asymptotic values for 
$\Psi_{X_\xi ,\, \Lambda}$
as follows:
let 
$\alpha_\pm( {\tt t} )\subset
\CC_z\backslash\mathcal{Z}_{R}$
denote a simple path starting at 
$z_{\tt o}$ and ending at $\pm\infty\in\CW_z$ tangent to the 
real axis. 
The two finite asymptotic values are:

\centerline{
$a_{\alpha_\pm}\doteq \lim_{ {\tt t} \to\infty} 
{\displaystyle  \int_{\alpha_\pm( {\tt t})} } 
\dfrac{d\zeta }{ \xi(\zeta)}
\simeq \{13.0074, -10.9997\} \subset\CC_t.$
}

\noindent
Thus we have two essential transcendental singularities $U_{a_{\alpha_\pm}}$ over the two finite 
asymptotic values $a_{\alpha_\pm}$.
Moroever they are logarithmic since their 
asymptotic values are isolated.
Their neighbourhoods $U_{a_{\alpha_\pm}}(\rho)$ are 
hyperbolic tracts. 
Note that outside of the critical strip $\{ z\in\CC_z\ \vert\ 0<\Re{z}<1 \}$ 
there are no zeros of $X_\xi(z)$, hence the hyperbolic tracts are as stated.
\end{proof}

%%%%%%%%%%%%%%%%%%%%%%%%%%%%%%%%%%%%
\section{Future work}
\label{Future-work-subseccion}
The use of vector fields $X$, in particular their phase portrait, allows us to observe 
the following new phenomena,
even for single--valued functions $\Psi_X$.

\smallskip
\noindent $\bigcdot$ 
In Example \ref{Ejemplo-ExpSin},
the real line is not an asymptotic path thus there is no
transcendental singularity associated to the real line, 
however any other path arriving to 
$\infty\in\CW_z$ corresponds to a logarithmic 
singularity of $\Psi_{X}^{-1}$.
Thus 
\emph{at $\infty\in\CW_z$ there are an infinite number 
of logarithmic singularities,
and two rays $\RR^\pm$
that do not correspond to transcendental singularities 
of $\Psi_{X}^{-1}$.}
This phenomena is not captured by Definition 
\ref{eremenko1}.
As an extreme situation, 
Example \ref{sing-campo-no-implica-sing-psi} 
shows that there are singularities of 
vector fields $X$ that do not allow any singularities of the inverse $\Psi_X^{-1}$.
A further characterization of these singularities might address this limitation.

\smallskip
\noindent $\bigcdot$
The extension of 
Theorem \ref{familias-P-r} 
to the case 
$\Psi_X(z)= f(g(z))$,
for a pair of entire functions $f$, $g$
with 
noncommensurable periods, remains open.
This kind of factorization technique is useful to 
study families of transcendental 
functions, 
see for instance \cite{Steinmetz-Fact}.

\smallskip 
\noindent $\bigcdot$  
A systematic study of non separate
singularities of $\Psi_X^{-1}$ is left for future work.
Particularly interesting is the case when the cardinality of 
$\mathbb{E}\cup\mathcal{Z}_{R}$ 
is at least 3: 
there is an
obvious relationship with Fuchsian groups and with the classical results of Plessner 
for singularities on the boundary of the disk 
(see \cite{Pommerenke} \S 6.4).

\smallskip
\noindent $\bigcdot$
Of course the complete study of Riemann 
$\xi$--function, from the perspective of vector fields, 
warrants further work. 
Two obvious 
perspectives present themselves: 
(a) to consider $X_{\xi}(z)=\xi(z)\del{}{z}$, 
or (b) to consider $\Psi(z)=\xi(z)$ and its 
corresponding vector field 
$\big(1/\Psi'(z)  \big)\del{}{z}$.

\smallskip
\noindent $\bigcdot$
If $\Phi$ is any
multivalued singular complex 
analytic function, then
the extension of Iversen's theory, 
(developed in \S\ref{caso-multivaluado-para-Psi}, 
mainly Definition \ref{Nueva-def-vecindades} and its consequences),
can be carried through
so long as one can find a fundamental 
domain $\Lambda$.
({\it i.e.} a maximal simply connected univalence region for 
$\Phi$).
However, a priori the properties of the singularities
of the inverse $\Phi^{-1}$ can depend on the choice of 
$\Lambda$.
A first step in this direction would entail examining the case where 
the multivalued singular complex 
analytic function $\Phi$ has an 
automorphy factor, which is not just a translation 
(as is the case for additively automorphic functions).

\end{document}